\newif\ifdraftmode
\newcommand\thisShorttitle{Convergence bounds for local least squares approximation}
\newcommand\thisTitle{\thisShorttitle}
\newcommand\thisSubject{math.NA, stat.ML}
\newcommand\thisAuthor{Philipp Trunschke}
\newcommand\thisKeywords{least squares approximation, sample complexity, tensor networks, manifolds, positive reach}
\addspace\UrlFont{\mkbibbrackets{\thefield{eprintclass}}}}}}
\addspace\UrlFont{\mkbibbrackets{\thefield{eprintclass}}}}}}
\def\mscclassname{{\bfseries \emph{AMS subject classifications.}}}%
\def\mscclasses#1{\par\addvspace\medskipamount{\rightskip=0pt plus1cm
\def\and{\ifhmode\unskip\nobreak\fi\ $\cdot$
}\noindent\mscclassname\enspace\ignorespaces#1\par}}
\def\codename{{\bfseries \emph{Code.}}}%
\def\code#1{\par\addvspace\medskipamount{\rightskip=0pt plus1cm\noindent\codename\enspace\ignorespaces\url{#1}\par}}
    \definecolor{amaranth}{rgb}{0.9, 0.17, 0.31}%
    \definecolor{americanrose}{rgb}{1.0, 0.01, 0.24}
    \colorlet{alertcolor}{amaranth}
    \colorlet{notecolor}{MidnightBlue}
    \newcommand{\todo}[1]{\marginpar{\tiny\color{alertcolor}#1}\@latex@warning{#1}\xspace}
    \newcommand{\note}[1]{\marginpar{\tiny\color{notecolor}#1}}
    \definecolor{bleudefrance}{rgb}{0.19, 0.55, 0.91}
    \newcommand{\numRevisions}{2}
    \newcommand{\revision}[2][0]{%
    \begingroup%
        \newcount\colorRatio%
        \colorRatio=\numexpr(100*(#1+1))/\numRevisions\relax%
        \colorlet{revisionColor}{bleudefrance!\the\colorRatio!black}\color{revisionColor}#2%
    \endgroup}
    \newcommand{\note}[1]{}
\tikzset{core/.style={inner sep=0pt}}
\tikzset{contraction/.style={line width=0.75}}
\tikzset{contractionDots/.style={contraction, dotted}}
\colorlet{dimgray}{black!35!white}
\colorlet{lightgray}{dimgray!35!white}
\declaretheoremstyle[bodyfont=\itshape, mdframed={backgroundcolor=lightgray, linecolor=dimgray, linewidth=0.75pt, innertopmargin=1.5ex}]{claim}
\declaretheorem[style=claim]{theorem}
\declaretheorem[style=claim, numberlike=theorem]{lemma}
\declaretheorem[style=claim, numberlike=theorem]{proposition}
\declaretheorem[style=claim, numberlike=theorem]{corollary}
\declaretheoremstyle[mdframed={backgroundcolor=lightgray, linecolor=dimgray, linewidth=0.75pt, innertopmargin=1.5ex}]{definition}
\declaretheorem[style=definition, numberlike=theorem]{definition}
\declaretheoremstyle[bodyfont=\itshape, mdframed={backgroundcolor=white, linecolor=dimgray, linewidth=0.75pt, innertopmargin=1.5ex}]{remark}
\declaretheorem[style=remark, numberlike=theorem]{remark}
\declaretheoremstyle[mdframed={backgroundcolor=white, linecolor=dimgray, linewidth=0.75pt, innertopmargin=1.5ex}]{example}
\declaretheorem[style=example, numberlike=theorem]{example}
\newcommand{\indep}{\perp\kern-0.6em\perp}
\DeclareMathOperator{\supp}{supp}
\DeclareMathOperator{\rch}{rch}
\DeclareMathOperator{\cl}{cl}
\newcommand*{\dd}{\ensuremath{\mathrm{d}}}
\newcommand*{\dx}[1][x]{\ensuremath{\,\dd{#1}}}
\newcommand*{\mbb}[1]{\mathbb{#1}}
\newcommand*{\mcal}[1]{\mathcal{#1}}
\newcommand*{\mfrak}[1]{\mathfrak{#1}}
\let\inf\relax  
\DeclareMathOperator*{\inf}{inf\vphantom{\sup}}
\DeclareMathOperator*{\argmin}{arg\,min}
\DeclareMathOperator*{\esssup}{ess\,sup}
\DeclarePairedDelimiter{\pars}{\ensuremath{(}}{\ensuremath{)}}
\DeclarePairedDelimiter{\bracs}{\ensuremath{[}}{\ensuremath{]}}
\DeclarePairedDelimiter{\braces}{\ensuremath{\{}}{\ensuremath{\}}}
\DeclarePairedDelimiter{\inner}{\langle}{\rangle}
\DeclarePairedDelimiter{\norm}{\|}{\|}
\DeclarePairedDelimiter{\abs}{\lvert}{\rvert}
\newcommand{\opnorm}{\@ifstar\@opnorms\@opnorm}
\newcommand{\@opnorms}[1]{%
  \left|\mkern-1.5mu\left|\mkern-1.5mu\left|
   #1
  \right|\mkern-1.5mu\right|\mkern-1.5mu\right|
}
\newcommand{\@opnorm}[2][]{%
  \mathopen{#1|\mkern-1.5mu#1|\mkern-1.5mu#1|}
  #2
  \mathclose{#1|\mkern-1.5mu#1|\mkern-1.5mu#1|}
}
\mathchardef\mhyphen="2D
\let\oldbullet\bullet
\newlength{\raisebulletlen}
\renewcommand\bullet{\raisebox{\raisebulletlen}{\,\tiny$\oldbullet$}\,}
\renewcommand{\vec}{\boldsymbol}
\newcommand*{\rom}[1]{\expandafter\@slowromancap\romannumeral #1@}
\DeclarePairedDelimiterX\Set[1]\{\}{%
  #1%
}
\def\multiset#1#2{\ensuremath{\left(\kern-.3em\left(\genfrac{}{}{0pt}{}{#1}{#2}\right)\kern-.3em\right)}}
\newlist{thmenum}{enumerate}{1}  
\setlist[thmenum]{label=\thethmenumi., ref=\thetheorem.\thethmenumi}  
\title{\thisTitle} 
\date{}
\author{
\href{https://orcid.org/0000-0002-2995-126X}{\includegraphics[scale=0.06]{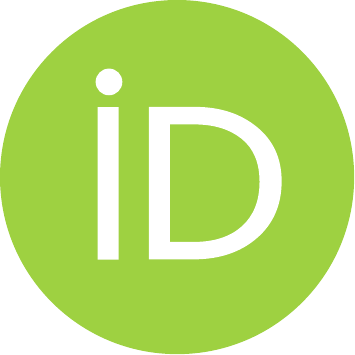}\hspace{1mm}\textcolor{black}{Philipp Trunschke\thanks{Corresponding Author. Website: \url{https://ptrunschke.github.io/}}}} \\
    Centrale Nantes \\
    Nantes Universit\'e \\
    Laboratoire de Math\'ematiques Jean Leray \\
    CNRS UMR 6629 \\
    France \\
    \href{mailto:philipp.trunschke@univ-nantes.fr}{\texttt{philipp.trunschke@univ-nantes.fr}} \\
}
\begin{document}
\maketitle

\begin{abstract}
    We consider the problem of approximating a function in a general nonlinear subset of $L^2$, when only a weighted Monte Carlo estimate of the $L^2$-norm can be computed.
    Of particular interest in this setting is the concept of sample complexity, the number of sample points that are necessary to achieve a prescribed error with high probability.
    Reasonable worst-case bounds for this quantity exist only for particular model classes, like linear spaces or sets of sparse vectors.
    For more general sets, like tensor networks or neural networks, the currently existing bounds are very pessimistic.
    
    By restricting the model class to a neighbourhood of the best approximation, we can derive improved worst-case bounds for the sample complexity.
    When the considered neighbourhood is a manifold with positive local reach, its sample complexity can be estimated by means of the sample complexities of the tangent and normal spaces and the manifold's curvature.
\end{abstract}

\keywords{least squares approximation \and sample complexity \and tensor networks \and manifolds \and positive reach}
\vspace{-1em}
\mscclasses{15A69 \and 41A30 \and 62J02 \and 65Y20 \and 68Q25}

\section{Introduction}

We consider the task of estimating an unknown function from noiseless observations.
For this problem to be well-posed we assume that the sought function can be well approximated in a given nonlinear model class.
Of particular interest in this setting is how well a sample-based estimator can approximate the sought function.
In investigating this question, many papers rely on a \emph{restricted isometry property} (RIP) or a RIP-like condition.
The RIP asserts, that the sample-based estimate of the approximation error is equivalent to the approximation error for all elements of the model class.
Without this equivalence
it is easy to conceive circumstances under which a minimizer of the empirical approximation error is arbitrarily far away from the real best approximation.
Even though the majority of compressed sensing results do not bound the probability of the RIP in our general setting,
specialised bounds exist
for linear spaces (see e.g.~\cite{cohen_2017_least-squares}) and sparse-grid spaces (see e.g.~\cite{bohn_2018_sparse_grid}), for sparse vectors (see e.g.~\cite{candes_2006_stable_recovery,rauhut_2016_weighted_l1}), low-rank matrices and tensors (see e.g.~\cite{recht_2010_nuclear_norm_minimization,rauhut_2017_iht}), as well as for neural networks (see~\cite{goessmann2020restricted}) and, only recently, for generic, non-linear model classes~\cite{eigel_2022_convergence}.

The present manuscript is not primarily concerned with providing bounds for the probability of the RIP for certain model classes.
Instead, it provides rigorous proofs for multiple, intuitively true, properties of the RIP for a large family of model classes.
The results in this paper are valid for arbitrary non-linear model classes and extends the work~\cite{eigel_2022_convergence} in three significant ways:
\begin{itemize}
    \item In~\cite{eigel_2022_convergence} the authors use the concept of a \emph{variation constant} to provide bounds for the probability of the RIP.
    However, concrete values for this constant are computed only for three special cases.
    In Theorem~\ref{thm:K_properties} we provide calculus rules that allow for the computation and estimation of the variation constant for arbitrary model classes.
    \item One model class that is considered in~\cite{eigel_2022_convergence} is that of tensor networks.
    But since the variation constant of this model class suffers from the curse of dimensionality, the work~\cite{eigel_2022_convergence} was not able to explain the absence of the curse in many practical applications~\cite{ESTW19,kraemer2015ttpde,Kressner2014}.
    The authors assumed that this was due to their estimate and conjectured the variation constant could be bounded more tightly.
    We prove this conjecture to be wrong in Theorem~\ref{thm:K_TN}.
    \item Theorems~\ref{thm:Kloc_upper} and~\ref{thm:KU_manifold_limit} provide tight upper and lower bounds for the variation constant when the model class is restricted to a small neighbourhood of the best approximation.
    These bounds do not suffer from the curse of dimensionality and thereby provide an intuition for why properly initialised gradient descent methods work so well in practice.
    We see these bounds as a first step in the analysis of gradient descent methods in the context of the RIP.
\end{itemize}

Although applicable to a wide range of model classes, our deliberations are motivated by model classes of tensor networks.
These can be regarded as generalisations of low-rank matrices as well as a subclass of neural networks with identity activation and product pooling.
As such, tensor networks present one of the simplest non-trivial, non-linear model classes.
For a more comprehensive discussion the reader is referred to the survey article~\cite{Grasedyck_2013_survey} and the monograph~\cite{hackbusch_2012_book}.

In addition to this simplicity argument, there exists a practical reason to investigate tensor networks:
They are a commonly used tool in the numerics of high-dimensional parametric PDEs~\cite{eigel2018vmc0,ESTW19}, in uncertainty quantification~\cite{haberstich_2020_thesis,wolf_sebastian_low_2019}, in dynamical systems recovery~\cite{gels_multidimensional_2019,goette_2020_dynamical_laws} and recently even in computational finance~\cite{BEST21}.
But even though this model class exhibits exceptional performance in many practical applications, a theoretical foundation for this observation is still lacking.
This stands in sharp contrast to other recent results that utilise weighted sparsity~\cite{bouchot_2015_CSPG} or precisely tailored linear spaces~\cite{cohen_2021_lognormal}.
This paper sets out to rectify this.

\paragraph{Setting}
Let $\rho$ be a probability measure on some set $Y$ and define the norm $\norm{\bullet} := \norm{\bullet}_{L^2\pars{Y,\rho}}$.
Given point-evaluations $\braces{u\pars{y^i}}_{i=1}^n$ of an unknown function $u\in L^2\pars{Y,\rho}$ at points $\braces{y^i}_{i=1}^n\subseteq Y$, we want to find a (not necessarily unique) best approximation
\begin{equation}
\label{vmc-3:eq:min}
    u_{\mcal{M}} \in \argmin_{v\in \mcal{M}}\ \norm{u - v}
\end{equation}
of $u$ in a \emph{model class} $\mcal{M}\subseteq L^2\pars{Y,\rho}$.
In general, however, $u_{\mcal{M}}$ is not computable and a popular remedy is to estimate
\begin{equation}
\label{vmc-3:eq:min_emp}
    \norm{v} \approx \norm{v}_{\boldsymbol{y}} := \pars*{\frac{1}{n}\sum_{i=1}^n w\pars{y^i} \abs{v\pars{y^i}}^2}^{1/2}
    \quad\text{and}\quad
    u_{\mcal{M}} \approx u_{\mcal{M},\boldsymbol{y}} \in \argmin_{v\in\mcal{M}}\ \norm{u - v}_{\boldsymbol{y}} ,
\end{equation}
where we assume that $w$ is a \emph{weight function}, satisfying $w > 0$ almost everywhere and $\int_Y w^{-1} \dd\rho = 1$
and where every sample point $y^i$ is drawn independently from the distribution $w^{-1}\rho$.
By considering the model class $L\mcal{M}$ for a linear operator $L$, even the minimisation of residuals of the form $\norm{u-Lv}$ can be represented in this setting.
The problems that can be phrased this way are hence so ubiquitous that we refrain from listing them.

To ensure that the optimisation problem~\eqref{vmc-3:eq:min_emp} is well-defined, we define the $w$-adapted normed linear space
\begin{equation}
    \mcal{V} := \mcal{V}_{w,\infty} := \braces{v\in L^\infty\pars{Y,\rho} : \norm{v}_{w,\infty} < \infty}
    \quad\text{with}\quad
    \norm{v}_{w,\infty} := \esssup_{y\in Y} \sqrt{w\pars{y}} \abs{v\pars{y}}
\end{equation}
and require that $u\in\mcal{V}$, $\mcal{M}\subseteq\mcal{V}$ and that point-evaluations $v\pars{y}$ exist for every function $v\in\mcal{M}$ and all $y\in Y$.
In this setting, we can prove that the empirical best approximation error $\norm{u - u_{\mcal{M},\boldsymbol{y}}}$ is, with high probability, bounded by the best approximation error $\norm{u - u_{\mcal{M}}}_{w,\infty}$.
To do this, we require the \emph{restricted isometry property} (RIP)
\begin{equation}\label{vmc-3:eq:rip}
    \operatorname{RIP}_A\pars{\delta} :\Leftrightarrow \pars{1-\delta}\norm{u}^2 \le \norm{u}_{\boldsymbol{y}}^2 \le \pars{1+\delta}\norm{u}^2 \qquad \forall u\in A
\end{equation}
to be satisfied for the set $A=\braces{u_{\mcal{M}}} - \mcal{M}$ and some $\delta \in \pars{0,1}$.

\paragraph{Structure}
The remainder of the paper is organised as follows.

Section~\ref{sec:convergence_bounds} first recalls a basic bound for the empirical best approximation error $\norm{u-u_{\mcal{M},\boldsymbol{y}}}$.
Since this bound requires the RIP to hold, the subsequent Theorem~\ref{thm:P_RIP} bounds the probability of this event.
The resulting probability depends on the reciprocal of a generalised Christoffel function, termed the \emph{variation function}.
Rules for the computation of this function are derived and applied to model classes of tensor networks.
In this way it is shown that the derived worst-case estimate for the sample complexity of tensor networks behaves asymptotically the same as the sample complexity estimate for the full tensor space in which they are contained.
This agrees with observations from matrix and tensor completion where a low-rank matrix or tensor has to satisfy an additional \emph{incoherence condition} to guarantee a reduced sample complexity~\cite{Candes2010convexRelaxationMatrixCompletion,yuan2015tensor_completion}.
Since the model class may be very large and contain ``highly coherent'' elements which can not be recovered with a low sample complexity, we consider the restriction of the model class to a neighbourhood of the best approximation $u_{\mcal{M}}$.

Section~\ref{sec:local_variation_constant} derives upper and lower bounds for the variation function of neighbourhoods of $u_{\mcal{M}}$ that form manifolds with positive local reach.
These bounds relate the variation function of the neighbourhood to those of the tangent and normal spaces at $u_{\mcal{M}}$ as well as the neighbourhood's curvature.

Section~\ref{sec:discussion} concludes this work by discussing the results and possible ways forward.

\paragraph{Notation}
We denote the set of integers from $1$ through $d$ by $\bracs{d}$ and 
define $\operatorname{supp}\pars{v} := \braces{j\in\bracs{d} : v_j\ne0}$ for any $v\in\mbb{R}^d$.

For a set $X$, $\mfrak{P}\pars{X}$ denotes the set of all subsets of $X$.
If $\pars{X,p}$ is a metric space and $Y\subseteq X$, then $\cl\pars{Y}$ denotes the closure of $Y$ in $X$ and $\mfrak{C}\pars{X}$ denotes the set of all non-empty, compact subsets of $X$.
$S\pars{x,r}\subseteq X$ denotes the sphere and $B\pars{x,r}\subseteq X$ denotes the  open ball of radius $r>0$ and with center $x\in X$.
Since the metric space $X$ should always be clear from context, we do not include it in the notation for $S\pars{x,r}$ and $B\pars{x,r}$.
If $X$ is subset of a linear space, the notation $\inner{X}$ denotes the linear span of $X$.
Finally, if $X$ is a $\mcal{C}^1$ submanifold of Euclidean space and $x\in X$ then $\mbb{T}_x X$ shall denote the tangent space of $X$ at $x$ and $\mbb{T}_x^\perp X$ shall denote its orthogonal complement in $\inner{X}$.

In Theorem~\ref{thm:K_properties} we require the concept of a continuous function that operates on sets.
The relevant topologies are induced by the following two metrics.
\begin{definition}[Hausdorff distance]
    Let $\pars{M, d}$ be a metric space.
    The function $d_{\mathrm{H}} : \mfrak{P}\pars{M}^2 \to [0,\infty)$
    \begin{equation}
        d_{\mathrm{H}}\pars{X,Y} := \max\braces{\sup_{x\in X}\inf_{y\in Y} d\pars{x,y}, \sup_{y\in Y}\inf_{x\in X} d\pars{x,y}}
    \end{equation}
    defines a metric on $\mfrak{C}\pars{M}$ and a pseudometric on $\mfrak{P}\pars{M}$.
\end{definition}

Note that this is not an appropriate metric when the sets $X$ and $Y$ are cones, since in this case
\begin{equation}
    d_{\mathrm{H}}\pars{X,Y} = \begin{cases}
                                   0 & \cl\pars{X} = \cl\pars{Y} \\
                                   \infty & \text{otherwise}
                               \end{cases} .
\end{equation}
In the following we define $\operatorname{Cone}\pars{X} := \braces{\lambda x : \lambda >0, x\in X}$ and denote by $\operatorname{Cone}\pars{\mfrak{P}\pars{M}}$ the set of all cones in $M$.
Since conic set are uniquely defined by their intersection with the unit sphere we can define a more suitable (pseudo-)metric by considering the Hausdorff distance between these intersections.
This metric is called the truncated Hausdorff distance~\cite{seeger_2010_Hausdorff_distance}.
\begin{definition}[truncated Hausdorff distance]
    The function $d_{\mathrm{tH}} : \operatorname{Cone}\pars{\mfrak{P}\pars{M}}^2 \to [0,\infty)$, defined by
    \begin{equation}
        d_{\mathrm{tH}}\pars{X,Y} := d_{\mathrm{H}}\pars{S\pars{0,1}\cap X, S\pars{0,1}\cap Y},
    \end{equation}
    is a pseudometric on $\operatorname{Cone}\pars{\mfrak{P}\pars{M}}$.
\end{definition}

\section{Worst case bounds}\label{sec:convergence_bounds}

Using the restricted isometry property, defined in equation~\eqref{vmc-3:eq:rip}, we can bound the error $\norm{u-u_{\mcal{M},\boldsymbol{y}}}$ by $\norm{u-u_{\mcal{M}}}_{\infty}$.

\begin{theorem}[Theorem~2.12 in~\cite{eigel_2022_convergence}]
\label{thm:empirical_projection_error}
    Let $u\in\mcal{V}$ and recall the definitions~\eqref{vmc-3:eq:min},~\eqref{vmc-3:eq:min_emp} and~\eqref{vmc-3:eq:rip}.
    %
    If $\operatorname{RIP}_{\braces{u_{\mcal{M}}}-\mcal{M}}\pars{\delta}$ holds, then
    \begin{equation*}
        \norm{u - u_{\mcal{M},\boldsymbol{y}}}
        \le \pars*{1+\tfrac{2}{\sqrt{1-\delta}}}\norm{u - u_{\mcal{M}}}_{w,\infty}\ .
    \end{equation*}
\end{theorem}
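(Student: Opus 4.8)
The plan is to combine two triangle inequalities with the optimality of $u_{\mcal{M},\boldsymbol{y}}$ and the lower bound furnished by the RIP. Write $v^\star := u_{\mcal{M}}$ and $\hat v := u_{\mcal{M},\boldsymbol{y}}$; both lie in $\mcal{M}$, so the difference $v^\star - \hat v$ belongs to $\braces{u_{\mcal{M}}} - \mcal{M}$, and the hypothesis $\operatorname{RIP}_{\braces{u_{\mcal{M}}}-\mcal{M}}\pars{\delta}$ applies to it. Its lower half gives $\norm{v^\star - \hat v}^2 \le (1-\delta)^{-1}\norm{v^\star - \hat v}_{\boldsymbol{y}}^2$, reducing everything to a bound on the empirical seminorm of $v^\star - \hat v$.

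Next I would control $\norm{v^\star - \hat v}_{\boldsymbol{y}}$. Since $\norm{\bullet}_{\boldsymbol{y}}$ is a seminorm on $\mcal{V}$ for every fixed sample, the triangle inequality gives $\norm{v^\star - \hat v}_{\boldsymbol{y}} \le \norm{u - v^\star}_{\boldsymbol{y}} + \norm{u - \hat v}_{\boldsymbol{y}}$. Because $\hat v$ minimises $v \mapsto \norm{u-v}_{\boldsymbol{y}}$ over $\mcal{M}$ and $v^\star \in \mcal{M}$, the second term is at most the first, whence $\norm{v^\star - \hat v}_{\boldsymbol{y}} \le 2\norm{u - v^\star}_{\boldsymbol{y}}$.

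The remaining step passes from the empirical norm of $u - v^\star$ to its weighted sup norm. For $\rho$-almost every $y$ one has $w\pars{y}\abs{u\pars{y}-v^\star\pars{y}}^2 \le \norm{u-v^\star}_{w,\infty}^2$ by definition of $\norm{\bullet}_{w,\infty}$; since each sample point is drawn from $w^{-1}\rho \ll \rho$, this inequality holds at $y^1,\dots,y^n$ almost surely, and averaging yields $\norm{u-v^\star}_{\boldsymbol{y}} \le \norm{u-v^\star}_{w,\infty}$. The analogous computation using $\int_Y w^{-1}\dd\rho = 1$ gives $\norm{u-v^\star} \le \norm{u-v^\star}_{w,\infty}$. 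Chaining these estimates through a final triangle inequality $\norm{u-\hat v} \le \norm{u - v^\star} + \norm{v^\star - \hat v}$ produces exactly the constant $1 + 2(1-\delta)^{-1/2}$.

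The argument is elementary; the only delicate point is the measure-theoretic one in the last paragraph — the bound $\norm{u-v^\star}_{\boldsymbol{y}} \le \norm{u-v^\star}_{w,\infty}$ is not deterministic for an arbitrary point configuration but holds with probability one, precisely because the sampling measure $w^{-1}\rho$ is absolutely continuous with respect to $\rho$. I would state this explicitly, so that the conclusion is understood to hold almost surely on the event that $\operatorname{RIP}_{\braces{u_{\mcal{M}}}-\mcal{M}}\pars{\delta}$ is satisfied.
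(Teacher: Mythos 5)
Your proof is correct and is exactly the standard argument for this result (the paper itself only cites Theorem~2.12 of the reference rather than reproving it): RIP lower bound on $u_{\mcal{M}}-u_{\mcal{M},\boldsymbol{y}}\in\braces{u_{\mcal{M}}}-\mcal{M}$, empirical triangle inequality plus optimality of $u_{\mcal{M},\boldsymbol{y}}$ to get the factor $2$, and the pointwise bound $w\abs{u-u_{\mcal{M}}}^2\le\norm{u-u_{\mcal{M}}}_{w,\infty}^2$ together with $\int_Y w^{-1}\dd\rho=1$ to pass to the $\norm{\bullet}_{w,\infty}$ norm. Your remark that the last step holds only almost surely (because $\norm{\bullet}_{w,\infty}$ is an essential supremum) is a fair and correctly handled subtlety.
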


As defined in equation~\eqref{vmc-3:eq:rip}, the RIP can be considered for any deterministic choice of $\boldsymbol{y}\in Y^n$.
But since we assume that $\boldsymbol{y}$ is a vector of i.i.d.\ random variables, the RIP is a random event and Theorem~\ref{thm:empirical_projection_error} bounds the error of the empirical best approximation $u_{\mcal{M},\boldsymbol{y}}$ under the condition that $\operatorname{RIP}_{\braces{u_{\mcal{M}}}-\mcal{M}}$ holds.
The probability of this event can be bounded by a standard concentration of measure argument.
To do this, we define the \emph{variation function} of a subset $A\subseteq\mcal{V}$ by
\begin{equation} \label{eq:variation_constant}
    \mfrak{K}_A\pars{y} := \sup_{a\in U\pars{A}} \abs{a\pars{y}}^2
    \quad\text{where}\quad
    U\pars{A} := \braces*{\tfrac{u}{\norm{u}} : u\in A\!\setminus\!\braces{0}} .
\end{equation}
With this, we can state the following bound on the probability of $\operatorname{RIP}_A\pars{\delta}$.
\begin{theorem}[Theorem~2.7 and Corollary~2.10 in~\cite{eigel_2022_convergence}] \label{thm:P_RIP}
    For any $A\subseteq \mcal{V}$ and $\delta>0$, there exists a constant $C>0$, independent of $n$, such that
    \begin{equation*}
        \mbb{P}\bracs{\operatorname{RIP}_A\pars{\delta}} \ge 1 - C \exp\pars*{-\tfrac{n}{2}\pars{\tfrac{\delta}{\norm{\mfrak{K}_A}_{w,\infty}}}^2} .
    \end{equation*}
    If $\dim\pars{\inner{A}}<\infty$, then $C$ depends only polynomially on $\delta$ and $\norm{\mfrak{K}_A}_{w,\infty}^{-1}$.
\end{theorem}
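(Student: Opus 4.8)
The plan is to rephrase $\operatorname{RIP}_A\pars{\delta}$ as a one-sided deviation estimate for a single random variable and then apply a concentration inequality. Set
\[
    \xi_i\pars{a} := w\pars{y^i}\abs{a\pars{y^i}}^2
    \qquad\text{and}\qquad
    Z := \sup_{a\in U\pars{A}}\abs*{\norm{a}_{\boldsymbol{y}}^2 - 1} ,
\]
so that $\norm{a}_{\boldsymbol{y}}^2 = \tfrac1n\sum_{i=1}^n\xi_i\pars{a}$ and, since every $a\in U\pars{A}$ satisfies $\norm{a}=1$, the event $\operatorname{RIP}_A\pars{\delta}$ is precisely $\braces{Z\le\delta}$. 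The weight normalisation $\int_Y w^{-1}\dd\rho = 1$ gives $\mbb{E}\bracs{\xi_i\pars{a}} = \int_Y\abs{a}^2\dd\rho = 1$, so for each fixed $a$ the $\xi_i\pars{a}$ are i.i.d.\ with mean $1$ and $Z$ is the supremum of a centred empirical process indexed by $U\pars{A}$. Two further estimates, both reading off the definitions of $\mfrak{K}_A$ and of $\norm{\bullet}_{w,\infty}$, make the variation function enter: the almost sure range bound $0\le\xi_i\pars{a}\le\norm{\mfrak{K}_A}_{w,\infty}=:M$ for every $a\in U\pars{A}$, and the consequent variance bound $\operatorname{Var}\pars{\xi_i\pars{a}}\le\mbb{E}\bracs{\xi_i\pars{a}^2}=\int_Y w\abs{a}^4\dd\rho\le M\norm{a}^2 = M$.

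For the first assertion I would invoke the bounded-differences (McDiarmid) inequality. Replacing a single sample point $y^i$ changes $\norm{a}_{\boldsymbol{y}}^2$ by at most $M/n$, uniformly in $a\in U\pars{A}$, hence changes $Z$ by at most $M/n$; McDiarmid's inequality therefore gives $\mbb{P}\bracs{Z\ge\mbb{E}\bracs{Z}+t}\le\exp\pars*{-2nt^2/M^2}$ for all $t>0$. It then remains to show that $\mbb{E}\bracs{Z}\le\delta/2$ once $n$ exceeds some threshold $n_0$ depending on $A$ and $\delta$ but not on $n$, i.e.\ a uniform law of large numbers for the $M$-bounded class $\braces{w\abs{a}^2 : a\in U\pars{A}}$. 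For such $n$, taking $t=\delta/2$ yields $\mbb{P}\bracs{Z>\delta}\le\exp\pars*{-\tfrac n2\pars{\delta/M}^2}$, which is the claim with $C=1$; for the finitely many $n\le n_0$ the trivial bound $\mbb{P}\bracs{Z>\delta}\le1$ is absorbed into a sufficiently large constant $C$, which is legitimate precisely because $n_0$ is independent of $n$.

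For the refinement when $m:=\dim\pars{\inner{A}}<\infty$ one replaces the crude control of $\mbb{E}\bracs{Z}$ by an explicit covering argument, which additionally delivers an $n$-uniform $C$ with the asserted polynomial dependence. Fixing an $L^2\pars{Y,\rho}$-orthonormal basis of $\inner{A}$ identifies $U\pars{A}$ with a subset of the unit sphere of $\mbb{R}^m$, which carries a $\gamma$-net $\braces{a_1,\dots,a_N}$ with $N\le\pars{1+2/\gamma}^m$. Applying Bernstein's inequality at each $a_k$ with the range and variance bounds above, taking a union bound over the $N$ points, and transferring the estimate from the net to all of $U\pars{A}$ through $\abs{\norm{a}_{\boldsymbol{y}}^2-\norm{a_k}_{\boldsymbol{y}}^2}\le\norm{a-a_k}_{\boldsymbol{y}}\pars{\norm{a}_{\boldsymbol{y}}+\norm{a_k}_{\boldsymbol{y}}}$ followed by the usual bootstrap — solving the resulting inequality for $\sup_{a\in U\pars{A}}\norm{a}_{\boldsymbol{y}}^2$ — gives, after choosing $\gamma$ of order $\delta/M$, a bound of exactly the claimed form in which $C$ is polynomial in $\delta^{-1}$ and $M$ (with degree governed by $m$). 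I expect this last step — the uniform, $n$-independent control of the supremum over $U\pars{A}$ together with the book-keeping of the net and the bootstrap — to be the main technical obstacle; the reformulation and the concentration inequalities themselves are routine. These are the ingredients behind Theorem~2.7 and Corollary~2.10 of~\cite{eigel_2022_convergence}.
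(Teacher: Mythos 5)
Your reduction of $\operatorname{RIP}_A\pars{\delta}$ to the event $\braces{Z\le\delta}$, the range bound $0\le w\pars{y^i}\abs{a\pars{y^i}}^2\le M:=\norm{\mfrak{K}_A}_{w,\infty}$ for $a\in U\pars{A}$, and the exponent you extract from the concentration step all match the intended argument: the proof in the cited reference applies Hoeffding's inequality (range $\bracs{0,M}$) at each point of a net for $U\pars{A}$, takes a union bound, and transfers the estimate to all of $U\pars{A}$ by a perturbation argument, so that $C$ is essentially a covering number of $U\pars{A}$ --- finite and polynomial in $\delta^{-1}$ and $M$ when $\dim\pars{\inner{A}}<\infty$. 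Your treatment of the finite-dimensional case is therefore the same proof in all essentials, up to one detail: Bernstein with variance proxy $M$ yields an exponent of order $n\delta^2/\pars{2M+2M\delta/3}$ rather than $\tfrac n2\pars{\delta/M}^2$, so you should use Hoeffding at the net points; the factor $4$ of slack in Hoeffding's constant is exactly what absorbs the halving of $\delta$ in the net-to-set transfer.

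The genuine gap is in your first assertion. The step ``$\mbb{E}\bracs{Z}\le\delta/2$ once $n\ge n_0$'' is a uniform law of large numbers for the class $\braces{w\abs{a}^2 : a\in U\pars{A}}$, and uniform boundedness by $M$ does not imply it. Take $Y=\bracs{0,1}$, $\rho$ the Lebesgue measure, $w\equiv1$ and $A=\braces{\sqrt2\,\bbone_S : S\text{ measurable},\ \abs{S}=1/2}$. Then $U\pars{A}=A$ and $\norm{\mfrak{K}_A}_{w,\infty}=2<\infty$, yet for any realisation of the sample one may choose $S$ containing all sample points, so $Z=1$ almost surely for every $n$; in particular $\mbb{E}\bracs{Z}$ never falls below $\delta/2$ for $\delta<2$ and $\operatorname{RIP}_A\pars{\delta}$ fails almost surely for every $\delta<1$. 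Your McDiarmid route would thus establish something strictly stronger than the theorem ($C=1$ for all sufficiently large $n$) which is false for general $A$: the deferred ULLN is not a routine technicality but carries the entire content of the statement, and it is precisely where the complexity of $U\pars{A}$ --- the covering number, i.e.\ the constant $C$ --- must enter. Any correct proof has to route that complexity into the multiplicative constant rather than into an asymptotically vanishing $\mbb{E}\bracs{Z}$, which is what the net-plus-Hoeffding argument of the cited reference does.
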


Combining Theorem~\ref{thm:empirical_projection_error} and Theorem~\ref{thm:P_RIP} allows us to bound the probability with which the best approximation $u_{\mcal{M}}$ of a function $u$ may be recovered exactly in a given model class $\mcal{M}$.
But they can also be used to provide worst-case bounds for deterministic algorithms.
If $\norm{\mfrak{K}_{\mcal{M}-\mcal{M}}}_{w,\infty} < \infty$, Theorem~\ref{thm:P_RIP} ensures that we can find $n\in\mbb{N}$ and $\boldsymbol{y}\in Y^n$ such that $\operatorname{RIP}_{\mcal{M}-\mcal{M}}\pars{\delta}$.
This implies $\operatorname{RIP}_{\braces{u}-\mcal{M}}\pars{\delta}$ is satisfied for any $u\in\mcal{M}$ and Theorem~\ref{thm:empirical_projection_error} guarantees that any $u\in\mcal{M}$ can be exactly recovered by~\eqref{vmc-3:eq:min_emp} using only $n$ function evaluations.

Since the proof of this theorem relies on Hoeffding's inequality, one might expect the resulting bound to be quite rough.
But one can indeed show~\cite{Montgomery-Smith_1990_Rademacher_sums}, that it is optimal with respect to $n$ and $d$, i.e.\ that there exists a linear space $A$ and a constant $c>0$ such that
\begin{align*}
    \mbb{P}\bracs{\neg\operatorname{RIP}_A\pars{\delta}}
    &\le \exp\pars{-\tfrac{1}{2} n \delta^2} \qquad\text{and}\\
    \mbb{P}\bracs{\operatorname{RIP}_A\pars{\delta}}
    &\ge \exp\pars{- c^3 n \delta^2} / c .
\end{align*}
Moreover, the best currently known bounds for linear spaces~\cite{cohen_2017_least-squares} and sparse vectors~\cite{rauhut_2016_weighted_l1} also depend on the norm $\norm{\mfrak{K}_A}_{w,\infty}$.

Theorem~\ref{thm:P_RIP} guarantees that the probability of $\operatorname{RIP}_A\pars{\delta}$ increases, when the value of $\norm{\mfrak{K}_A}_{w,\infty}$ decreases.
In this way the variation function allow us to compute the optimal sampling density $w^{-1}$ for a given set $A$.
This was already proposed for linear spaces in~\cite{cohen_2017_least-squares} and
a generalisation of this construction is stated in the subsequent theorem.

\begin{theorem}[Theorem~3.1 in~\cite{eigel_2022_convergence}] \label{thm:optimal_weight_function}
    $\mfrak{K}_A$ is $\rho$-measurable and
    \begin{equation}
        \norm{w\mfrak{K}_A}_{L^\infty\pars{Y,\rho}} \ge \norm{\mfrak{K}_A}_{L^1\pars{Y,\rho}}
    \end{equation}
    for any weight function $w$.
    The lower bound is attained by the weight function $w=\frac{\norm{\mfrak{K}_A}_{L^1\pars{Y,\rho}}}{\mfrak{K}_A}$.
\end{theorem}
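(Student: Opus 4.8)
My plan is to treat the three assertions — measurability of $\mfrak{K}_A$, the inequality, and its attainment — separately; only the first is genuinely delicate, the other two reducing to short duality arguments. \emph{Measurability.} I would write $\mfrak{K}_A$ as the pointwise supremum of the squared point-evaluations $y\mapsto\abs{a\pars{y}}^2$, $a\in U\pars{A}$, each of which is $\rho$-measurable since $a$ is (a chosen measurable representative of) an element of $L^\infty\pars{Y,\rho}$. An uncountable supremum of measurable functions need not be measurable, so the first step is to reduce to a countable one: from $\abs{a\pars{y}}\le w\pars{y}^{-1/2}\norm{a}_{w,\infty}$ the point-evaluation $v\mapsto v\pars{y}$ is $\norm{\bullet}_{w,\infty}$-continuous, hence $a\mapsto\abs{a\pars{y}}^2$ is $\norm{\bullet}_{w,\infty}$-continuous for every $y$; choosing a countable $\norm{\bullet}_{w,\infty}$-dense subset $D\subseteq U\pars{A}\subseteq\inner{A}$ — which exists whenever $\inner{A}$ is separable, in particular when $\dim\inner{A}<\infty$ — yields $\mfrak{K}_A=\sup_{a\in D}\abs{a\pars{\bullet}}^2$, a countable supremum of measurable functions, hence measurable.

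\emph{The inequality.} For an arbitrary weight function $w$ I would write $\mfrak{K}_A=w^{-1}\pars{w\mfrak{K}_A}$ and, using $w>0$ $\rho$-a.e.\ together with $\int_Y w^{-1}\dd\rho=1$, apply the elementary $\pars{1,\infty}$-H\"older bound:
\begin{equation*}
    \norm{\mfrak{K}_A}_{L^1\pars{Y,\rho}}=\int_Y w^{-1}\pars{w\mfrak{K}_A}\dd\rho\le\norm{w\mfrak{K}_A}_{L^\infty\pars{Y,\rho}}\int_Y w^{-1}\dd\rho=\norm{w\mfrak{K}_A}_{L^\infty\pars{Y,\rho}}.
\end{equation*}
The same chain shows the claim is non-vacuous when $\norm{\mfrak{K}_A}_{L^1\pars{Y,\rho}}=\infty$, since a finite right-hand side would there force a finite left-hand side.

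\emph{Sharpness.} Put $c:=\norm{\mfrak{K}_A}_{L^1\pars{Y,\rho}}$ and consider the non-degenerate case $0<c<\infty$ (for $c\in\braces{0,\infty}$ both sides of the inequality coincide trivially). Then $\mfrak{K}_A<\infty$ $\rho$-a.e., and, under the generic assumption $\mfrak{K}_A>0$ $\rho$-a.e.\ (which holds, e.g., as soon as $A$ contains a function that is nonzero a.e.), the function $w:=c/\mfrak{K}_A$ is positive and measurable with $\int_Y w^{-1}\dd\rho=c^{-1}\int_Y\mfrak{K}_A\dd\rho=1$, so it is a weight function; for it $w\mfrak{K}_A\equiv c$, whence $\norm{w\mfrak{K}_A}_{L^\infty\pars{Y,\rho}}=c=\norm{\mfrak{K}_A}_{L^1\pars{Y,\rho}}$ and the lower bound is attained.

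I expect the measurability of the supremum in~\eqref{eq:variation_constant} to be the main obstacle: it rests on replacing $U\pars{A}$ by a countable dense subfamily, which requires a separability input on $\inner{A}$ (automatic in the finite-dimensional regime underlying the quantitative part of Theorem~\ref{thm:P_RIP}), and one must also check that the $\rho$-null set on which $\mfrak{K}_A$ may vanish does not obstruct admissibility of the optimal weight.
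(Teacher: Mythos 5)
The paper does not prove this statement at all: it is quoted verbatim as Theorem~3.1 of~\cite{eigel_2022_convergence}, so there is no in-paper proof to compare against. Judged on its own, your argument for the inequality (the $\pars{1,\infty}$-H\"older step using $\int_Y w^{-1}\dd\rho=1$) and for its attainment (checking that $w=\norm{\mfrak{K}_A}_{L^1}/\mfrak{K}_A$ is admissible and makes $w\mfrak{K}_A$ constant) is correct and is certainly the intended one; your handling of the degenerate cases $c\in\braces{0,\infty}$ and of the set where $\mfrak{K}_A$ vanishes is a reasonable (and honestly flagged) tidying of edge cases the statement glosses over. The one substantive caveat is the measurability part: reducing the uncountable supremum in~\eqref{eq:variation_constant} to a countable one via a $\norm{\bullet}_{w,\infty}$-dense subset of $U\pars{A}$ requires a separability hypothesis on $\inner{A}$ that is not in the statement, and it also silently assumes that the pointwise bound $\abs{a\pars{y}}\le w\pars{y}^{-1/2}\norm{a}_{w,\infty}$ holds at the specific point $y$ rather than merely $\rho$-almost everywhere (the exceptional null sets depend on $a$, so one must fix canonical representatives for which point evaluation is genuinely controlled, as the paper's standing assumptions on $\mcal{M}$ suggest). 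You flag the separability issue yourself; since the quantitative use of the theorem in this paper is in the finite-dimensional regime, this is an honest restriction rather than an error, but be aware that the cited source must (and does) address measurability under its own hypotheses rather than yours.
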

The fact that the optimal sampling density in Theorem~\ref{thm:optimal_weight_function} is a scaled version of $\mfrak{K}_A$ is not surprising, since $\mfrak{K}_A$ is the point-wise supremum of the densities $p_a\pars{y} := \frac{\abs{a\pars{y}}^2}{\norm{a}^2}$ over all $a\in A$.
These are the optimal importance sampling densities for the integration of $\norm{a}^2 = \int_Y a^2 \dd\rho$, minimising the variance of the empirical estimate.
The subsequent theorem provide us with rules for the explicit computation and numerical approximation of $\mfrak{K}_A$ for arbitrary sets $A$.
These results will be frequently used in the remainder of this work.
\begin{theorem}[Basic properties of $\mfrak{K}$] \label{thm:K_properties}
    Let $A,B\subseteq\mcal{V}_{w,\infty}$ and $\mcal{A}\subseteq\mfrak{P}\pars{\mcal{V}_{w,\infty}}$.
    Then the following statements hold.
    \begin{thmenum}
        \item 
        $\mfrak{K}_{\bigcup\!\mcal{A}} = \sup_{A\in\mcal{A}} \mfrak{K}_{A}$, where $\bigcup\!\mcal{A} := \bigcup_{A\in\mcal{A}}A$.
        \label{thm:K_properties:union}
        \item $\mfrak{K}_{A} = \mfrak{K}_{\operatorname{cl}\pars{A}}$. \label{thm:K_properties:closure}
        \item $\mfrak{K}_{\bullet} : \mfrak{C}\pars{\mcal{V}_{w,\infty}\setminus \braces{0}} \to \mcal{V}_{w^2,\infty}$ is continuous with respect to the Hausdorff metric. \label{thm:K_properties:continuity:compact_sets}
        \item $\mfrak{K}_{\bullet} : \mfrak{P}\pars{\mcal{V}_{w,\infty}\setminus B\pars{0,r}} \to \mcal{V}_{w^2,\infty}$ is continuous with respect to the Hausdorff pseudometric for all $r>0$. \label{thm:K_properties:continuity:all_sets}
        \item $\mfrak{K}_{\bullet} : \operatorname{Cone}\pars{\mfrak{P}\pars{\mcal{V}_{w,\infty}}} \to \mcal{V}_{w^2,\infty}$ is continuous with respect to the truncated Hausdorff pseudometric. \label{thm:K_properties:continuity:cones}
        \item If $A\perp B$, then $\mfrak{K}_{A + B} \le \mfrak{K}_{A} + \mfrak{K}_{B}$. If $A$ and $B$ are linear spaces, then equality holds. \label{thm:K_properties:sum}
        \item If $A\indep B$, then $\mfrak{K}_{A\cdot B} = \mfrak{K}_{A} \cdot \mfrak{K}_{B}$. \label{thm:K_properties:product}
        \item If $A$ and $B$ are linear spaces, then $\mfrak{K}_{A\otimes B} = \mfrak{K}_{A} \cdot \mfrak{K}_{B}$. \label{thm:K_properties:tensor_product}
    \end{thmenum}
    Here the sum ($+$), product ($\cdot$), the stochastic independence ($\indep$), and the $L^2\pars{Y,\rho}$\-orthogonality ($\perp$) of the sets $A$ and $B$ have to be understood element-wise.
\end{theorem}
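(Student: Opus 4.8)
The plan is to dispatch the eight claims in three groups.

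\textbf{Claims \ref{thm:K_properties:union} and \ref{thm:K_properties:closure}} are essentially definitional. For \ref{thm:K_properties:union}, note that $U\pars*{\bigcup_{A\in\mcal{A}}A} = \bigcup_{A\in\mcal{A}}U\pars{A}$, so the defining supremum over the union is the supremum of the suprema. For \ref{thm:K_properties:closure}, the inclusion $A\subseteq\cl\pars{A}$ gives $\mfrak{K}_A\le\mfrak{K}_{\cl\pars{A}}$; for the reverse, since $\int_Y w^{-1}\dd\rho = 1$ the embedding $\mcal{V}_{w,\infty}\hookrightarrow L^2\pars{Y,\rho}$ is a contraction, so $v\mapsto\norm{v}$ is continuous, and if $a_k\to a$ in $\mcal{V}_{w,\infty}$ then $\sqrt{w\pars{y}}\abs{a_k\pars{y}-a\pars{y}}\le\norm{a_k-a}_{w,\infty}\to0$ for $\rho$-a.e.\ $y$. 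Hence for $a\in\cl\pars{A}\setminus\braces{0}$ and $a_k\in A\setminus\braces{0}$ with $a_k\to a$ one gets $\abs{a\pars{y}}^2/\norm{a}^2 = \lim_k\abs{a_k\pars{y}}^2/\norm{a_k}^2 \le \mfrak{K}_A\pars{y}$ for a.e.\ $y$, which upgrades to $\mfrak{K}_{\cl\pars{A}}\le\mfrak{K}_A$ in $\mcal{V}_{w^2,\infty}$ via the countable-dense reduction described below.

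\textbf{Claims \ref{thm:K_properties:continuity:compact_sets}--\ref{thm:K_properties:continuity:cones}} are the technical core. I would work with $\phi_A := \sqrt{w\,\mfrak{K}_A}$, using the linearised identity
\begin{equation*}
    \phi_A = \sup_{a\in U\pars{A}} g_a, \qquad g_a := \sqrt{w}\,\abs{a}, \qquad \norm{g_a}_{L^\infty\pars{Y,\rho}} = \norm{a}_{w,\infty},
\end{equation*}
so that $\norm{\mfrak{K}_A}_{w^2,\infty} = \norm{\phi_A}_{L^\infty}^2$, together with the fact that a pointwise supremum of functions is $1$-Lipschitz in the sup-norm with respect to the Hausdorff distance of the indexing family:
\begin{equation*}
    \norm{\phi_A - \phi_B}_{L^\infty} \le d_{\mathrm{H}}\pars*{\braces{g_a : a\in U\pars{A}},\, \braces{g_b : b\in U\pars{B}}} \le d_{\mathrm{H}}\pars*{U\pars{A}, U\pars{B}},
\end{equation*}
the last distance taken in $\mcal{V}_{w,\infty}$. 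It then suffices to show that $A\mapsto U\pars{A}$ is locally Lipschitz for $d_{\mathrm{H}}$: from $\tfrac{a}{\norm{a}}-\tfrac{b}{\norm{b}} = \tfrac{a-b}{\norm{a}} + \tfrac{b\pars{\norm{b}-\norm{a}}}{\norm{a}\norm{b}}$ and $\norm{a-b}\le\norm{a-b}_{w,\infty}$ one obtains $\norm[\big]{\tfrac{a}{\norm{a}}-\tfrac{b}{\norm{b}}}_{w,\infty}\le\tfrac{1}{\norm{a}}\pars*{1+\tfrac{\norm{b}_{w,\infty}}{\norm{b}}}\norm{a-b}_{w,\infty}$, so on any family of elements whose $L^2$-norm is $\ge c>0$ and whose $\mcal{V}_{w,\infty}$-norm is $\le C$ the Lipschitz constant of $U$ is at most $c^{-1}\pars{1+C/c}$. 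For \ref{thm:K_properties:continuity:compact_sets}, compactness together with $0\notin A\cup B$ supplies such $c,C$, uniformly along a convergent sequence, whence $\norm{\mfrak{K}_{A_k}-\mfrak{K}_A}_{w^2,\infty} = \norm{\phi_{A_k}^2-\phi_A^2}_{L^\infty}\le\pars{\norm{\phi_{A_k}}_{L^\infty}+\norm{\phi_A}_{L^\infty}}\norm{\phi_{A_k}-\phi_A}_{L^\infty}\to0$. Claim \ref{thm:K_properties:continuity:all_sets} is the same argument, with the ball condition replacing compactness as the source of the lower bound on $L^2$-norms; and \ref{thm:K_properties:continuity:cones} reduces to it since $\mfrak{K}$ is invariant under positive rescaling of the elements of a set, so $\mfrak{K}_A = \mfrak{K}_{A\cap S\pars{0,1}}$ for a cone $A$, while $d_{\mathrm{tH}}\pars{A,B} = d_{\mathrm{H}}\pars{A\cap S\pars{0,1}, B\cap S\pars{0,1}}$ by definition and $A\cap S\pars{0,1}\subseteq\mcal{V}_{w,\infty}\setminus B\pars{0,\tfrac12}$. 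The genuine obstacle in this group is measure-theoretic rather than conceptual: the supremum defining $\mfrak{K}_A$ ranges over an uncountable family, so $g_a\le g_b+\norm{g_a-g_b}_{L^\infty}$ holds only off an $\pars{a,b}$-dependent null set; this is handled by passing to countable dense subfamilies, using separability of the sets in play as in~\cite{eigel_2022_convergence} — the same ingredient that underlies the $\rho$-measurability of $\mfrak{K}_A$ recorded in Theorem~\ref{thm:optimal_weight_function}.

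\textbf{Claims \ref{thm:K_properties:sum}--\ref{thm:K_properties:tensor_product}} reduce to Cauchy--Schwarz. For \ref{thm:K_properties:sum}, $a\perp b$ gives $\norm{a+b}^2 = \norm{a}^2+\norm{b}^2$, while Cauchy--Schwarz in $\mbb{R}^2$ gives $\abs{a\pars{y}+b\pars{y}}^2 \le \pars*{\norm{a}^2+\norm{b}^2}\pars*{\tfrac{\abs{a\pars{y}}^2}{\norm{a}^2}+\tfrac{\abs{b\pars{y}}^2}{\norm{b}^2}}$; dividing by $\norm{a+b}^2$ and taking the supremum over $a\in A,\,b\in B$ yields $\mfrak{K}_{A+B}\le\mfrak{K}_A+\mfrak{K}_B$. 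If $A,B$ are linear, then for fixed $y$ the element $\hat a\pars{y}\,\hat a + \hat b\pars{y}\,\hat b \in A+B$ — where $\hat a\in A,\,\hat b\in B$ are unit-norm near-maximisers of $\abs{\cdot\pars{y}}^2/\norm{\cdot}^2$ with $\hat a\pars{y},\hat b\pars{y}\ge0$ — realises $\mfrak{K}_A\pars{y}+\mfrak{K}_B\pars{y}$ up to the approximation error, giving the reverse inequality (in finite dimensions one may argue directly with the Riesz representers $k_A^y\in A,\,k_B^y\in B$ of the evaluation functionals, for which $\hat a\pars{y}\hat a = k_A^y$ and $k_A^y\pars{y} = \norm{k_A^y}^2 = \mfrak{K}_A\pars{y}$). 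For \ref{thm:K_properties:product}, stochastic independence gives $\norm{ab}^2 = \norm{a}^2\norm{b}^2$ and $\abs{\pars{ab}\pars{y}}^2 = \abs{a\pars{y}}^2\abs{b\pars{y}}^2$, so $U\pars{A\cdot B} = U\pars{A}\cdot U\pars{B}$ and $\mfrak{K}_{A\cdot B}\pars{y} = \sup_{\hat a\in U\pars{A},\,\hat b\in U\pars{B}}\abs{\hat a\pars{y}}^2\abs{\hat b\pars{y}}^2 = \mfrak{K}_A\pars{y}\mfrak{K}_B\pars{y}$, since the supremum of a nonnegative product over a product index set factorises. Finally, for \ref{thm:K_properties:tensor_product}, writing $Y=Y_1\times Y_2$ and $\rho = \rho_1\otimes\rho_2$ for the product structure underlying $A\otimes B$, I reduce to finite-dimensional subspaces: by \ref{thm:K_properties:union}, $\mfrak{K}$ of a linear space is the supremum of $\mfrak{K}$ over its finite-dimensional subspaces, and every finite-dimensional subspace of $A\otimes B$ lies in $A'\otimes B'$ for finite-dimensional $A'\subseteq A,\,B'\subseteq B$; for orthonormal bases $\braces{a_i}$ of $A'$ and $\braces{b_j}$ of $B'$ the family $\braces{a_i\otimes b_j}$ is orthonormal in $L^2\pars{Y,\rho}$, and since $\mfrak{K}_W = \sum_\alpha\abs{w_\alpha}^2$ for any orthonormal basis $\braces{w_\alpha}$ of a finite-dimensional $W$ (again Cauchy--Schwarz), $\mfrak{K}_{A'\otimes B'}\pars{y_1,y_2} = \sum_{i,j}\abs{a_i\pars{y_1}}^2\abs{b_j\pars{y_2}}^2 = \mfrak{K}_{A'}\pars{y_1}\mfrak{K}_{B'}\pars{y_2}$; taking the supremum over $A'$ and $B'$ and factorising as in \ref{thm:K_properties:product} gives $\mfrak{K}_{A\otimes B} = \mfrak{K}_A\cdot\mfrak{K}_B$.
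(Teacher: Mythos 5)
Your proposal follows essentially the same route as the paper's Appendix~\ref{proof:thm:K_properties}: the union and closure claims by direct limiting arguments, the continuity claims via the factorisation $\mfrak{K}_{\bullet} = \operatorname{sq}\circ\sup\circ\operatorname{abs}\circ\, U$ together with the $1$-Lipschitzness of the pointwise supremum in the Hausdorff (pseudo)metric and the continuity of the normalisation map $U$ on each of the three domains, and the algebraic claims via Cauchy--Schwarz (the paper phrases the sum rule as a Rayleigh-quotient bound $\norm{C_{\vec{v},y}}_2^2$, which is the same inequality) and the orthonormal-basis identity for tensor products. The only detail worth flagging is that your explicit local Lipschitz constant for $U$ involves the ratio $\norm{b}_{w,\infty}/\norm{b}$, which compactness controls in claim 3 but which the hypothesis of claim 4 does not obviously supply; this is a point the paper's own proof also passes over when it asserts, without further argument, that $u\mapsto u/\norm{u}$ is uniformly continuous on $\mcal{V}_{w,\infty}\setminus B\pars{0,r}$.
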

The proof of this theorem is rather technical in some parts and can be found in Appendix~\ref{proof:thm:K_properties}.
As a consequence of Theorem~\ref{thm:K_properties:union} it follows that the function $\mfrak{K}_{\bullet}$ is monotonic, i.e.\ $A\subseteq B$ implies $\mfrak{K}_A \le \mfrak{K}_B$.
This and the continuity of $\mfrak{K}$ allows estimating the variation constant numerically.
Moreover, the properties of the variation function also induce analogous properties of the norm $\norm{\mfrak{K}_A}_{w,\infty}$.
Theorem~\ref{thm:K_properties:sum} for example, implies that for any $d$-dimensional linear space $A$ spanned by the $L^2\pars{Y,\rho}$-orthonormal basis $\braces{B_k}_{k\in\bracs{d}}$,
\begin{equation} \label{eq:KU_properties:dim}
    \norm{\mfrak{K}_{A}}_{w,\infty}
	= \norm{\mfrak{K}_{\inner{B_1}\oplus\cdots\oplus\inner{B_d}}}
	\le \sum_{k\in\bracs{d}} \norm{B_k}_{w,\infty}^2 .
\end{equation}

\begin{remark}
	A common misconception is that the probability bound in Theorem~\ref{thm:P_RIP} relies primarily on the metric entropy (cf.~\cite{cockreham_2017_reach}) of the model class.
    This however is not true.
    To see that $\norm{\mfrak{K}_A}_{w,\infty}$ can not be controlled by the metric entropy of $A$, consider any set $B$ for which $U\pars{B}$ is compact.
	By continuity, there exists $b^*\in B$ such that $\norm{\mfrak{K}_{\braces{b^*}}}_{w,\infty} \ge \norm{\mfrak{K}_{\braces{b}}}_{w,\infty}$ for all $b\in B$.
	Thus, $\norm{\mfrak{K}_{\braces{b^*}}}_{w,\infty} \ge \norm{\mfrak{K}_{A}}_{w,\infty}$ for any subset $A\subseteq B$, independent of its metric entropy.
\end{remark}

We use the remainder of this section to relate the variation function associated with a generic model class of tensor networks $\mcal{M}$ to the variation function of its ambient space $\inner{\mcal{M}}$.
Doing this rigorously requires the concept of rank--$1$ tensors.
Given vector spaces $\mcal{V}_1,\ldots,\mcal{V}_M$, we define the set of \emph{rank--$1$ tensors} (cf.~\cite{hitchcock1927cp_format}) as $$\mcal{T}_{1} := \braces{v_1\otimes\cdots\otimes v_M : v_m\in \mcal{V}_m\text{ for all }m} = \mcal{V}_1\cdots\mcal{V}_M,$$ i.e.\ the set of element-wise products of functions in $\mcal{V}_1, \ldots, \mcal{V}_M$.
\begin{theorem}
\label{thm:K_TN}
    Let $\mcal{M}$ be conic, symmetric and satisfy $\mcal{T}_1 \subseteq \operatorname{cl}\pars{\mcal{M}} \subseteq \mcal{V}_1\otimes\cdots\otimes\mcal{V}_M$.
    Then $$\mfrak{K}_{\braces{u_{\mcal{M}}} - \mcal{M}} = \mfrak{K}_{\braces{u_{\inner{\mcal{M}}}} - \inner{\mcal{M}}} .$$
\end{theorem}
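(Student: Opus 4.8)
The plan is to show that both sides of the asserted identity equal $\mfrak{K}_{\mcal{W}}$, where $\mcal{W} := \mcal{V}_1\otimes\cdots\otimes\mcal{V}_M$ is the ambient tensor space. I would use repeatedly that $\mfrak{K}_\bullet$ is monotone (a consequence of Theorem~\ref{thm:K_properties:union}), that $\mfrak{K}_A = \mfrak{K}_{\cl\pars{A}}$ by Theorem~\ref{thm:K_properties:closure}, and that $\mfrak{K}_A = \mfrak{K}_{\operatorname{Cone}\pars{A}}$, which is immediate from~\eqref{eq:variation_constant} since the normalisation in $U\pars{A}$ is scale invariant. Note also that $\mcal{M}\subseteq\inner{\mcal{M}}\subseteq\mcal{W}$, because $\mcal{M}\subseteq\cl\pars{\mcal{M}}\subseteq\mcal{W}$ and $\mcal{W}$ is a linear space.

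The first step, and the only one that is not routine, is to prove $\mfrak{K}_{\mcal{T}_1} = \mfrak{K}_{\mcal{W}}$. Since $\mcal{T}_1 = \mcal{V}_1\cdots\mcal{V}_M$ is the element-wise product of the stochastically independent linear spaces $\mcal{V}_1,\ldots,\mcal{V}_M$, Theorem~\ref{thm:K_properties:product} applied inductively gives $\mfrak{K}_{\mcal{T}_1} = \mfrak{K}_{\mcal{V}_1}\cdots\mfrak{K}_{\mcal{V}_M}$, while Theorem~\ref{thm:K_properties:tensor_product} applied inductively gives $\mfrak{K}_{\mcal{W}} = \mfrak{K}_{\mcal{V}_1}\cdots\mfrak{K}_{\mcal{V}_M}$; hence the two coincide. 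Combining this with $\mcal{T}_1\subseteq\cl\pars{\mcal{M}}\subseteq\mcal{W}$ and monotonicity yields $\mfrak{K}_{\mcal{T}_1}\le\mfrak{K}_{\cl\pars{\mcal{M}}}\le\mfrak{K}_{\mcal{W}}$, so with Theorem~\ref{thm:K_properties:closure} we obtain $\mfrak{K}_{\mcal{M}} = \mfrak{K}_{\cl\pars{\mcal{M}}} = \mfrak{K}_{\mcal{W}}$; the analogous squeeze along $\mcal{M}\subseteq\inner{\mcal{M}}\subseteq\mcal{W}$ gives $\mfrak{K}_{\inner{\mcal{M}}} = \mfrak{K}_{\mcal{W}}$. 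Since $\inner{\mcal{M}}$ is a linear space containing its best approximation $u_{\inner{\mcal{M}}}$, we have $\braces{u_{\inner{\mcal{M}}}} - \inner{\mcal{M}} = \inner{\mcal{M}}$, so the right-hand side of the claim is exactly $\mfrak{K}_{\mcal{W}}$.

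It remains to show $\mfrak{K}_{\braces{u_{\mcal{M}}} - \mcal{M}} = \mfrak{K}_{\mcal{W}}$. The inequality ``$\le$'' is immediate from monotonicity, since $u_{\mcal{M}}\in\mcal{M}\subseteq\mcal{W}$ forces $\braces{u_{\mcal{M}}} - \mcal{M}\subseteq\mcal{W}$. For ``$\ge$'' I would use that $\mcal{M}$ is a symmetric cone, so that $\lambda\mcal{M} = \mcal{M}$ for every $\lambda>0$ and $-\mcal{M} = \mcal{M}$, whence
\begin{equation*}
    \operatorname{Cone}\pars{\braces{u_{\mcal{M}}} - \mcal{M}} = \braces{\lambda u_{\mcal{M}} - v : \lambda > 0,\ v\in\mcal{M}} .
\end{equation*}
Fixing $v\in\mcal{M}$ and letting $\lambda\downarrow0$, the point $\lambda u_{\mcal{M}} - v$ converges to $-v$ in $\mcal{V}_{w,\infty}$, because $\norm{\lambda u_{\mcal{M}}}_{w,\infty} = \lambda\norm{u_{\mcal{M}}}_{w,\infty}\to0$; hence $-v$, and therefore by symmetry all of $\mcal{M}$, lies in $\cl\pars{\operatorname{Cone}\pars{\braces{u_{\mcal{M}}} - \mcal{M}}}$. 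Using scale invariance and then Theorem~\ref{thm:K_properties:closure} we have $\mfrak{K}_{\braces{u_{\mcal{M}}} - \mcal{M}} = \mfrak{K}_{\cl\pars{\operatorname{Cone}\pars{\braces{u_{\mcal{M}}} - \mcal{M}}}}$, and by monotonicity together with the inclusion just established and the previous step this is $\ge \mfrak{K}_{\mcal{M}} = \mfrak{K}_{\mcal{W}}$. Combined with ``$\le$'' this completes the argument.

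I expect the main obstacle to be the identity $\mfrak{K}_{\mcal{T}_1} = \mfrak{K}_{\mcal{W}}$: monotonicity alone gives only $\mfrak{K}_{\mcal{T}_1}\le\mfrak{K}_{\mcal{W}}$, and the reverse inequality genuinely needs the multiplicative structure encoded in Theorem~\ref{thm:K_properties} (the product rule, reflecting independence across the tensor modes). The remaining steps are bookkeeping with closures and cones; the only point needing attention there is that all closures, and the limit $\lambda\downarrow0$, are taken in $\mcal{V}_{w,\infty}$.
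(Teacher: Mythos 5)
Your proof is correct and follows essentially the same route as the paper: both sides are reduced to $\mfrak{K}_{\mcal{V}_1\otimes\cdots\otimes\mcal{V}_M}$ via the squeeze $\mcal{T}_1\subseteq\cl\pars{\mcal{M}}\subseteq\inner{\mcal{M}}$ together with the product and tensor-product rules of Theorem~\ref{thm:K_properties}, and the conic/symmetric hypotheses are used to reduce $\braces{u_{\mcal{M}}}-\mcal{M}$ to $\mcal{M}$. The only (harmless) deviation is that where the paper invokes Lemma~\ref{lem:KuM=KspanuM} to get $\mfrak{K}_{\braces{u_{\mcal{M}}}-\mcal{M}}\ge\mfrak{K}_{\mcal{M}}$, you obtain the same inequality directly by passing to $\operatorname{Cone}\pars{\braces{u_{\mcal{M}}}-\mcal{M}}$ and letting $\lambda\downarrow0$ in $\mcal{V}_{w,\infty}$, which is a slightly leaner argument proving only what is needed.
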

\begin{proof}
    The claim follows from the sequence of inequalities
    \begin{equation*}
        \mfrak{K}_{\braces{u_{\inner{\mcal{M}}}} - \inner{\mcal{M}}}
        = \mfrak{K}_{\braces{u_{\mcal{M}}} - \inner{\mcal{M}}}
        \ge \mfrak{K}_{\braces{u_{\mcal{M}}} - \mcal{M}}
        \ge \mfrak{K}_{\mcal{M}}
        = \mfrak{K}_{\inner{\mcal{M}}}
        = \mfrak{K}_{\braces{u_{\inner{\mcal{M}}}} - \inner{\mcal{M}}} .
    \end{equation*}
    The first and the last equality hold, since $\braces{u_{\inner{\mcal{M}}}} - \inner{\mcal{M}} = \inner{\mcal{M}} = \braces{u_{\mcal{M}}} - \inner{\mcal{M}}$.
    The remaining relations follow from Theorem~\ref{thm:K_properties:union}, and the subsequent Lemmas~\ref{lem:KuM=KspanuM} and~\ref{lem:KM=KspanM}.
\end{proof}

Since $\norm{\mfrak{K}_{\braces{u_{\inner{\mcal{M}}}} - \inner{\mcal{M}}}}_{w,\infty} = \norm{\mfrak{K}_{\inner{\mcal{M}}}}_{w,\infty} \ge \operatorname{dim}\pars{\inner{\mcal{M}}}$ grows exponentially, by equation~\eqref{eq:KU_properties:dim}, Theorem~\ref{thm:K_TN}
implies that the probability of $\operatorname{RIP}_{\braces{u_{\mcal{M}}}-\mcal{M}}\pars{\delta}$ suffers from the curse of dimensionality.
Numerical evidence for this claim is provided in Figure~\ref{fig:recovery_phase_diagram}. 
Although the number of parameters of $\mcal{M}$ is typically much smaller than $\operatorname{dim}\pars{\inner{\mcal{M}}}$,
the observed behaviour is not surprising.
In the setting of low-rank matrix and tensor recovery it is well known, that the sought tensor has to satisfy an additional incoherence condition to be recoverable with few sample points (cf.~\cite{Candes2010convexRelaxationMatrixCompletion,yuan2015tensor_completion}).

\begin{figure}[htb]
    \centering
    \begin{subfigure}[t]{0.5\textwidth}
        \centering
        \includegraphics[width=\textwidth]{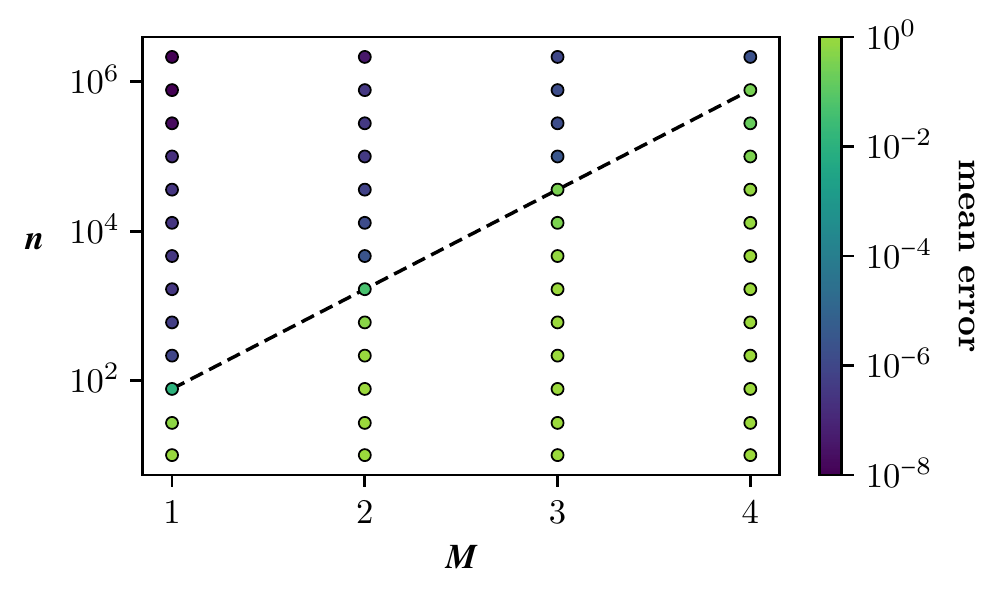}
        \caption{$u$ is defined by $C_{k_1,...,k_M} = 1$.}
        \label{fig:recovery_phase_diagram:worst_case}
    \end{subfigure}
    \begin{subfigure}[t]{0.5\textwidth}
        \centering
        \includegraphics[width=\textwidth]{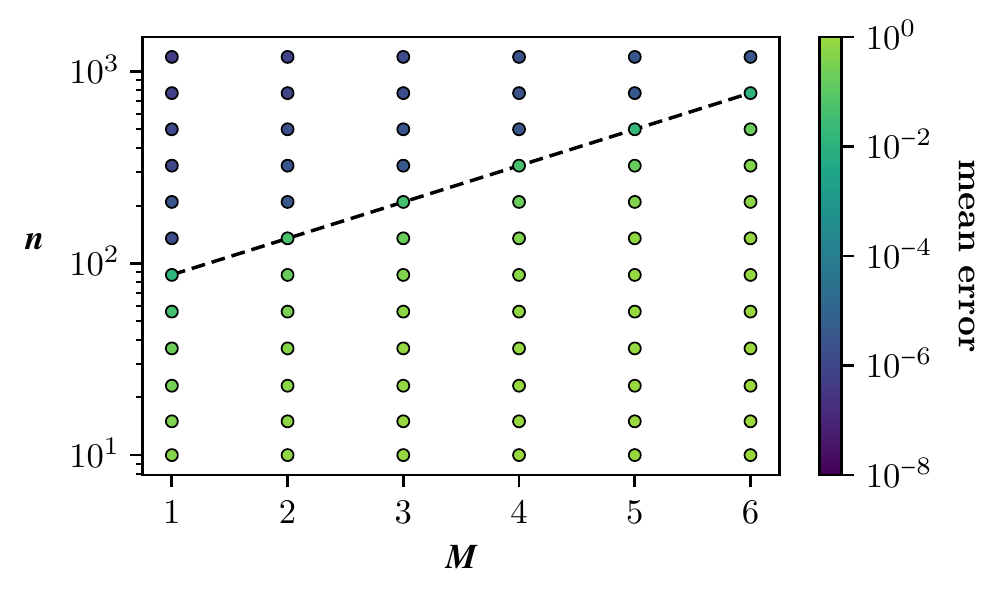}
        \caption{$u\pars{y} := \exp\pars{y_1+\cdots+y_M}$.}
        \label{fig:recovery_phase_diagram:exp}
    \end{subfigure}
    \caption{Two phase diagrams for the empirical approximation of a function $u$ in the tensor product basis of Legendre polynomials.
    The coefficient tensor $C\in\pars{\mbb{R}^{15}}^{\otimes M}$ of the best approximation is of rank $1$ in both examples.
    For every order $M$ and number of samples $n$, the mean error is computed as the relative $L^2$-error of the approximation, averaged over $20$ independent realisations.
    A hard-thresholding algorithm (cf.~\cite{eigel2018vmc0}) was used for recovery.}
    \label{fig:recovery_phase_diagram}
\end{figure}

We use the remainder of this section to prove the Lemmas~\ref{lem:KuM=KspanuM} and~\ref{lem:KM=KspanM}, that are used in the proof of Theorem~\ref{thm:K_TN}.

\begin{lemma} \label{lem:closure_of_sum}
    $\cl\pars{A+B} \supseteq \cl\pars{A}+\cl\pars{B}$ for all sets $A$ and $B$.
\end{lemma}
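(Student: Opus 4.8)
The plan is to unwind the definition of the Minkowski sum and exploit the continuity of vector addition in the ambient normed space. If either $A$ or $B$ is empty, both sides of the inclusion are empty and there is nothing to prove, so assume $A$ and $B$ are non-empty.

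First I would fix an arbitrary point $x\in\cl\pars{A}+\cl\pars{B}$ and write $x=a+b$ with $a\in\cl\pars{A}$ and $b\in\cl\pars{B}$. Since the ambient space is a metric linear space, closures are characterised sequentially, so there exist sequences $\pars{a_k}_{k\in\mbb{N}}\subseteq A$ and $\pars{b_k}_{k\in\mbb{N}}\subseteq B$ with $a_k\to a$ and $b_k\to b$. For every $k$ the element $a_k+b_k$ lies in $A+B$, and $\norm{\pars{a_k+b_k}-x}\le\norm{a_k-a}+\norm{b_k-b}\to0$, so $x$ is a limit of points of $A+B$, i.e.\ $x\in\cl\pars{A+B}$. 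As $x$ was arbitrary, this gives $\cl\pars{A}+\cl\pars{B}\subseteq\cl\pars{A+B}$, which is the claim.

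There is no genuine obstacle here: the only ingredients are the triangle inequality (equivalently, continuity of the addition map $\pars{a,b}\mapsto a+b$) and the sequential description of the closure, both available in $\mcal{V}_{w,\infty}$ and in any normed space in which the lemma will be applied. I would also remark that the reverse inclusion fails in general (e.g.\ a non-closed sum of two closed sets), which is why only $\supseteq$ is asserted.
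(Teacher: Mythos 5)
Your proof is correct and follows essentially the same route as the paper: pick $a\in\cl\pars{A}$, $b\in\cl\pars{B}$, take approximating sequences in $A$ and $B$, and use continuity of addition to conclude $a+b\in\cl\pars{A+B}$. The extra remarks about empty sets and the failure of the reverse inclusion are fine but not needed.
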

\begin{proof}
    Let $a\in \cl\pars{A}$ and $b\in \cl\pars{B}$.
    Then there exist sequences $\braces{a_k}\in A$ and $\braces{b_k}\in B$ such that $a_k\to a$ and $b_k \to b$.
	Since $a_k + b_k\in A+B$ we have $a+b = \lim_{k\to\infty} \pars{a_k+b_k} \in \cl\pars{A+B}$.
\end{proof}

\begin{lemma} \label{lem:KuM=KspanuM}
    Let $\mcal{M}$ be conic and symmetric and let $v\in\mcal{V}$.
    Then $\mfrak{K}_{\braces{v}-\mcal{M}} = \mfrak{K}_{\inner{v}+\cl\pars{\mcal{M}}}$.
\end{lemma}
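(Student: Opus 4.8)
The plan is to prove the equality by a two‑sided estimate, using that $\mfrak{K}$ is closure‑invariant (Theorem~\ref{thm:K_properties:closure}) and monotone (a consequence of Theorem~\ref{thm:K_properties:union}), together with the conic and symmetric structure of $\mcal{M}$. First I would reduce to a statement about translates of $\cl\pars{\mcal{M}}$. Symmetry of $\mcal{M}$ gives $\braces{v}-\mcal{M} = \braces{v}+\mcal{M}$; by Lemma~\ref{lem:closure_of_sum} we have $\cl\pars{\braces{v}+\mcal{M}} \supseteq \braces{v}+\cl\pars{\mcal{M}}$, and the reverse inclusion is immediate (if $v+m_k\to z$ with $m_k\in\mcal{M}$, then $m_k\to z-v\in\cl\pars{\mcal{M}}$), so $\cl\pars{\braces{v}-\mcal{M}} = \braces{v}+\cl\pars{\mcal{M}}$. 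Theorem~\ref{thm:K_properties:closure} then yields $\mfrak{K}_{\braces{v}-\mcal{M}} = \mfrak{K}_{\braces{v}+\cl\pars{\mcal{M}}}$, and it remains to prove $\mfrak{K}_{\braces{v}+\cl\pars{\mcal{M}}} = \mfrak{K}_{\inner{v}+\cl\pars{\mcal{M}}}$. I would also record here that $\cl\pars{\mcal{M}}$ inherits conicity and symmetry from $\mcal{M}$, by continuity of scalar multiplication and of negation.

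The inequality $\mfrak{K}_{\braces{v}+\cl\pars{\mcal{M}}} \le \mfrak{K}_{\inner{v}+\cl\pars{\mcal{M}}}$ is immediate from monotonicity, since $\braces{v}+\cl\pars{\mcal{M}} \subseteq \inner{v}+\cl\pars{\mcal{M}}$. For the reverse inequality I would fix a point $y$ and an arbitrary nonzero $w = \lambda v + m$ with $\lambda\in\mbb{R}$ and $m\in\cl\pars{\mcal{M}}$, and bound $\abs{w\pars{y}}^2/\norm{w}^2$ by $\mfrak{K}_{\braces{v}+\cl\pars{\mcal{M}}}\pars{y}$. Replacing $w$ by $-w$ if necessary — which changes neither the quotient nor membership in $\inner{v}+\cl\pars{\mcal{M}}$, by symmetry of $\cl\pars{\mcal{M}}$ — I may assume $\lambda\ge0$. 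If $\lambda>0$, conicity gives $m/\lambda\in\cl\pars{\mcal{M}}$, so $w/\lambda = v+m/\lambda\in\braces{v}+\cl\pars{\mcal{M}}$ and hence $w/\norm{w} = \pars{w/\lambda}/\norm{w/\lambda} \in U\pars{\braces{v}+\cl\pars{\mcal{M}}}$, which yields the bound at once. If $\lambda=0$, then $m=w\ne0$ and, for every $t>0$, conicity gives $v+tm\in\braces{v}+\cl\pars{\mcal{M}}$, so $\abs{v\pars{y}+tm\pars{y}}^2/\norm{v+tm}^2 \le \mfrak{K}_{\braces{v}+\cl\pars{\mcal{M}}}\pars{y}$; letting $t\to\infty$, the left‑hand side converges to $\abs{m\pars{y}}^2/\norm{m}^2$, which gives the bound once more.

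The only step that needs a little care is the passage $t\to\infty$ in the case $\lambda=0$. It is justified because $v\in\mcal{V}$, so $v/t\to0$ in $\norm{\bullet}_{w,\infty}$ and hence in $\norm{\bullet}$; therefore $\norm{v+tm} = t\norm{v/t+m}\to\infty$ with $\norm{v/t+m}\to\norm{m}>0$, and the pointwise numerators behave accordingly, so that the normalised functions $a_t := \pars{v+tm}/\norm{v+tm}$ satisfy $\abs{a_t\pars{y}}^2\to\abs{m\pars{y}}^2/\norm{m}^2$; since $\mfrak{K}_{\braces{v}+\cl\pars{\mcal{M}}}\pars{y}$ is a pointwise supremum, it dominates this limit. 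Everything else is routine; the measurability and point‑evaluation technicalities are exactly those already handled in Theorem~\ref{thm:K_properties}, and the only additional bookkeeping is the transfer of conicity and symmetry from $\mcal{M}$ to $\cl\pars{\mcal{M}}$ noted above. I regard this limiting argument as the conceptual heart of the statement: it is what forces the line $\inner{v}$ (and thus the directions of $\cl\pars{\mcal{M}}$ seen ``at infinity'' along the rays $v+tm$) to contribute nothing beyond what the single translate $\braces{v}+\cl\pars{\mcal{M}}$ already contributes to $\mfrak{K}$.
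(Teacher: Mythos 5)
Your proof is correct, and it reaches the same sandwich as the paper's proof (the easy inclusion handled by monotonicity, the substantive direction being that enlarging $\braces{v}$ to the line $\inner{v}$ does not increase $\mfrak{K}$), but it executes the substantive direction element-wise rather than through the set calculus of Theorem~\ref{thm:K_properties}. The paper embeds $\inner{v}-\cl\pars{\mcal{M}}$ into $\cl\pars{\operatorname{Cone}\pars{\braces{-v,v}}-\mcal{M}}$, rewrites $\operatorname{Cone}\pars{\braces{-v,v}}-\mcal{M}$ as $\operatorname{Cone}\pars{-\pars{\braces{v}-\mcal{M}}}\cup\operatorname{Cone}\pars{\braces{v}-\mcal{M}}$ using conicity and symmetry of $\mcal{M}$, and then concludes with the union rule, the closure rule and the scale invariance $\mfrak{K}_{CA}=\mfrak{K}_A$ extracted from Theorem~\ref{thm:K_properties:product}. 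Your rescaling of $w=\lambda v+m$ by $1/\lambda$ is precisely that scale invariance done by hand, and your limit $t\to\infty$ along the rays $v+tm$ is the element-wise counterpart of the paper's closure step: there the $\lambda=0$ stratum is absorbed because $m=\lim_{t\to\infty}\tfrac{1}{t}\pars{v+tm}$ lies in the closure of the cone, and Theorem~\ref{thm:K_properties:closure} does the rest. What your route buys is independence from the product rule and the cone identities --- only closure invariance and monotonicity of $\mfrak{K}$ are needed --- at the price of carrying out the pointwise limit explicitly, which you justify correctly (for fixed $y$ one has $v\pars{y}/t\to 0$ and $\norm{v/t+m}\to\norm{m}>0$). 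Your observation that $\cl\pars{\braces{v}-\mcal{M}}=\braces{v}+\cl\pars{\mcal{M}}$ exactly, not just one inclusion, is a small sharpening of Lemma~\ref{lem:closure_of_sum} that the paper does not need but that cleanly isolates where symmetry and conicity enter.
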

\begin{proof}
	By Theorem~\ref{thm:K_properties:union}, Lemma~\ref{lem:closure_of_sum} and Theorem~\ref{thm:K_properties:closure}, it holds that
	\begin{equation}
		\mfrak{K}_{\braces{v}-\mcal{M}}
		\le \mfrak{K}_{\inner{v}-\cl\pars{\mcal{M}}}
		\le \mfrak{K}_{\cl\pars{\operatorname{Cone}\pars{\braces{-v,v}}-\mcal{M}}}
		= \mfrak{K}_{\operatorname{Cone}\pars{\braces{-v,v}}-\mcal{M}} .
	\end{equation}
	Since $\mcal{M}$ is conic and symmetric, it holds that
	\begin{equation}
		\operatorname{Cone}\pars{\braces{-v,v}}-\mcal{M} = \operatorname{Cone}\pars{\braces{-v,v}-\mcal{M}} = \operatorname{Cone}\pars{-\pars{\braces{v}-\mcal{M}}} \cup \operatorname{Cone}\pars{\braces{v}-\mcal{M}}
	\end{equation}
	and consequently $\mfrak{K}_{\operatorname{Cone}\pars{\braces{-v,v}}-\mcal{M}} = 
	\max\braces{\mfrak{K}_{\operatorname{Cone}\pars{-\pars{\braces{v}-\mcal{M}}}}, \mfrak{K}_{\operatorname{Cone}\pars{\braces{v}-\mcal{M}}}}$.
	Since Theorem~\ref{thm:K_properties:product} implies that $\mfrak{K}_{C A} = \mfrak{K}_A$ for any set $A$ and all subsets $C\subseteq\mbb{R}$, it follows that $\mfrak{K}_{\operatorname{Cone}\pars{\braces{-v,v}}-\mcal{M}} = \mfrak{K}_{\braces{v}-\mcal{M}}$.
\end{proof}


\begin{lemma} \label{lem:KM=KspanM}
    Let
    $\mcal{M}$ satisfy 
    $\mcal{T}_1 \subseteq \operatorname{cl}\pars{\mcal{M}} \subseteq \mcal{V}_1\otimes\cdots\otimes\mcal{V}_M$.
    Then $\mfrak{K}_{\mcal{M}} = \mfrak{K}_{\inner{\mcal{M}}}$.
\end{lemma}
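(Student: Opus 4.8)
The plan is to sandwich $\mfrak{K}_{\mcal{M}}$ between $\mfrak{K}_{\inner{\mcal{M}}}$ and itself. One direction is immediate: since $\mcal{M}\subseteq\inner{\mcal{M}}$, the monotonicity of $\mfrak{K}_{\bullet}$ (a consequence of Theorem~\ref{thm:K_properties:union}) gives $\mfrak{K}_{\mcal{M}}\le\mfrak{K}_{\inner{\mcal{M}}}$. The work lies in the reverse inequality, which I would route through the full tensor product space $\mcal{W}:=\mcal{V}_1\otimes\cdots\otimes\mcal{V}_M$ and the cone $\mcal{T}_1$ of rank--$1$ tensors.

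First I would note that $\inner{\mcal{M}}\subseteq\mcal{W}$: by hypothesis $\mcal{M}\subseteq\cl\pars{\mcal{M}}\subseteq\mcal{W}$, and $\mcal{W}$ is a linear space, hence contains the span of $\mcal{M}$; monotonicity then yields $\mfrak{K}_{\inner{\mcal{M}}}\le\mfrak{K}_{\mcal{W}}$. The central step is the identity $\mfrak{K}_{\mcal{W}}=\mfrak{K}_{\mcal{T}_1}$. Iterating Theorem~\ref{thm:K_properties:tensor_product} over the $M$ tensor factors gives $\mfrak{K}_{\mcal{W}}=\mfrak{K}_{\mcal{V}_1}\cdots\mfrak{K}_{\mcal{V}_M}$, while iterating Theorem~\ref{thm:K_properties:product} over the same factors — here one uses that in the tensor product setting the $\mcal{V}_m$ depend on disjoint blocks of coordinates and are therefore mutually stochastically independent — gives $\mfrak{K}_{\mcal{T}_1}=\mfrak{K}_{\mcal{V}_1\cdots\mcal{V}_M}=\mfrak{K}_{\mcal{V}_1}\cdots\mfrak{K}_{\mcal{V}_M}$. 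Comparing the two expansions, $\mfrak{K}_{\mcal{W}}=\mfrak{K}_{\mcal{T}_1}$.

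It then remains to climb back down. Since $\mcal{T}_1\subseteq\cl\pars{\mcal{M}}$, monotonicity gives $\mfrak{K}_{\mcal{T}_1}\le\mfrak{K}_{\cl\pars{\mcal{M}}}$, and Theorem~\ref{thm:K_properties:closure} gives $\mfrak{K}_{\cl\pars{\mcal{M}}}=\mfrak{K}_{\mcal{M}}$. Chaining all the relations,
\[
    \mfrak{K}_{\mcal{M}}\ \le\ \mfrak{K}_{\inner{\mcal{M}}}\ \le\ \mfrak{K}_{\mcal{W}}\ =\ \mfrak{K}_{\mcal{T}_1}\ \le\ \mfrak{K}_{\mcal{M}},
\]
so every term coincides; in particular $\mfrak{K}_{\mcal{M}}=\mfrak{K}_{\inner{\mcal{M}}}$.

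The only delicate point I anticipate is the bookkeeping behind the two iterations: Theorems~\ref{thm:K_properties:product} and~\ref{thm:K_properties:tensor_product} are two-factor statements, so I must group the factors, e.g.\ write $\mcal{T}_1=\pars{\mcal{V}_1\cdots\mcal{V}_{M-1}}\cdot\mcal{V}_M$ and verify that the group $\mcal{V}_1\cdots\mcal{V}_{M-1}$ is still stochastically independent of $\mcal{V}_M$ before invoking the induction hypothesis, and similarly check the associativity of $\otimes$ on linear spaces. Beyond that the argument is just a chain of monotonicity and closure identities, so I expect no further obstacles.
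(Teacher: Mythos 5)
Your proposal is correct and follows essentially the same route as the paper: both arguments hinge on $\mfrak{K}_{\mcal{T}_1}=\mfrak{K}_{\mcal{V}_1}\cdots\mfrak{K}_{\mcal{V}_M}=\mfrak{K}_{\mcal{V}_1\otimes\cdots\otimes\mcal{V}_M}$ via the product and tensor-product rules, combined with $\mfrak{K}_{\mcal{T}_1}\le\mfrak{K}_{\cl\pars{\mcal{M}}}=\mfrak{K}_{\mcal{M}}$ and monotonicity. Your sandwiching through $\mcal{W}$ merely sidesteps the paper's direct identification $\mfrak{K}_{\mcal{V}_1\otimes\cdots\otimes\mcal{V}_M}=\mfrak{K}_{\inner{\mcal{M}}}$, which is a cosmetic difference.
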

\begin{proof}
    Since $\operatorname{cl}\pars{\mcal{M}}\supseteq\mcal{T}_1$, Theorem~\ref{thm:K_properties:closure} and~\ref{thm:K_properties:union} imply
    \begin{equation}
        \mfrak{K}_{\mcal{M}} = \mfrak{K}_{\operatorname{cl}\pars{\mcal{M}}}\ge \mfrak{K}_{\mcal{T}_1} \label{eq:KM=KT1}
    \end{equation}
    and by Theorem~\ref{thm:K_properties:product} and \ref{thm:K_properties:tensor_product}, it holds that 
    \begin{equation}
        \mfrak{K}_{\mcal{T}_1} = \mfrak{K}_{\mcal{V}_1\cdots\mcal{V}_M} = \mfrak{K}_{\mcal{V}_1} \cdots \mfrak{K}_{\mcal{V}_M} = \mfrak{K}_{\mcal{V}_1\otimes\cdots\otimes\mcal{V}_M} = \mfrak{K}_{\inner{\mcal{M}}} . \label{eq:KT1=KV}
    \end{equation}
	Combining equation~\eqref{eq:KM=KT1} and~\eqref{eq:KT1=KV} and employing Theorem~\ref{thm:K_properties:union} a final time yields the chain of inequalities
    $\mfrak{K}_{\inner{\mcal{M}}} \ge \mfrak{K}_{\mcal{M}} \ge \mfrak{K}_{\mcal{T}_1} = \mfrak{K}_{\inner{\mcal{M}}}$,
    which concludes the proof.
\end{proof}


\section{Restriction to neighbourhoods} \label{sec:local_variation_constant}

Theorem~\ref{thm:K_TN} illustrates a problem that is not specific to the model class of tensor networks.
Since Theorem~\ref{thm:P_RIP} provides a worst-case bound for the entire model class, it must take into account every single element of $A = \braces{u_{\mcal{M}}} - \mcal{M}$,
even those that are very irregular and far away from $u_{\mcal{M}}$.
%
In light of monotonicity of $\mfrak{K}_{\bullet}$ (Theorem~\ref{thm:K_properties:union}), it stands to reason that this problem could be eliminated by restricting the model class $\mcal{M}$ to a small neighbourhood $\mcal{N}\subseteq\mcal{M}$ of $u_{\mcal{M}}$.

For model classes that can be locally linearised in $u_{\mcal{M}}$ we can compute upper and lower bounds for $\mfrak{K}_{\braces{u_{\mcal{M}}} - \mcal{N}}$.
To formalise this we utilise the concept of reach.
First introduced by Federer~\cite{federer_1959_curvature_measures}, the local reach $\rch\pars{A, v}$ of a set $A\subseteq\mcal{V}$ at a point $v\in A$ is the largest number such that any point at distance less than $\rch\pars{A, v}$ from $v$ has a unique nearest point in $A$.
\begin{definition}
    For any subset $A\subseteq\mcal{V}$, let $P_A : \mcal{V} \to A$ be the best approximation operator $P_A v := \argmin_{a\in A} \norm{v - a}$ and denote by $\operatorname{dom}\pars{P_A}$ the domain on which it is uniquely defined.
    Then $\rch\pars{A, v} := \sup\braces{r\ge 0 : B\pars{v, r}\subseteq\operatorname{dom}\pars{P_A}}$.
\end{definition}
Intuitively, if the local reach $\rch\pars{A,v}$ of a set $A$ is larger than $R > 0$ at every point $v\in A$, then a ball of radius $R$ can roll freely around $A$ without ever getting stuck~\cite{cuevas_2012_rolling}.
%
%
Moreover, if $A$ is a differentiable manifold, then the rolling ball interpretation of the reach also provides a measure of the curvature for $A$ that generalises the concept of an osculating circle for planar curves.
The distance of the manifold $A$ to its tangent space $v + \mbb{T}_vA$ is hence inversely proportional to the reach.

\begin{proposition} \label{prop:nhood_properties}
    Let $\mcal{N}$ be a manifold, $u\in\mcal{N}$ and assume that $R := \rch\pars{\mcal{N}, u} > 0$.
    Then there exists $0 < r^\ast \le R$ such that the following two statements hold.
    \begin{thmenum}
    \item For all $r\in\pars{0,\infty}$ and every $v\in\mcal{N}\cap B\pars{u, r}$ there exists $w\in \pars{u+\mbb{T}_u\mcal{N}}\cap B\pars{u, r}$ with $\norm{v - w} \le \frac{\norm{u-v}^2}{2R}$.
	\label{prop:nhood_properties:tangent_space_projection}
    \item For all $r\in\pars{0,r^\star}$ and every $w\in \pars{u+\mbb{T}_u\mcal{N}}\cap B\pars{u, r}$ there exists $v\in \mcal{N}\cap B\pars{u, r}$ with $\norm{w - v} \le \frac{\norm{u-w}^2}{2R}$.
	\label{prop:nhood_properties:nhood_projection}
    \end{thmenum}
\end{proposition}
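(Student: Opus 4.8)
The two assertions are the familiar statement that a set of positive reach and its affine tangent space agree to second order, with the deviation controlled by $1/R$; I would prove both from scratch, using essentially only the definition of $\rch$.

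For \ref{prop:nhood_properties:tangent_space_projection} the key lemma is the \emph{normal inequality}: for every $v\in\mcal{N}$ and every unit vector $\nu\in\mbb{T}_u^\perp\mcal{N}$ one has $\inner{v-u,\nu}\le\tfrac{1}{2R}\norm{v-u}^2$. This is immediate from the definition of reach: for $0<t<R$ the point $u+t\nu$ lies in $\dom\pars{P_{\mcal{N}}}$ with $P_{\mcal{N}}\pars{u+t\nu}=u$, hence $\norm{u+t\nu-v}\ge t=\norm{u+t\nu-u}$; squaring and cancelling $t^2$ gives $\norm{u-v}^2+2t\inner{u-v,\nu}\ge0$, i.e.\ $\inner{v-u,\nu}\le\norm{u-v}^2/(2t)$, and letting $t\uparrow R$ proves it. Given $v\in\mcal{N}\cap B\pars{u,r}$, I then take $w:=u+\Pi\pars{v-u}$, where $\Pi$ denotes the orthogonal projection onto $\mbb{T}_u\mcal{N}$; then $w\in u+\mbb{T}_u\mcal{N}$, and since $\Pi$ is a contraction fixing $0$ we get $\norm{w-u}\le\norm{v-u}<r$, so $w\in B\pars{u,r}$. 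Writing $v-w=\norm{v-w}\nu$ with $\nu\in\mbb{T}_u^\perp\mcal{N}$ a unit vector, the normal inequality yields $\norm{v-w}=\inner{v-u,\nu}\le\norm{v-u}^2/(2R)$. No restriction on $r$ is used, which matches the statement.

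For \ref{prop:nhood_properties:nhood_projection} I first fix $r^\star$. Because $\mcal{N}$ is a manifold of positive reach it is locally $\mcal{C}^{1,1}$, its tangent spaces vary Lipschitz-continuously near $u$, and $B\pars{u,\varrho}\subseteq\dom\pars{P_{\mcal{N}}}$ for every $\varrho<R$; moreover, if $v'\in\mcal{N}\cap B\pars{u,\varrho}$ then $B\pars{v',R-\varrho}\subseteq B\pars{u,R}\subseteq\dom\pars{P_{\mcal{N}}}$, so $\rch\pars{\mcal{N},v'}\ge R-\varrho$. Choose $r^\star\in\pars{0,R}$ small enough that all of this holds on $B\pars{u,2r^\star}$. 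For $r<r^\star$ and $w\in\pars{u+\mbb{T}_u\mcal{N}}\cap B\pars{u,r}$ I would take $v:=\gamma\pars{\norm{w-u}}$, the endpoint of the unit-speed geodesic $\gamma$ of $\mcal{N}$ with $\gamma\pars{0}=u$ and $\gamma'\pars{0}=\pars{w-u}/\norm{w-u}$ (defined on $[0,r^\star]$ by the choice of $r^\star$). Then $\norm{v-u}$ is at most the arclength $\norm{w-u}<r$, so $v\in\mcal{N}\cap B\pars{u,r}$, and since $\gamma$ is a geodesic its acceleration is purely normal, $\gamma''\pars{\sigma}=\mathrm{II}_{\gamma\pars{\sigma}}\bigl(\gamma'\pars{\sigma},\gamma'\pars{\sigma}\bigr)$ with $\norm{\gamma''\pars{\sigma}}\le 1/\rch\pars{\mcal{N},\gamma\pars{\sigma}}$. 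Integrating,
\[
  v-w=\gamma\bigl(\norm{w-u}\bigr)-u-\norm{w-u}\,\gamma'\pars{0}=\int_0^{\norm{w-u}}\bigl(\norm{w-u}-\sigma\bigr)\gamma''\pars{\sigma}\dx[\sigma],
\]
so that $\norm{v-w}\le\tfrac12\norm{w-u}^2\,\sup_{\sigma}\norm{\gamma''\pars{\sigma}}$.

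The delicate point, on which I expect to spend the most care, is squeezing this last bound down to exactly the stated constant $1/(2R)$. The naive estimate only gives $\rch\pars{\mcal{N},\gamma\pars{\sigma}}\ge R-\sigma$, hence $\norm{v-w}\le\norm{w-u}^2/\bigl(2\bigl(R-\norm{w-u}\bigr)\bigr)$, which is slightly too large. One closes the gap either by using that, for the smooth manifolds of interest, the curvature bound $\norm{\gamma''\pars{\sigma}}\le 1/R$ holds at $\sigma=0$ and degrades only by $o\pars{1}$ as $\norm{w-u}\to0$, so that the constant $1/(2R)$ is recovered after possibly shrinking $r^\star$ — this is precisely the slack that the undetermined $r^\star$ and the non-strict inequality in the conclusion are there to absorb — or by replacing the geodesic construction with $v:=P_{\mcal{N}}\pars{w}$ and applying the normal inequality at $v$ against the point $u\in\mcal{N}$, using the decomposition $u-v=\pars{u-w}+\pars{w-v}$ with $u-w\in\mbb{T}_u\mcal{N}$ and $w-v\in\mbb{T}_v^\perp\mcal{N}$ nearly orthogonal (their inner product being $O\pars{\norm{u-v}/R}$ by the Lipschitz variation of the tangent spaces). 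Both routes amount to the same second-order bookkeeping, and in both cases checking that $v$ does not escape $B\pars{u,r}$ near the boundary sphere is the final thing to verify.
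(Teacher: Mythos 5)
Your proof of \ref{prop:nhood_properties:tangent_space_projection} is correct and is essentially the paper's argument in algebraic clothing: the normal inequality $\inner{v-u,\nu}\le\norm{v-u}^2/(2R)$ is exactly the statement that the open tangent ball of radius $R$ at $u$ misses $\mcal{N}$, which is the fact the paper extracts geometrically from $\rch\pars{\mcal{N},u}=R$ (one caveat: $P_{\mcal{N}}\pars{u+t\nu}=u$ for $t<R$ is Federer's Theorem~4.8(12), not literally ``immediate from the definition'', but it is standard and the paper leans on the same fact).

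For \ref{prop:nhood_properties:nhood_projection}, however, there is a genuine gap, and you have correctly located it but not closed it: neither of your two routes produces the constant $\tfrac{1}{2R}$. Route A, integrated honestly with the only available curvature bound $\norm{\gamma''\pars{\sigma}}\le 1/\rch\pars{\mcal{N},\gamma\pars{\sigma}}\le 1/\pars{R-\sigma}$, gives $\int_0^s\pars{s-\sigma}\pars{R-\sigma}^{-1}\dx[\sigma]$, which is \emph{strictly larger} than $s^2/(2R)$ for every $s>0$; and replacing this by ``$1/R + o\pars{1}$'' and shrinking $r^\ast$ yields $\pars{1+o\pars{1}}s^2/(2R)$, which is still not $\le s^2/(2R)$ for any fixed $r>0$. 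Route B fares no better: with $v=P_{\mcal{N}}\pars{w}$ and $\mu=\pars{w-v}/\norm{w-v}$, the cross term $\inner{w-u,\mu}$ is controlled only by $\norm{w-u}\sin\angle\pars{\mbb{T}_u\mcal{N},\mbb{T}_v\mcal{N}}\approx\norm{u-w}^2/(2R)$, i.e.\ it is of the \emph{same order} as the main term, so the bookkeeping lands at roughly $\norm{u-w}^2/R$ --- a factor of $2$ too large. The missing idea is the paper's choice of where to look for $v$: neither along a geodesic nor along the orthogonal projection, but along the segment from $w$ to the center $c$ of the tangent ball $D$ of radius $R$ at $u$. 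Since $\mcal{N}$ avoids the interior of $D$, the point $v$ where $\overline{cw}$ meets $\mcal{N}$ lies between $w$ and the point $x$ where the segment exits $D$, whence $\norm{w-v}\le\norm{w-c}-R=\sqrt{R^2+\norm{u-w}^2}-R\le\norm{u-w}^2/(2R)$ exactly, by concavity. The role of $r^\ast$ in the paper is also different from yours: it is the radius up to which the orthogonal projection of $\mcal{N}$ onto $u+\mbb{T}_u\mcal{N}$ is onto, which is what guarantees the segment $\overline{cw}$ actually meets $\mcal{N}$. Without some replacement for this radial construction, your write-up of the second part would not reach the stated constant.
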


A very elegant proof of this statement that relies only on fundamental geometric arguments can be found in Appendix~\ref{proof:prop:nhood_properties}.
With all of this at our disposal we can now provide a first upper bound for the variation function $\mfrak{K}_{\braces{u_{\mcal{M}}} - \mcal{N}}$.

\begin{theorem}\label{thm:Kloc_upper}
    \newcommand{\niceSqrt}[1]{\sqrt{\smash[b]{#1}\vphantom{\mbb{T}_{u_{\mcal{M}}}}}}
    Let $r>0$ and $\mcal{N} \subseteq \mcal{M} \cap B\pars{u_{\mcal{M}}, r}$ be a manifold with $R := \rch\pars{\mcal{N}, u_{\mcal{M}}} \ge r$.
    Then
    \begin{equation}
        \mfrak{K}_{\braces{u_{\mcal{M}}} - \mcal{N}} \le \pars*{\niceSqrt{\mfrak{K}_{\mbb{T}_{u_{\mcal{M}}}\mcal{N}}} + \frac{r}{2R}\niceSqrt{\mfrak{K}_{\mbb{T}^\perp_{u_{\mcal{M}}}\mcal{N}}}}^2 .
    \end{equation}
\end{theorem}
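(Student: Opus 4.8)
The strategy is to decompose any unit-normalized element of $\braces{u_{\mcal{M}}} - \mcal{N}$ into a tangential part and a small normal correction using Proposition~\ref{prop:nhood_properties:tangent_space_projection}, bound the pointwise value of each part by the corresponding variation function, and then combine the two bounds via the triangle inequality in the form of a weighted sum of square roots. Concretely, fix $y\in Y$ and let $a = u_{\mcal{M}} - v$ with $v\in\mcal{N}$, $a\ne 0$. By Proposition~\ref{prop:nhood_properties:tangent_space_projection} (applicable since $\mcal{N}\subseteq B\pars{u_{\mcal{M}},r}$ and $R\ge r$) there is $w\in\pars{u_{\mcal{M}}+\mbb{T}_{u_{\mcal{M}}}\mcal{N}}\cap B\pars{u_{\mcal{M}},r}$ with $\norm{v-w}\le\frac{\norm{u_{\mcal{M}}-v}^2}{2R}$. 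Write $a = (u_{\mcal{M}} - w) + (w - v)$, where $u_{\mcal{M}} - w \in \mbb{T}_{u_{\mcal{M}}}\mcal{N}$ and $w - v$ should be shown to lie (essentially) in the normal space $\mbb{T}^\perp_{u_{\mcal{M}}}\mcal{N}$ — or at least to have its pointwise absolute value controlled by $\niceSqrt{\mfrak{K}_{\mbb{T}^\perp_{u_{\mcal{M}}}\mcal{N}}}$ up to the correct factor.

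\textbf{Key steps.} First I would set up the pointwise estimate: for the chosen decomposition $a = a_{\mathrm{T}} + a_{\mathrm{N}}$ with $a_{\mathrm{T}} := u_{\mcal{M}} - w$ and $a_{\mathrm{N}} := w - v$, we have $\abs{a(y)} \le \abs{a_{\mathrm{T}}(y)} + \abs{a_{\mathrm{N}}(y)}$. By definition of $\mfrak{K}$, $\abs{a_{\mathrm{T}}(y)} \le \norm{a_{\mathrm{T}}}\,\niceSqrt{\mfrak{K}_{\mbb{T}_{u_{\mcal{M}}}\mcal{N}}(y)}$ and $\abs{a_{\mathrm{N}}(y)} \le \norm{a_{\mathrm{N}}}\,\niceSqrt{\mfrak{K}_{\mbb{T}^\perp_{u_{\mcal{M}}}\mcal{N}}(y)}$ — this is where the orthogonality issue must be dealt with: $w - v$ need not be exactly orthogonal to $\mbb{T}_{u_{\mcal{M}}}\mcal{N}$, so I would either replace $w$ by the orthogonal projection of $v$ onto $u_{\mcal{M}}+\mbb{T}_{u_{\mcal{M}}}\mcal{N}$ (for which Proposition~\ref{prop:nhood_properties} still gives the quadratic bound on the residual, since the orthogonal projection is the nearest point in the affine tangent space and hence closer than the $w$ produced in the proof of the proposition) or absorb the tangential component of $w-v$ into $a_{\mathrm{T}}$. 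Next, normalize: since $\norm{a} = 1$ we get $\norm{a_{\mathrm{T}}} \le \norm{a} + \norm{a_{\mathrm{N}}} \le 1 + \frac{1}{2R}$... but more importantly $\norm{a_{\mathrm{N}}} = \norm{w - v} \le \frac{\norm{u_{\mcal{M}} - v}^2}{2R}$. The remaining arithmetic task is to relate $\norm{u_{\mcal{M}} - v}$ and $\norm{a_{\mathrm{T}}}$ to $\norm{a} = 1$: since $v\in B\pars{u_{\mcal{M}},r}$ we have $\norm{u_{\mcal{M}} - v} \le r$, hence $\norm{a_{\mathrm{N}}} \le \frac{r\,\norm{u_{\mcal{M}}-v}}{2R} \le \frac{r}{2R}$ after also using $\norm{u_{\mcal{M}}-v} = \norm{a} \cdot (\text{something} \le 1)$; and $\norm{a_{\mathrm{T}}} \le 1$ should follow from the Pythagorean relation $\norm{a}^2 = \norm{a_{\mathrm{T}}}^2 + \norm{a_{\mathrm{N}}}^2$ once the correction is taken orthogonal. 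Plugging these into the pointwise bound gives $\abs{a(y)} \le \niceSqrt{\mfrak{K}_{\mbb{T}_{u_{\mcal{M}}}\mcal{N}}(y)} + \frac{r}{2R}\niceSqrt{\mfrak{K}_{\mbb{T}^\perp_{u_{\mcal{M}}}\mcal{N}}(y)}$; squaring and taking the supremum over $a\in U\pars{\braces{u_{\mcal{M}}}-\mcal{N}}$ yields the claim.

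\textbf{Main obstacle.} The delicate point is the orthogonal decomposition. Proposition~\ref{prop:nhood_properties:tangent_space_projection} as stated only guarantees \emph{some} point $w$ in the affine tangent space with the quadratic distance bound, not that $w - v \perp \mbb{T}_{u_{\mcal{M}}}\mcal{N}$, and it also keeps $w$ inside $B\pars{u_{\mcal{M}},r}$ rather than asserting it is the nearest point. I expect the cleanest fix is to take $w := u_{\mcal{M}} + \Pi\pars{v - u_{\mcal{M}}}$ where $\Pi$ is the $L^2$-orthogonal projection onto $\mbb{T}_{u_{\mcal{M}}}\mcal{N}$; then $a_{\mathrm{N}} = w - v$ is genuinely orthogonal to $\mbb{T}_{u_{\mcal{M}}}\mcal{N}$ so $a_{\mathrm{N}}\in\mbb{T}^\perp_{u_{\mcal{M}}}\mcal{N}$, the Pythagorean identity $\norm{a_{\mathrm{T}}}^2 + \norm{a_{\mathrm{N}}}^2 = \norm{a}^2 = 1$ holds exactly (giving $\norm{a_{\mathrm{T}}}\le 1$), and $\norm{a_{\mathrm{N}}}$ is no larger than the distance to the $w$ furnished by the proposition, hence still bounded by $\frac{\norm{u_{\mcal{M}}-v}^2}{2R} \le \frac{r}{2R}\norm{a}$. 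One then has to check that this orthogonally-projected $w$ still lies in $B\pars{u_{\mcal{M}},r}$, which is immediate since orthogonal projection onto a subspace through the base point is $1$-Lipschitz and nonexpansive: $\norm{w - u_{\mcal{M}}} = \norm{\Pi(v - u_{\mcal{M}})} \le \norm{v - u_{\mcal{M}}} < r$. Everything else is bookkeeping with the triangle inequality and the definition of $\mfrak{K}$.
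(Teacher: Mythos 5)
Your proposal is correct and follows essentially the same route as the paper: the paper's proof also decomposes $u_{\mcal{M}}-v$ via the orthogonal projections $P$ onto $\mbb{T}_{u_{\mcal{M}}}\mcal{N}$ and $Q=1-P$ (your ``cleanest fix''), uses Proposition~\ref{prop:nhood_properties:tangent_space_projection} only to bound the distance to the affine tangent space, which dominates the norm of the $Q$-component, and then combines the normalised pointwise bounds $\abs{\alpha_T}/\norm{u_{\mcal{M}}-v}\le 1$ and $\abs{\alpha_N}/\norm{u_{\mcal{M}}-v}\le r/(2R)$ by the triangle inequality before taking the supremum. The orthogonality concern you flag is resolved exactly as you suggest, so there is no gap.
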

\begin{proof}
    \newcommand{\tangentSpace}{\mbb{T}_{u_{\mcal{M}}}\mcal{N}}
	Recall that $\mfrak{K}_{\braces{u_{\mcal{M}}}-\mcal{N}} = \sup_{v\in\mcal{N}} \mfrak{K}_{\braces{u_{\mcal{M}}-v}}$.
    We start by bounding $\mfrak{K}_{\braces{u_{\mcal{M}}-v}}$ for a fixed $v\in\mcal{N}$.
    For this define
    \begin{equation}
        \begin{aligned}
        \alpha_T &= \norm{P \pars{u_{\mcal{M}} - v}}
        &\text{and}\quad
        b_T &= \alpha_T^{-1} P \pars{u_{\mcal{M}} - v} , \\
        \alpha_N &= \norm{Q \pars{u_{\mcal{M}} - v}}
        &\text{and}\quad
        b_N &= \alpha_N^{-1} Q \pars{u_{\mcal{M}} - v} ,
        \end{aligned}
    \end{equation}
    where $P$ denotes the orthogonal projection onto $\tangentSpace$ and $Q := 1 - P$.
    This means that $\pars{u_{\mcal{M}}-v}^2 = \pars{\alpha_Tb_T + \alpha_Nb_N}^2 \le \pars{\abs{\alpha_T}\abs{b_T} + \abs{\alpha_N}\abs{b_N}}^2$ and $\norm{u_{\mcal{M}} - v} = \sqrt{\alpha_T^2 + \alpha_N^2} \le r$.
	Since $\norm{u_{\mcal{M}}-v} \le r \le R$, Proposition~\ref{prop:nhood_properties:tangent_space_projection} implies $\abs{\alpha_N} \le \frac{\alpha_T^2 + \alpha_N^2}{2R}$ and we can bound
    \begin{equation}
        \frac{\abs{\alpha_T}}{\sqrt{\alpha_T^2 + \alpha_N^2}} \le 1
        \qquad\text{and}\qquad
        \frac{\abs{\alpha_N}}{\sqrt{\alpha_T^2 + \alpha_N^2}} \le \frac{r}{2R} .
    \end{equation}
    Consequently,
    \begin{align}
        \mfrak{K}_{\braces{u_{\mcal{M}}-v}}
        &= \frac{\pars{u_{\mcal{M}}-v}^2}{\norm{u_{\mcal{M}}-v}^2} \\
        &\le \pars{\abs{b_T} + \frac{r}{2R}\abs{b_N}}^2 \\
        &= \pars*{\sqrt{\mfrak{K}_{\braces{b_T}}} + \frac{r}{2R}\sqrt{\mfrak{K}_{\braces{b_N}}}}^2 \\
        &= \pars*{\sqrt{\mfrak{K}_{P \braces{u_{\mcal{M}}-v}}} + \frac{r}{2R}\sqrt{\mfrak{K}_{Q \braces{u_{\mcal{M}}-v}}}}^2 .
    \end{align}
    Taking the supremum over $v\in\mcal{N}$, we obtain
    \begin{align}
	    \mfrak{K}_{\braces{u_{\mcal{M}}}-\mcal{N}}
        &\le \pars*{\sqrt{\sup_{v\in\mcal{N}} \mfrak{K}_{P \braces{u_{\mcal{M}}-v}}} + \frac{r}{2R}\sqrt{\sup_{v\in\mcal{N}} \mfrak{K}_{Q \braces{u_{\mcal{M}}-v}}}}^2 \\
        &= \pars*{\sqrt{\mfrak{K}_{P \pars{\braces{u_{\mcal{M}}}-\mcal{N}}}} + \frac{r}{2R}\sqrt{\mfrak{K}_{Q \pars{\braces{u_{\mcal{M}}}-\mcal{N}}}}}^2
    \end{align}
    To conclude the proof, observe that $P \pars{\braces{u_{\mcal{M}}}-\mcal{N}} = \tangentSpace$ and $Q \pars{\braces{u_{\mcal{M}}}-\mcal{N}} \subseteq \mbb{T}^\perp_{u_{\mcal{M}}}\mcal{M}$.
\end{proof}

This theorem provides an upper bound to the variation function in a neighbourhood of the best approximation $u_{\mcal{M}}$ and supports the intuition that the probability of finding the best approximation increases when the search is restricted to a small neighbourhood of the best approximation.


We now prove that the bound from Theorem~\ref{thm:Kloc_upper} is sharp when the the radius $r$ of the neighbourhood goes to zero.
This again follows from the fact that the distance of the manifold to its tangent space can be bounded with the help of the reach.
We start by formulating the following simple corollary from Proposition~\ref{prop:nhood_properties}.
\begin{corollary} \label{cor:convergence_M}
    Let $\mcal{N}$ be a manifold and $u\in\mcal{N}$.
    Moreover, assume that $R := \rch\pars{\mcal{N}, u} > 0$.
    Then, for any $r\le R$,
    \begin{equation}
        d_{\mathrm{H}}\pars{\mcal{N}\cap B\pars{u, r}, \pars{u+\mbb{T}_u\mcal{N}}\cap B\pars{u,r}} \le \frac{r^2}{2R} .
    \end{equation}
\end{corollary}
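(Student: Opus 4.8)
The plan is to expand the Hausdorff distance into its two one-sided halves and control each half by the corresponding statement of Proposition~\ref{prop:nhood_properties}. Write $X := \mcal{N}\cap B\pars{u,r}$ and $Y := \pars{u+\mbb{T}_u\mcal{N}}\cap B\pars{u,r}$, so that $d_{\mathrm{H}}\pars{X,Y} = \max\braces{\sup_{x\in X}\inf_{y\in Y}\norm{x-y},\ \sup_{y\in Y}\inf_{x\in X}\norm{x-y}}$. It therefore suffices to show that each of these two suprema is bounded by $\frac{r^2}{2R}$.

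For the first supremum, fix $v\in X=\mcal{N}\cap B\pars{u,r}$. Since $B\pars{u,r}$ is the open ball, $\norm{u-v}<r\le R$, so Proposition~\ref{prop:nhood_properties:tangent_space_projection} applies and produces $w\in\pars{u+\mbb{T}_u\mcal{N}}\cap B\pars{u,r}=Y$ with $\norm{v-w}\le\frac{\norm{u-v}^2}{2R}<\frac{r^2}{2R}$. Hence $\inf_{y\in Y}\norm{v-y}\le\frac{r^2}{2R}$, and passing to the supremum over $v\in X$ bounds the first half of $d_{\mathrm{H}}$ by $\frac{r^2}{2R}$. The second supremum is treated symmetrically, exchanging the roles of $\mcal{N}$ and $u+\mbb{T}_u\mcal{N}$: for $w\in Y$, Proposition~\ref{prop:nhood_properties:nhood_projection} yields $v\in\mcal{N}\cap B\pars{u,r}=X$ with $\norm{w-v}\le\frac{\norm{u-w}^2}{2R}<\frac{r^2}{2R}$, so the second half of $d_{\mathrm{H}}$ is bounded by $\frac{r^2}{2R}$ as well. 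Combining the two estimates gives $d_{\mathrm{H}}\pars{X,Y}\le\frac{r^2}{2R}$.

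The one point that needs care is that Proposition~\ref{prop:nhood_properties:nhood_projection} is stated only for radii below the threshold $r^\star\le R$, so the second half of the argument applies verbatim only for $r<r^\star$. For the remaining range $r^\star\le r\le R$ I would recover the bound pointwise: given $w\in Y$ with $\norm{u-w}<r^\star$, apply Proposition~\ref{prop:nhood_properties:nhood_projection} at a radius strictly between $\norm{u-w}$ and $r^\star$, which still returns some $v\in\mcal{N}\cap B\pars{u,r}$ obeying the same estimate; and one may further observe that the corollary is used downstream only in the regime $r\to 0$, where $r<r^\star$ holds automatically. I expect this bookkeeping around the radius restriction — rather than any genuine geometric difficulty — to be the main thing to get right, since all of the geometric content is already packaged in Proposition~\ref{prop:nhood_properties}.
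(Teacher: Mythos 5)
Your proof is correct and follows essentially the same route as the paper: the paper likewise unfolds the Hausdorff distance into its two one-sided statements and invokes the two parts of Proposition~\ref{prop:nhood_properties}. Your caveat about the radius threshold $r^\star$ in Proposition~\ref{prop:nhood_properties:nhood_projection} is well spotted — the paper's own two-line proof silently ignores this restriction — so your extra bookkeeping is, if anything, more careful than the original.
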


Since both $\mcal{N}\cap B\pars{v,r}$ and $\pars{v+\mbb{T}_v\mcal{N}}\cap B\pars{v,r}$ are contained in the ball $B\pars{v,r}$, they have to converge with a rate of at least $r$.
The key message of the Corollary~\ref{cor:convergence_M} is hence the convergence at a faster rate of $r^2$.
This means that this convergence is preserved even after a rescaling of the sets.
This is made explicit with the help of the normalisation operator $U$ from equation~\eqref{eq:variation_constant} in the subsequent proposition.

\begin{proposition} \label{prop:convergence_UM}
    Let $\mcal{N}$ be a manifold and $u\in\mcal{N}$.
    Moreover, assume that $R=\rch\pars{\mcal{N}, u} > 0$.
    Then, for any $r\le R$,
    \begin{equation}
        d_{\mathrm{H}}\pars{U\pars{\mcal{N}\cap B\pars{u,r}-u}, U\pars{\mbb{T}_u\mcal{N}}} \le \frac{r}{R} .
    \end{equation}
\end{proposition}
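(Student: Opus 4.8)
The plan is to bound the two one-sided parts of the Hausdorff distance separately, transferring the second-order estimates of Proposition~\ref{prop:nhood_properties} to the normalised sets by means of the elementary inequality
\[
    \norm*{\frac{a}{\norm{a}} - \frac{b}{\norm{b}}} \le \frac{2\norm{a-b}}{\max\braces{\norm{a},\norm{b}}}
    \qquad\text{for all } a,b\in\mcal{V}\setminus\braces{0},
\]
which follows by writing $\tfrac{a}{\norm{a}}-\tfrac{b}{\norm{b}}=\tfrac{a-b}{\norm{a}}+b\bigl(\tfrac1{\norm{a}}-\tfrac1{\norm{b}}\bigr)$, applying the reverse triangle inequality, and symmetrising in $a$ and $b$. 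The decisive structural point is that in Proposition~\ref{prop:nhood_properties} the distance between a point $x$ and the tangent space is controlled by $\norm{x}^2/(2R)$, i.e.\ \emph{quadratically} in $\norm{x}$; after dividing by $\norm{x}$ in the inequality above this leaves a factor $\norm{x}/R\le r/R$. Note that the uniform bound $r^2/(2R)$ from Corollary~\ref{cor:convergence_M} is not enough for this, since elements of $\mcal{N}\cap B\pars{u,r}-u$ may be arbitrarily close to $0$; it is precisely the second-order estimate that is needed.

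\emph{First part} (bounding $\sup_{\hat x}\inf_{\hat y}$). Fix $x=v-u$ with $v\in\bigl(\mcal{N}\cap B\pars{u,r}\bigr)\setminus\braces{u}$, so $0<\norm{x}<r\le R$. Proposition~\ref{prop:nhood_properties:tangent_space_projection}, applied with radius $r$, yields $w\in\bigl(u+\mbb{T}_u\mcal{N}\bigr)\cap B\pars{u,r}$ with $\norm{v-w}\le\norm{x}^2/(2R)$; set $y:=w-u\in\mbb{T}_u\mcal{N}$. Then $\norm{y}\ge\norm{x}-\norm{x-y}\ge\norm{x}\bigl(1-\tfrac{r}{2R}\bigr)\ge\tfrac12\norm{x}>0$, so $y/\norm{y}\in U\pars{\mbb{T}_u\mcal{N}}$, and the normalisation inequality gives $\norm{x/\norm{x}-y/\norm{y}}\le 2\norm{x-y}/\norm{x}\le\norm{x}/R\le r/R$. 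Taking the supremum over such $x$ bounds the first one-sided distance by $r/R$.

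\emph{Second part} (bounding $\sup_{\hat y}\inf_{\hat x}$). Fix a unit vector $\hat y\in U\pars{\mbb{T}_u\mcal{N}}$ and, for all sufficiently small $s>0$ (small enough that Proposition~\ref{prop:nhood_properties:nhood_projection} applies at an admissible radius below $r^\star$ and that the resulting point still lies in $B\pars{u,r}$), put $w_s:=u+s\hat y$, so $\norm{w_s-u}=s$. Proposition~\ref{prop:nhood_properties:nhood_projection} provides $v_s\in\mcal{N}\cap B\pars{u,r}$ with $\norm{w_s-v_s}\le s^2/(2R)$; set $x_s:=v_s-u$. As before $\norm{x_s}\ge s-s^2/(2R)\ge s/2>0$, so $x_s/\norm{x_s}\in U\pars{\mcal{N}\cap B\pars{u,r}-u}$, and since $\hat y=\pars{s\hat y}/\norm{s\hat y}$ the normalisation inequality gives $\norm{x_s/\norm{x_s}-\hat y}\le 2\norm{x_s-s\hat y}/s\le s/R\to0$ as $s\to0$. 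Hence the second one-sided distance is $0$, and combining the two parts yields $d_{\mathrm{H}}\le\max\braces{r/R,0}=r/R$.

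\emph{Expected obstacle.} There is no deep difficulty; the only care needed is the bookkeeping around normalisation — guaranteeing that the approximating vectors $y$ and $x_s$ stay bounded away from $0$ (which the quadratic estimate supplies) and accommodating the restriction $r<r^\star$ in Proposition~\ref{prop:nhood_properties:nhood_projection} by passing to the limit $s\to0$ rather than working at the fixed radius $r$. Everything else reduces to the elementary inequality stated at the outset.
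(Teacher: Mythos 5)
Your proof is correct, and while it follows the same outline as the paper's (split $d_{\mathrm{H}}$ into its two one-sided parts and feed in the quadratic estimates of Proposition~\ref{prop:nhood_properties}), the execution differs in two genuine ways. First, you control the normalisation error by the algebraic inequality $\norm{a/\norm{a}-b/\norm{b}}\le 2\norm{a-b}/\max\braces{\norm{a},\norm{b}}$ applied at the original scale, exploiting the \emph{pointwise} bound $\norm{v-w}\le\norm{u-v}^2/(2R)$; the paper instead rescales both sets to radius $r$ via the homogeneity of $d_{\mathrm{H}}$ and bounds the extra term $\norm{\tilde w-\hat w}$ by a planar Pythagorean argument, which yields the same factor $2$ by a geometric rather than algebraic route. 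Your observation that the uniform bound $r^2/(2R)$ of Corollary~\ref{cor:convergence_M} would not suffice (because elements of $\mcal{N}\cap B\pars{u,r}-u$ can be arbitrarily close to $0$) identifies exactly why the second-order estimate matters. Second, for the direction $\sup_{\hat y}\inf_{\hat x}$ you let the scale $s\to 0$ and conclude that this one-sided distance is exactly $0$ (unit tangent vectors are limits of normalised secants), whereas the paper works at the fixed scale $r$ and obtains the bound $r/R$ there as well. Your limiting argument has the added benefit of cleanly absorbing the restriction $r<r^\star$ in Proposition~\ref{prop:nhood_properties:nhood_projection}, a point the paper's proof passes over when it invokes that statement at the radius $r\le R$ directly; the trade-off is that your bound on the second one-sided distance is not uniform in the way the paper's is, but for the stated conclusion $\max\braces{r/R,0}=r/R$ this costs nothing.
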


The proof of both statements can be found in Appendix~\ref{proof:prop:convergence_UM}.
Using this convergence and the continuity of the variation function, we can deduce the following corollary.

\begin{theorem} \label{thm:KU_manifold_limit}
    Let $\mcal{N} \subseteq \mcal{M}$ be a manifold and neighbourhood of $u_{\mcal{M}}$ and assume that $\rch\pars{\mcal{N}, u_{\mcal{M}}} > 0$.
    Then
    \begin{equation}
        \mfrak{K}_{\mbb{T}_{u_{\mcal{M}}}\mcal{N}} \le \mfrak{K}_{\braces{u_{\mcal{M}}} - \mcal{N}} .
    \end{equation}
\end{theorem}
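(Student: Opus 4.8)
The plan is to combine monotonicity of $\mfrak{K}_{\bullet}$, the geometric convergence of the rescaled neighbourhood to the tangent space furnished by Proposition~\ref{prop:convergence_UM}, and the Hausdorff-continuity of $\mfrak{K}_{\bullet}$ from Theorem~\ref{thm:K_properties:continuity:all_sets}. Throughout write $u := u_{\mcal{M}}$ and $R := \rch\pars{\mcal{N}, u} > 0$.

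First I would record two elementary consequences of the definition~\eqref{eq:variation_constant}: $\mfrak{K}_{\bullet}$ is invariant under negation, $\mfrak{K}_{-A} = \mfrak{K}_{A}$, and under normalisation of elements, $\mfrak{K}_{A} = \mfrak{K}_{U\pars{A}}$, because $U\pars{-A} = -U\pars{A}$, $U\pars{U\pars{A}} = U\pars{A}$, and $\abs{a\pars{y}}^2 = \abs{-a\pars{y}}^2$. Applying monotonicity (Theorem~\ref{thm:K_properties:union}) to $\mcal{N}\cap B\pars{u,r}\subseteq\mcal{N}$, together with $\braces{u} - \pars{\mcal{N}\cap B\pars{u,r}} = -\pars{\mcal{N}\cap B\pars{u,r} - u}$, then gives, for every $r > 0$,
\[
    \mfrak{K}_{\braces{u_{\mcal{M}}} - \mcal{N}}
    \ \ge\ \mfrak{K}_{\braces{u} - \pars{\mcal{N}\cap B\pars{u,r}}}
    \ =\ \mfrak{K}_{U\pars{\mcal{N}\cap B\pars{u,r} - u}} .
\]

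Next I would let $r\to 0$ in this inequality, whose left-hand side is independent of $r$. For $r\le R$ the sets $U\pars{\mcal{N}\cap B\pars{u,r} - u}$ and $U\pars{\mbb{T}_u\mcal{N}}$ all lie on the unit sphere $S\pars{0,1}$, hence inside a set of the form $\mcal{V}_{w,\infty}\setminus B\pars{0,\varrho}$ for some $\varrho > 0$, on which $\mfrak{K}_{\bullet}$ is continuous with respect to the Hausdorff pseudometric by Theorem~\ref{thm:K_properties:continuity:all_sets}. Since Proposition~\ref{prop:convergence_UM} gives $d_{\mathrm{H}}\pars{U\pars{\mcal{N}\cap B\pars{u,r} - u},\, U\pars{\mbb{T}_u\mcal{N}}} \le r/R \to 0$, continuity yields $\mfrak{K}_{U\pars{\mcal{N}\cap B\pars{u,r} - u}} \to \mfrak{K}_{U\pars{\mbb{T}_u\mcal{N}}} = \mfrak{K}_{\mbb{T}_u\mcal{N}}$ in $\mcal{V}_{w^2,\infty}$, and in particular $\rho$-almost-everywhere pointwise; passing to the limit in the displayed inequality produces $\mfrak{K}_{\braces{u_{\mcal{M}}} - \mcal{N}} \ge \mfrak{K}_{\mbb{T}_{u_{\mcal{M}}}\mcal{N}}$.

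The only non-routine ingredient is Proposition~\ref{prop:convergence_UM}, which is already in hand; everything else is bookkeeping — checking that normalisation commutes with $\mfrak{K}_{\bullet}$ and that the normalised sets lie in the continuity domain of $\mfrak{K}_{\bullet}$. The single point that needs a word of care is that $\mcal{N}\cap B\pars{u,r}$ contains points other than $u$ for small $r$, so that $U\pars{\mcal{N}\cap B\pars{u,r}-u}$ is non-empty and Proposition~\ref{prop:convergence_UM} is non-vacuous; this is automatic because $\mcal{N}$ is a positive-dimensional manifold through $u$ (and the statement is vacuous otherwise).
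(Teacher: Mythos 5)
Your argument is correct and follows essentially the same route as the paper: monotonicity of $\mfrak{K}_{\bullet}$ reduces to the neighbourhood $\mcal{N}\cap B\pars{u_{\mcal{M}},r}$, Proposition~\ref{prop:convergence_UM} gives Hausdorff convergence of the normalised sets to $U\pars{\mbb{T}_{u_{\mcal{M}}}\mcal{N}}$, and continuity of the variation function with respect to the Hausdorff pseudometric lets you pass to the limit. The only cosmetic difference is that you invoke the continuity of $\mfrak{K}_{\bullet}$ on sets bounded away from the origin together with the idempotence of $U$, whereas the paper applies the continuity of the factor $F=\operatorname{sq}\circ\sup\circ\operatorname{abs}$ directly to the already-normalised sets; these amount to the same lemmas.
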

\begin{proof}
    We start by proving that 
    \begin{equation} \label{eq:KU_manifold_limit}
        \lim_{r\to 0} \mfrak{K}_{\braces{u_{\mcal{M}}} - \mcal{N}\cap B\pars{u_{\mcal{M}}, r}} = \mfrak{K}_{\mbb{T}_{u_{\mcal{M}}}\mcal{N}} .
    \end{equation}
    Recall from the proof of Theorem~\ref{thm:K_properties:continuity:compact_sets}--\ref{thm:K_properties:continuity:cones} in Appendix~\ref{proof:thm:K_properties}, that $\mfrak{K}_{\bullet} = F\circ\,U$, where $F = \operatorname{sq}\circ \sup\circ \operatorname{abs}$ and $\operatorname{sq}$, $\sup$, and $\operatorname{abs}$ are defined in Equations~\eqref{eq:sq}--\eqref{eq:sup}\vphantom{\eqref{eq:sq}\eqref{eq:abs}\eqref{eq:sup}}.
    By Lemmas~\ref{lem:abs_continuous}--\ref{lem:sq_continuous}\noeqref{eq:abs} (also stated in Appendix~\ref{proof:thm:K_properties}) $F : \mfrak{P}\pars{\mcal{V}_{w,\infty}} \to \mcal{V}_{w^2,\infty}$ is continuous with respect to the Hausdorff pseudometric.
    The continuity of $F$ and Proposition~\ref{prop:convergence_UM} then imply 
    \[
        \lim_{r\to 0} F\pars{U\pars{\mcal{N}\cap B\pars{u_{\mcal{M}},r}-u_{\mcal{M}}}}
        = F\pars{\lim_{r\to 0} U\pars{\mcal{N}\cap B\pars{u_{\mcal{M}},r}-u_{\mcal{M}}}}
        = F\pars{U\pars{\mbb{T}_{u_{\mcal{M}}}\mcal{N}}} .
    \]
    Theorem~\ref{thm:K_properties:union} implies $\mfrak{K}_{\braces{u_{\mcal{M}}} - \mcal{N}\cap B\pars{u_{\mcal{M}}, r}} \le \mfrak{K}_{\braces{u_{\mcal{M}}} - \mcal{N}}$.
    After taking the limit $r\to 0$, the assertion follows from equation~\eqref{eq:KU_manifold_limit}.
\end{proof}

Theorems~\ref{thm:Kloc_upper} and~\ref{thm:KU_manifold_limit} reveal that the regularity of the model class in close proximity to the best approximation is captured entirely by the variation function of the tangent space.
This establishes $\mfrak{K}_{\mbb{T}_{u_\mcal{M}}\mcal{M}}$ as a generalised measure of \emph{incoherence} and allows us to provide a heuristic explanation for the practical success of many low-rank approximation algorithms.
\begin{remark} \label{rmk:restriction}
    Suppose that the initial guess $v_0$ satisfies $v_0 \in \mcal{N}$ for some neighbourhood $\mcal{N}$ of $u_{\mcal{M}}$.
    Then, with high probability, every iterate $v_k$ will satisfy $v_k \in \mcal{N}$,
    if the variation function $\mfrak{K}_{\braces{u_{\mcal{M}}} - \mcal{N}}$ is sufficiently small.
    The preceding theorem gives sufficient conditions for this to happen, namely that
    \begin{enumerate}
        \item the initial guess is close to the best approximation (the neighbourhood $\mcal{N}$ is small),
	\item the best approximation is sufficiently incoherent ($\norm{\mfrak{K}_{\mbb{T}_{u_{\mcal{M}}}\mcal{M}}}_{w,\infty}$ is small) and
        \item the local reach of the manifold is positive.
    \end{enumerate}
\end{remark}

The three assumptions in Remark~\ref{rmk:restriction} are necessary but challenging to ensure in practice.
Specifically, the first assumption depends significantly on the problem and the additional information available.
We conclude this section by discussion the validity of the remaining two assumptions for linear spaces, sparse vectors and low-rank matrices in Examples~\ref{ex:polynomial_regression}--\ref{ex:matrix_completion}.
For all three model classes we compute the local reach as well as the variation function of the tangent space.

\begin{example}[Linear spaces] \label{ex:polynomial_regression}
    \leavevmode\setlength{\parskip}{0px}
    Let $Y$ be some set and $\rho$ be a probability measure on $Y$.
    Moreover, let $\mcal{M}$ be the linear space spanned by the $L^2\pars{Y,\rho}$-orthonormal basis $\braces{b_k}_{k\in\bracs{d}}$ with $d\in\mbb{N}$.
    Since the best approximation in $\mcal{M}$ is unique for any element in $\mcal{V}$, it holds that $\rch\pars{\mcal{M}, v} = \infty$ for every $v\in\mcal{M}$.
    Furthermore, for any $v\in\mcal{M}$, it holds that $\mbb{T}_v\mcal{M} = \mcal{M}$.
    The variation function of any neighbourhood $\mcal{N}\subseteq\mcal{M}$ of $v$ is hence given by
    \begin{equation}
	    \mfrak{K}_{\braces{v}-\mcal{N}}
	    = \mfrak{K}_{\braces{v}-\mcal{M}}
        = \mfrak{K}_{\mbb{T}_v\mcal{M}}
        = \mfrak{K}_{\inner{b_1}\oplus\cdots\oplus\inner{b_d}}
        = b_1^2 + \cdots + b_d^2 .
    \end{equation}
	In the special case $Y=\bracs{-1,1}$ and $\rho=\frac{1}{2}\dx$ and for the basis of Legendre polynomials, this results in the lower bound $\norm{\mfrak{K}_{\mbb{T}_v\mcal{M}}}_{w,\infty} = d^2$ for the weight function $w\equiv 1$.
	Using the optimal weight function, given by Theorem~\ref{thm:optimal_weight_function}, yields the bound $\norm{\mfrak{K}_{\mbb{T}_v\mcal{M}}}_{w,\infty} = d$.
\end{example}

\begin{example}[Sparse vectors] \label{ex:sparse_regression}
    \leavevmode\setlength{\parskip}{0px}
    Let $Y$ be some set and $\rho$ be a probability measure on $Y$.
    For $d\in\mbb{N}\cup\braces{\infty}$, let $\braces{b_k}_{k\in\bracs{d}}$ be an $L^2\pars{Y,\rho}$-orthonormal basis and let $\braces{\omega_k}_{k\in\bracs{d}}$ satisfy $\omega_k \ge \norm{b_k}_{w,\infty}$ for all $k\in\bracs{d}$.
    For ease of notation we identify every $v\in\inner{b_1,\ldots,b_d}$ with its coefficient vector $v\in\mbb{R}^d$.
    The model class of $\omega$-weighted $s$-sparse functions can then be defined as
    \begin{equation}
        \mcal{M} := \braces*{v\in\inner{b_1,\ldots,b_d} \,:\, \norm{v}_{\omega,0} \le s}
        \qquad\text{with}\qquad
        \norm{v}_{\omega,0} := \sqrt{\textstyle\sum_{k\in\operatorname{supp}\pars{v}} \omega_k^2}
        \ .
    \end{equation}
	Now consider the neighbourhood $\mcal{N} := \mcal{M}\cap B\pars{v,r}$ for some $v\in\mcal{M}$ with $\norm{v}_{\omega,0}^2 + \omega_k^2 > s^2$ for all $k\in\bracs{d}\setminus\supp\pars{v}$ and $r\le \min \braces{\abs{v_k} \,:\, k\in\supp\pars{v}}$.
    Since
    \begin{equation}
        \mcal{N}
        = \braces{w\in\inner{b_1,\ldots,b_d} \,:\, \supp\pars{w}=\supp\pars{v}\text{ and }\norm{v - w}_{2}\le r}
    \end{equation}
    is a ball of radius $r$ in a $\abs{\supp\pars{v}}$-dimensional linear space, it holds that $\rch\pars{\mcal{N}, v} = \infty$.
    We have, furthermore, that $\mbb{T}_v\mcal{N} = \inner{b_k \,:\, k\in\supp\pars{v}}$ and hence
    \begin{equation}
        \textstyle
        \mfrak{K}_{\mbb{T}_v\mcal{N}}
        = \sum_{k\in\supp\pars{v}} \norm{b_k}_{L^\infty}^2
        \le \sum_{k\in\supp\pars{v}} \omega_k^2
        \le s^2 .
    \end{equation}
    In contrast to the previous example, this already yields a mild improvement over the bound $\mfrak{K}_{\braces{u_{\mcal{M}}} - \mcal{M}} \le \mfrak{K}_{\mcal{M} - \mcal{M}} \le 2s^2$.
	Moreover, we see that the condition that $\mcal{N}$ is a manifold with $\rch\pars{\mcal{N},v} > 0$ becomes non-trivial.
	If the $\omega$-weighted sparsity $s$ is not completely exhausted by $v$, then $\mcal{N}$ may not be a manifold or the best approximation in $\mcal{N}$ may not be uniquely defined and the local reach vanishes.
	This shows, that the sparsity of the sought function $u$ should not be overestimated, when defining the model class $\mcal{M}$.
\end{example}

\begin{example}[Low-rank matrices] \label{ex:matrix_completion}
    \leavevmode\setlength{\parskip}{0px} 
    Let $Y = Y_{\mathrm{L}}\times Y_{\mathrm{R}}$ be some set and $\rho = \rho_{\mathrm{L}}\otimes\rho_{\mathrm{R}}$ be a product probability measure on $Y$.
    For $m\in\braces{\mathrm{L}, \mathrm{R}}$, let $\braces{b_{m,k}}_{k\in\bracs{d}}$ be an $L^2\pars{Y_m,\rho_m}$-orthonormal basis and let $B_{kl} := b_{\mathrm{L},k}\otimes b_{\mathrm{R},l}$ be the corresponding tensor product basis.
    As in the previous example, we identify every $v\in\inner{B_{11},\ldots,B_{dd}}$ with its coefficient matrix $v\in\mbb{R}^{d\times d}$.
    Now we can define the model class
    \begin{equation}
        \mcal{M} := \braces{v\in\inner{B_{11},\ldots,B_{dd}} : \operatorname{rank}\pars{v}\le R}
    \end{equation}
    of functions with coefficient matrices of rank at most $R$.
    Let $v\in\mcal{M}$ be of rank $R$ and consider the neighbourhood $\mcal{N} := \mcal{M}\cap B\pars{v,r}$ for some $r\le \sigma_R\pars{v}$.
    Here, $\sigma_{k}\pars{v}$ denotes the $k$\textsuperscript{th} largest singular value of $v$ and $B\pars{v,r}$ denotes the $\norm{\bullet}$-ball of radius $r$ around $v$.
	Then $\mcal{N}$ corresponds to a $\norm{\bullet}_{\mathrm{Fro}}$-ball of radius $r$ in the $C^\infty$-manifold of rank-$R$ matrices and $\operatorname{rch}\pars{\mcal{N}, v} \ge \frac{r}{2}$.
	To see this, let $w$ be any matrix with $\norm{v-w}_{\mathrm{Fro}} \le \frac{r}{2}$ and observe that
    \begin{equation}
        \sigma_{R}\pars{w}
        \ge \sigma_{R}\pars{v} - \frac{r}{2}
        > 0
        \qquad\text{and}\qquad
        \sigma_{R}\pars{w} - \sigma_{R+1}\pars{w}
	    \ge \sigma_{R}\pars{v} - \frac{r}{\sqrt{2}}
        > 0 .
    \end{equation}
    This means, that $\operatorname{rank}\pars{w} \ge R$ and that its best rank-$R$ approximation, given by the truncated singular value decomposition, is uniquely defined.
    This approximation, denoted by $x$, lies in $\mcal{N}$, since $\norm{v-x} \le \norm{v-w} + \norm{w-x} \le \frac{r}{2} + \frac{r}{2}$.
    Let $v = w_{\mathrm{L}}^{\vphantom{\intercal}} \operatorname{diag}\pars{\sigma} w_{\mathrm{R}}^\intercal$ be the singular value decomposition of $v$.
    The tangent space at $v$ can then be written as $\mbb{T}_v\mcal{N} = \mcal{W}_{\mathrm{L}} \oplus \mcal{W}_{\mathrm{R}}$ with $\mcal{W}_{\mathrm{L}} := \inner{w_{\mathrm{L}}} \otimes \inner{b_{\mathrm{R},1}, \ldots, b_{\mathrm{R},d}}$ and $\mcal{W}_{\mathrm{R}} := \inner{w_{\mathrm{L}}}^\perp \otimes \inner{w_{\mathrm{R}}}$, where $\inner{w_{\mathrm{L}}}$ denotes the linear span of the columns of $w_{\mathrm{L}}$ and $\inner{w_{\mathrm{L}}}^\perp$ denotes its orthogonal complement in $\inner{b_{\mathrm{L},1}, \ldots, b_{\mathrm{L},d}}$.
    Thus
    \begin{align}
        \mfrak{K}_{\mbb{T}_v\mcal{N}}
        &= \mfrak{K}_{\mcal{W}_{\mathrm{L}}} + \mfrak{K}_{\mcal{W}_{\mathrm{R}}}, \\
        \mfrak{K}_{\mcal{W}_{\mathrm{L}}}
        &= \mfrak{K}_{\inner{w_{\mathrm{L}}}}\mfrak{K}_{\inner{b_{\mathrm{R},1},\ldots,b_{\mathrm{R},d}}}
        \text{ and} \\
        \mfrak{K}_{\mcal{W}_{\mathrm{R}}}
        &= \mfrak{K}_{\inner{w_{\mathrm{L}}}^\perp}\mfrak{K}_{\inner{w_{\mathrm{R}}}}
        \le \mfrak{K}_{\inner{b_{\mathrm{L},1},\ldots,b_{\mathrm{L},d}}}\mfrak{K}_{\inner{w_{\mathrm{R}}}} .
    \end{align}
    For the sake of simplicity, assume that $\norm{b_{m,k}}_{w,\infty} = 1$ for all $m\in\braces{\mathrm{L},\mathrm{R}}$ and $k\in\bracs{d}$.
    Then we obtain the bound $\mfrak{K}_{\mbb{T}_v\mcal{N}} \le d\pars{\mfrak{K}_{\inner{w_{\mathrm{L}}}} + \mfrak{K}_{\inner{w_{\mathrm{R}}}}}$.
    This shows that, when $R$ is fixed, the variation function $\mfrak{K}_{\mbb{T}_v\mcal{N}}$ grows only linearly with $d$ while $\mfrak{K}_{\mcal{M}}$ grows quadratically.
    This reduction is even more pronounced for model classes of higher order tensors.
    
    This is the first example which truely benefits from restricting the model class to a neighbourhood of the best approximation.
    But as in the previous example, we see that the condition that $\mcal{N}$ is a manifold with positive local reach is not trivial.
    If the rank of $v$ is strictly smaller than $R$, then $\mcal{N}$ may not be a manifold or the best approximation in $\mcal{N}$ may not be uniquely defined and the local reach vanishes.
    This shows that the rank of $u$ must not be overestimated, when defining the model class $\mcal{M}$.
    It is easy to see, that otherwise $\mfrak{K}_{\braces{u_{\mcal{M}}} - \mcal{N}} = \mfrak{K}_{\inner{\mcal{M}}}$ for all neighbourhoods $\mcal{N}$.
    Many successful algorithms for low-rank approximation already account for this by starting with an initial guess of rank $1$ and successively increasing the rank while testing for divergence on a validation set.
\end{example}

\section{Discussion}\label{sec:discussion}

A crucial point in this paper is the observation that any reasonable class of tensor networks exhibits the same variation function as the linear space in which it is embedded.
Theorem~\ref{thm:K_TN} indicates that a feasible empirical best approximation is not possible when considering the entire model class of tensor networks and motivates our investigation of the variation function in small neighbourhoods of the best approximation.

If the neighbourhood is a manifold with positive local reach, Theorems~\ref{thm:Kloc_upper} and~\ref{thm:KU_manifold_limit} establish the variation function of the tangent space as a generalised measure of incoherence.
Assuming that the best approximation is sufficiently incoherent with respect to this measure and that the curvature of the model class is sufficiently small, we can find a neighbourhood of the best approximation for which the variation function is small.
Restricting the optimisation to this neighbourhood ensures that a quasi-optimal empirical approximation can be found with high probability.
We argue that many gradient descent algorithms remain in such a neighbourhood, which provides a heuristic argument for their empirically observed success.

The incoherence and curvature conditions that are required by this argument are discussed in Remark~\ref{rmk:restriction} and investigated for three commonly used model classes in
Examples~\ref{ex:polynomial_regression},~\ref{ex:sparse_regression} and~\ref{ex:matrix_completion}.
These examples illustrate that the conditions
are not trivial to verify and unrealistic to guarantee in most practical settings.
Going forward, we hence advocate for exploring algorithms that automatically remain in a subclass of sufficiently small variation function.

{
    \emergencystretch=3em
    \printbibliography
}

\section*{Acknowledgements}
Our code made use of the following Python packages: 
\texttt{numpy}~\cite{numpy}, \texttt{scipy}~\cite{scipy}, and \texttt{matplotlib}~\cite{matplotlib}.

\appendix
\section{Appendix: Proof of Theorem~\ref{thm:K_properties}} \label{proof:thm:K_properties}

\begin{enumerate}
    \item Follows directly from the definition.
    
    \item To see that $\mfrak{K}_{A} = \mfrak{K}_{\cl\pars{A}}$ let $a\in\cl\pars{A}\setminus\braces{0}$.
    Then there exists a sequence $\braces{a_k} \in A\setminus\braces{0}$ such that $a_k \to a$.
    Due to the continuity of $a \mapsto a\pars{y}^2/\norm{\bullet}^2$ on $A\setminus\braces{0}$ it follows that
    \begin{equation}
        \mfrak{K}_{\braces{a}}\pars{y}
        = \frac{\abs{a\pars{y}}^2}{\norm{a}^2}
        = \lim_{k\to\infty} \frac{\abs{a_k\pars{y}}^2}{\norm{a_k}^2}
        = \lim_{k\to\infty} \mfrak{K}_{\braces{a_k}}\pars{y}
    \end{equation}
    And since $\mfrak{K}_{\braces{a_k}} \le \mfrak{K}_{A}$ for all $k=1,\ldots,\infty$ and we can conclude $\mfrak{K}_{\braces{a}} \le \mfrak{K}_{A}$.
    The assertion follows with \ref{thm:K_properties:union} since $\mfrak{K}_{A} \le \mfrak{K}_{\cl\pars{A}} = \sup_{a\in\cl\pars{A}}\mfrak{K}_{\braces{a}} \le \mfrak{K}_{A}$.

    \item[3.-5.] In all three case we can write $\mfrak{K}_{\bullet} = \operatorname{sq}\circ \sup\circ \operatorname{abs}\circ\, U$ with
    \begin{align}
        \operatorname{sq} &: \mcal{V}_{w,\infty} \to \mcal{V}_{w^2,\infty},
        & \operatorname{sq}\pars{v}\pars{y} &:= v\pars{y}^2, \label{eq:sq} \\
        \operatorname{abs} &: \mcal{V}_{w,\infty} \to \mcal{V}_{w,\infty},
        & \operatorname{abs}\pars{v}\pars{y} &:= \abs{v\pars{y}}, \label{eq:abs} \\
        \sup &: \mfrak{P}\pars{\mcal{V}_{w,\infty}} \to \mcal{V}_{w,\infty},
        & \sup\pars{V}\pars{y} &:= \sup_{v\in V} v\pars{y}, \text{ and} \label{eq:sup} \\
        \inf &: \mfrak{P}\pars{\mcal{V}_{w,\infty}} \to \mcal{V}_{w,\infty},
        & \inf\pars{V}\pars{y} &:= \inf_{v\in V} v\pars{y} .
    \end{align}
    This allows us to prove the continuity of $\operatorname{sq}\circ \sup\circ \operatorname{abs} : \mfrak{P}\pars{\mcal{V}_{w,\infty}} \to \mcal{V}_{w^2,\infty}$ and $U$ individually.
    The main difference between Theorems~\ref{thm:K_properties:continuity:compact_sets}, \ref{thm:K_properties:continuity:all_sets} and~\ref{thm:K_properties:continuity:cones} then comes from the domain of $U$.
    
    We proceed by showing that $\operatorname{sq}\circ \sup\circ \operatorname{abs} : \mfrak{P}\pars{\mcal{V}_{w,\infty}} \to \mcal{V}_{w^2,\infty}$ is continuous with respect to the Hausdorff pseudometric, which also implies the continuity of $\operatorname{sq}\circ \sup\circ \operatorname{abs} : \mfrak{C}\pars{\mcal{V}_{w,\infty}} \to \mcal{V}_{w^2,\infty}$ with respect to the Hausdorff metric.
    
    To do this we require the following four lemmata.
    \begin{lemma} \label{lem:continuity_implies_pseudo-Hausdorff_continuity}
        Let $\pars{M_1,d_1}$ and $\pars{M_2,d_2}$ be metric spaces, let $f:M_1\to M_2$ and define $f\pars{X} := \braces{ f\pars{x} : x\in X}$ for any $X\in M_1$.
        If $f$ is uniformly continuous, then $f : \mfrak{P}\pars{M_1}\to\mfrak{P}\pars{M_2}$ is uniformly continuous with respect to the Hausdorff pseudometric.
    \end{lemma}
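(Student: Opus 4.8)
The plan is to reduce the assertion to the plain $\varepsilon$--$\delta$ formulation of uniform continuity for the induced map $X\mapsto f(X)$ on power sets. Fix $\varepsilon>0$. Since $f$ is uniformly continuous there is $\delta>0$ such that $d_1(x,x')<\delta$ implies $d_2\pars{f(x),f(x')}<\varepsilon$ for all $x,x'\in M_1$. I claim that whenever $X,Y\subseteq M_1$ satisfy $d_{\mathrm{H}}(X,Y)<\delta$ we have $d_{\mathrm{H}}\pars{f(X),f(Y)}\le\varepsilon$; this is exactly uniform continuity of $f:\mfrak{P}(M_1)\to\mfrak{P}(M_2)$ with respect to the Hausdorff pseudometric.

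To prove the claim I would first dispose of the degenerate case: if $d_{\mathrm{H}}(X,Y)<\delta$ is finite then either both $X$ and $Y$ are empty, in which case $f(X)=f(Y)=\emptyset$ and there is nothing to show, or both are non-empty, which I henceforth assume. Fix $x\in X$. From $\sup_{x'\in X}\inf_{y\in Y}d_1(x',y)\le d_{\mathrm{H}}(X,Y)<\delta$ we get $\inf_{y\in Y}d_1(x,y)<\delta$, so by the definition of infimum there is a witness $y\in Y$ with $d_1(x,y)<\delta$, whence $d_2\pars{f(x),f(y)}<\varepsilon$. Therefore $\inf_{z\in f(Y)}d_2\pars{f(x),z}\le d_2\pars{f(x),f(y)}<\varepsilon$, and taking the supremum over $x\in X$ gives $\sup_{w\in f(X)}\inf_{z\in f(Y)}d_2(w,z)\le\varepsilon$. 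Exchanging the roles of $X$ and $Y$ and using the symmetric half of $d_{\mathrm{H}}(X,Y)<\delta$ yields the analogous bound for the other term, and combining the two shows $d_{\mathrm{H}}\pars{f(X),f(Y)}\le\varepsilon$, completing the argument.

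There is no genuinely hard step here; the only points requiring care are (i) passing from the strict inequality $\inf_{y\in Y}d_1(x,y)<\delta$ to an actual witness $y\in Y$ with $d_1(x,y)<\delta$, and (ii) the bookkeeping around empty sets and the value $+\infty$ of the pseudometric, which is why it suffices to treat the case $d_{\mathrm{H}}(X,Y)<\delta<\infty$. I would also note that the same computation in fact gives $d_{\mathrm{H}}\pars{f(X),f(Y)}\le\omega\pars{d_{\mathrm{H}}(X,Y)}$ for any modulus of continuity $\omega$ of $f$, a quantitative refinement that makes the uniform-continuity statement transparent and that can be reused when the continuity of $\operatorname{abs}$ and $\operatorname{sq}$ is established in the following lemmata.
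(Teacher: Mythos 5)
Your proof is correct and follows essentially the same route as the paper: pull back a point of $f(X)$ to a preimage in $X$, use $d_{\mathrm{H}}(X,Y)<\delta$ to find a $\delta$-close witness in $Y$, push forward through the uniform continuity of $f$, and symmetrize. Your extra care about extracting a witness from the infimum, the empty-set case, and the modulus-of-continuity refinement are welcome but do not change the argument.
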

    \begin{proof}
        Recall, that $d_{\mathrm{H}}\pars{X,Y} \le \varepsilon$ means that
        \begin{equation}
            \forall x\in X\,\exists y\in Y:\, d\pars{x,y} \le \varepsilon
            \qquad\text{and}\qquad
            \forall y\in Y\,\exists x\in X:\, d\pars{y,x} \le \varepsilon .
        \end{equation} 
        Let $\varepsilon>0$.
        Since $f$ is uniformly continuous there exists $\delta>0$ such that $d_1\pars{x,y}<\delta$ implies $d_2\pars{f\pars{x},f\pars{y}}<\varepsilon$.
        We now show that $d_{\mathrm{H}}\pars{U,V}<\delta$ implies $d_{\mathrm{H}}\pars{f\pars{U}, f\pars{V}}<\varepsilon$.

        For this let $f_u\in f\pars{U}$ and choose $u\in U$ such that $f\pars{u} = f_u$.
        Since $d_{\mathrm{H}}\pars{U,V}<\delta$ there exists $v\in V$ such that $d_1\pars{u,v}<\delta$ and consequently $d_2\pars{f\pars{u},f\pars{v}}<\varepsilon$, by uniform continuity.
        This means that for every $f_u\in f\pars{U}$ there exists $f_v \in f\pars{V}$ such that $d_2\pars{f_u, f_v} < \varepsilon$.
        Since this argument remains valid if the roles of $U$ and $V$ are reversed we can conclude that $d_{\mathrm{H}}\pars{f\pars{U}, f\pars{V}} < \varepsilon$.
    \end{proof}
    
    \begin{lemma} \label{lem:abs_continuous}
        $\operatorname{abs} : \mcal{V}_{w,\infty} \to \mcal{V}_{w,\infty}$ is Lipschitz continuous with constant $1$.
        $\operatorname{abs} : \mfrak{P}\pars{\mcal{V}_{w,\infty}} \to \mfrak{P}\pars{\mcal{V}_{w,\infty}}$ is uniformly continuous with respect to the Hausdorff pseudometric.
    \end{lemma}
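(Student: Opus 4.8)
The plan is to reduce the first assertion to the pointwise reverse triangle inequality on $\mbb{R}$, and then obtain the second assertion for free from Lemma~\ref{lem:continuity_implies_pseudo-Hausdorff_continuity}. First I would check that $\operatorname{abs}$ is well-defined as a map $\mcal{V}_{w,\infty}\to\mcal{V}_{w,\infty}$: for every $y\in Y$ one has $\abs{\operatorname{abs}\pars{v}\pars{y}} = \abs{v\pars{y}}$, hence $\norm{\operatorname{abs}\pars{v}}_{w,\infty} = \norm{v}_{w,\infty} < \infty$, so $\operatorname{abs}\pars{v}\in\mcal{V}_{w,\infty}$.

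For the Lipschitz estimate, fix $u,v\in\mcal{V}_{w,\infty}$. For $\rho$-almost every $y\in Y$ the ordinary reverse triangle inequality gives $\abs{\,\abs{v\pars{y}} - \abs{u\pars{y}}\,} \le \abs{v\pars{y} - u\pars{y}}$. Multiplying both sides by $\sqrt{w\pars{y}}\ge 0$ and taking the essential supremum over $y\in Y$ yields
$\norm{\operatorname{abs}\pars{v} - \operatorname{abs}\pars{u}}_{w,\infty} \le \norm{v-u}_{w,\infty}$,
which is exactly the claim that $\operatorname{abs}:\mcal{V}_{w,\infty}\to\mcal{V}_{w,\infty}$ is $1$-Lipschitz.

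The second assertion is then immediate: a $1$-Lipschitz map is in particular uniformly continuous, so Lemma~\ref{lem:continuity_implies_pseudo-Hausdorff_continuity}, applied with $f = \operatorname{abs}$ and $M_1 = M_2 = \mcal{V}_{w,\infty}$, shows that the induced set map $\operatorname{abs}:\mfrak{P}\pars{\mcal{V}_{w,\infty}}\to\mfrak{P}\pars{\mcal{V}_{w,\infty}}$ is uniformly continuous with respect to the Hausdorff pseudometric.

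There is essentially no real obstacle here; the only place that requires a little care is that every inequality above is an inequality between essential suprema, so one must argue on a set of full $\rho$-measure and use nonnegativity of $\sqrt{w}$ — this is precisely where the weighted-$L^\infty$ structure of $\norm{\bullet}_{w,\infty}$ enters. Everything else is bookkeeping.
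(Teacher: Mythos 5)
Your proof is correct and follows exactly the same route as the paper: the pointwise reverse triangle inequality gives the $1$-Lipschitz bound for $\operatorname{abs}$ on $\mcal{V}_{w,\infty}$, and the set-level statement then follows from Lemma~\ref{lem:continuity_implies_pseudo-Hausdorff_continuity} since Lipschitz continuity implies uniform continuity. The extra care you take with the essential supremum and the factor $\sqrt{w}$ is a welcome elaboration of what the paper leaves implicit.
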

    \begin{proof}
        The first assertion follows by the reverse triangle inequality, $\abs{\abs{v\pars{y}}-\abs{w\pars{y}}} \le \abs{v\pars{y}-w\pars{y}}$.
        The second asserion follows by Lemma~\ref{lem:continuity_implies_pseudo-Hausdorff_continuity}, since Lipschitz continuity implies uniform continuity.
    \end{proof}
    
    \begin{lemma} \label{lem:sup_continuous}
        $\sup : \mfrak{P}\pars{\mcal{V}_{w,\infty}} \to \mcal{V}_{w,\infty}$ is Lipschitz continuous with constant $1$.
    \end{lemma}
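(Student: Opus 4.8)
The plan is to establish the single estimate
\[
    \norm{\sup\pars{A} - \sup\pars{B}}_{w,\infty} \le d_{\mathrm{H}}\pars{A,B}
    \qquad\text{for all } A,B \subseteq \mcal{V}_{w,\infty},
\]
which is exactly what it means for $\sup$ to be $1$-Lipschitz (hence uniformly continuous) with respect to the Hausdorff pseudometric. Note that, unlike $\operatorname{abs}$, the map $\sup$ sends a \emph{set} to a single element rather than acting elementwise, so Lemma~\ref{lem:continuity_implies_pseudo-Hausdorff_continuity} does not apply and the estimate must be shown directly.

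Write $\varepsilon := d_{\mathrm{H}}\pars{A,B}$. As recalled in the proof of Lemma~\ref{lem:continuity_implies_pseudo-Hausdorff_continuity}, this means that $\inf_{b\in B}\norm{a-b}_{w,\infty}\le\varepsilon$ for every $a\in A$, and symmetrically. By the symmetry of $d_{\mathrm{H}}$ in its arguments, it suffices to prove the pointwise bound
\[
    \sqrt{w\pars{y}}\,\sup\pars{A}\pars{y} \le \sqrt{w\pars{y}}\,\sup\pars{B}\pars{y} + \varepsilon
\]
for (almost) every $y\in Y$: the companion bound with $A$ and $B$ interchanged then yields $\abs{\sqrt{w(y)}\bracs{\sup(A)(y)-\sup(B)(y)}}\le\varepsilon$, and passing to the (essential) supremum over $y$ gives the claim. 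The same estimate, applied with $B$ a singleton, also shows $\sup(A)\in\mcal{V}_{w,\infty}$, so that the map is well-defined.

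To prove the displayed inequality I would fix $y$ and $a\in A$, and for arbitrary $\eta>0$ pick $b\in B$ with $\norm{a-b}_{w,\infty}\le\varepsilon+\eta$; then $\sqrt{w(y)}\abs{a(y)-b(y)}\le\norm{a-b}_{w,\infty}\le\varepsilon+\eta$, and since $\sqrt{w(y)}\,b(y)\le\sqrt{w(y)}\,\sup(B)(y)$ (using $w(y)\ge 0$) we obtain
\[
    \sqrt{w\pars{y}}\,a\pars{y} \le \sqrt{w\pars{y}}\,\sup\pars{B}\pars{y} + \varepsilon + \eta .
\]
Letting $\eta\downarrow 0$ and then taking the supremum over $a\in A$ gives the bound.

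The one point that needs care — and the main obstacle — is the interchange of the supremum over the possibly uncountable family $A$ with the essential supremum over $y$: a priori the null set on which $\sqrt{w(y)}\abs{a(y)-b(y)}\le\norm{a-b}_{w,\infty}$ could fail depends on $a$ and $b$, and an uncountable union of null sets need not be null (indeed the inequality is false for arbitrary subsets of $L^\infty$ if one works with genuine equivalence classes). This is resolved by the paper's standing convention that point evaluations are well-defined at every $y\in Y$ for the functions under consideration, so that $\sqrt{w(y)}\abs{v(y)}\le\norm{v}_{w,\infty}$ may be read as holding at \emph{every} $y$ and $\sup(A)$, $\sup(B)$ are honest pointwise suprema; with that reading no union of exceptional sets occurs and the argument above is valid at each $y\in Y$.
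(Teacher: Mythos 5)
Your proof is correct and takes essentially the same route as the paper's: both establish the pointwise one-sided bound $\sup\pars{A}\pars{y} \le \sup\pars{B}\pars{y} + d_{\mathrm{H}}\pars{A,B}$ (the paper phrases this via the identity $\sup\pars{A}\pars{y}-\sup\pars{B}\pars{y}=\sup_{a\in A}\inf_{b\in B}\pars{a\pars{y}-b\pars{y}}$, you via an explicit $\eta$-approximation of the infimum) and then symmetrize and take the supremum over $y$. Your closing remark on interchanging the supremum over a possibly uncountable $A$ with the essential supremum over $y$ flags a subtlety the paper silently passes over, and resolving it via the standing convention that point evaluations are defined everywhere is the right reading.
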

    \begin{proof}
        Let $U,V \in \mfrak{P}\pars{\mcal{V}_{w,\infty}}$.
        To show that $\norm{\sup\pars{U} - \sup\pars{V}}_{w,\infty} \le d_{\mathrm{H}}\pars{U,V}$, fix $y\in Y$ and assume w.l.o.g.\ that $\sup\pars{U}\pars{y} \ge\sup\pars{V}\pars{y}$.
        Then
        \begin{align}
            \sqrt{w\pars{y}} \abs*{\sup\pars{U}\pars{y} - \sup\pars{V}\pars{y}}
            &= \sqrt{w\pars{y}} \sup_{u\in U}\inf_{v\in V} \pars{u\pars{y}-v\pars{y}} \\
            &\le \sup_{u\in U}\inf_{v\in V} \norm{u-v}_{w,\infty} \\
            &\le d_{\mathrm{H}}\pars{U, V} .
        \end{align}
        Taking the supremum over $y\in Y$ proves the assertion.
    \end{proof}
    
    \begin{lemma} \label{lem:sq_continuous}
        $\operatorname{sq} : \mcal{V}_{w,\infty} \to \mcal{V}_{w^2,\infty}$ is continuous.
    \end{lemma}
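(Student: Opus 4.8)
\textbf{Proof proposal for Lemma~\ref{lem:sq_continuous}.}
The plan is to show that $\operatorname{sq}$ is in fact Lipschitz on every bounded subset of $\mcal{V}_{w,\infty}$, which immediately yields continuity. Everything hinges on the algebraic identity $a^2 - b^2 = (a-b)(a+b)$ together with a careful bookkeeping of how the weight is distributed: the target norm carries the factor $\sqrt{w^2} = w$, and the trick is to split this as $\sqrt{w}\cdot\sqrt{w}$ so that each of the two factors $(a-b)$ and $(a+b)$ gets exactly one $\sqrt{w}$ and can therefore be estimated in the $\norm{\bullet}_{w,\infty}$-norm.

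Concretely, I would first record that $\operatorname{sq}$ indeed maps into $\mcal{V}_{w^2,\infty}$: for $v\in\mcal{V}_{w,\infty}$ one has $\norm{\operatorname{sq}\pars{v}}_{w^2,\infty} = \esssup_{y} w\pars{y}\abs{v\pars{y}}^2 = \pars{\esssup_{y}\sqrt{w\pars{y}}\abs{v\pars{y}}}^2 = \norm{v}_{w,\infty}^2 < \infty$, so the statement is well-posed. Then, for any two elements $v_1,v_2\in\mcal{V}_{w,\infty}$ and for almost every $y\in Y$, I would write
\[
    w\pars{y}\abs{v_1\pars{y}^2 - v_2\pars{y}^2}
    = \pars*{\sqrt{w\pars{y}}\abs{v_1\pars{y}-v_2\pars{y}}}\pars*{\sqrt{w\pars{y}}\abs{v_1\pars{y}+v_2\pars{y}}}
    \le \norm{v_1-v_2}_{w,\infty}\pars*{\norm{v_1}_{w,\infty}+\norm{v_2}_{w,\infty}},
\]
using the triangle inequality on the second factor. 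Taking the essential supremum over $y$ gives $\norm{\operatorname{sq}\pars{v_1}-\operatorname{sq}\pars{v_2}}_{w^2,\infty} \le \norm{v_1-v_2}_{w,\infty}\pars{\norm{v_1}_{w,\infty}+\norm{v_2}_{w,\infty}}$.

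Finally, to pass from this local-Lipschitz estimate to continuity at an arbitrary $v_1\in\mcal{V}_{w,\infty}$, I would bound $\norm{v_2}_{w,\infty}\le\norm{v_1}_{w,\infty}+\norm{v_1-v_2}_{w,\infty}$, so that the right-hand side above is at most $\norm{v_1-v_2}_{w,\infty}\pars{2\norm{v_1}_{w,\infty}+\norm{v_1-v_2}_{w,\infty}}$, which tends to $0$ as $v_2\to v_1$. I do not anticipate a real obstacle here; the only point requiring any care is the splitting $\sqrt{w^2}=\sqrt{w}\cdot\sqrt{w}$ that makes the two factors land in the correct norm, and keeping the generic elements of $\mcal{V}_{w,\infty}$ notationally distinct from the weight function $w$.
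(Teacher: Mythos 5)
Your proposal is correct and follows essentially the same route as the paper: both arguments reduce to the bound $\norm{\operatorname{sq}\pars{v_1}-\operatorname{sq}\pars{v_2}}_{w^2,\infty}\le\pars{\norm{v_1}_{w,\infty}+\norm{v_2}_{w,\infty}}\norm{v_1-v_2}_{w,\infty}$ (the paper via $v_1^2-v_2^2=v_1\pars{v_1-v_2}+v_2\pars{v_1-v_2}$, you via $\pars{v_1-v_2}\pars{v_1+v_2}$, which is the same estimate), followed by the reverse-triangle-inequality control of $\norm{v_2}_{w,\infty}$ near $v_1$. No gaps.
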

    \begin{proof}
        Fix $v\in\mcal{V}_{w,\infty}$ and let $w\in\mcal{V}_{w,\infty}$ be arbitrary.
        Then 
        \begin{align}
            \norm{v^2 - w^2}_{w^2,\infty}
            &= \norm{vv - vw + vw - ww}_{w^2,\infty} \\
            &\le \norm{v\pars{v-w}}_{w^2,\infty} + \norm{w\pars{v-w}}_{w^2,\infty} \\
            &\le \pars{\norm{v}_{w,\infty}+\norm{w}_{w,\infty}}\norm{v-w}_{w,\infty} .
        \end{align}
        Observe that, due the reverse triangle inequality, $\norm{v-w}_{w,\infty}\le\delta$ implies $\norm{w}_{w,\infty} \le \norm{v}_{w,\infty}+\delta$.
        This proves continuity, since for any $\varepsilon$ we can choose $\delta$ such that $\norm{v-w}_{w,\infty}\le \delta$ implies
        \begin{equation}
            \norm{v^2 - w^2}_{w^2,\infty} \le \pars{2\norm{v}_{w,\infty} + \delta} \delta \le \varepsilon .
        \end{equation}
    \end{proof}
    
    As a composition of continuous functions, the continuity of $\operatorname{sq}\circ \sup\circ \operatorname{abs} : \mfrak{P}\pars{\mcal{V}_{w,\infty}} \to \mcal{V}_{w^2,\infty}$ is guaranteed by Lemmas~\ref{lem:abs_continuous} to \ref{lem:sq_continuous}.
    We now proceed by proving the continuity of $U$ in the settings of Theorem~\ref{thm:K_properties:continuity:compact_sets}, \ref{thm:K_properties:continuity:all_sets} and~\ref{thm:K_properties:continuity:cones} individually.
     
    \item To prove this we need the subsequent lemma.
    \begin{lemma} \label{lem:continuity_implies_Hausdorff_continuity}
        Let $\pars{M_1,d_1}$ and $\pars{M_2,d_2}$ be metric spaces, let $f:M_1\to M_2$ and define $f\pars{X} := \braces{ f\pars{x} : x\in X}$ for any $X\in M_1$.
        If $f$ is continuous, then $f : \mfrak{C}\pars{M_1}\to\mfrak{C}\pars{M_2}$ is continuous with respect to the Hausdorff metric.
    \end{lemma}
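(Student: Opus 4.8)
The plan is to reduce the claim to a single ``uniform continuity near the compact set $X$'' statement and then unwind the definition of the Hausdorff metric.

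First I would check that $f$ actually maps $\mfrak{C}\pars{M_1}$ into $\mfrak{C}\pars{M_2}$: for $X\in\mfrak{C}\pars{M_1}$ the image $f\pars{X}$ is the continuous image of a non-empty compact set, hence non-empty and compact, so $d_{\mathrm{H}}\pars{f\pars{X},f\pars{Y}}$ is a genuine (finite) distance for $X,Y\in\mfrak{C}\pars{M_1}$. It then suffices to prove continuity of $f:\mfrak{C}\pars{M_1}\to\mfrak{C}\pars{M_2}$ at an arbitrary fixed $X\in\mfrak{C}\pars{M_1}$.

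The key lemma I would isolate is: for every $\varepsilon>0$ there is $\delta>0$ such that $x\in X$, $z\in M_1$ and $d_1\pars{x,z}<\delta$ imply $d_2\pars{f\pars{x},f\pars{z}}<\varepsilon$. Note that one cannot simply invoke uniform continuity of $f$ on a closed bounded neighbourhood of $X$, since in the general metric spaces at hand (in the intended application, infinite-dimensional function spaces) such neighbourhoods need not be compact; the point is to anchor one of the two arguments in the compact set $X$. I would prove this by contradiction using sequential compactness: if it fails there are $\varepsilon>0$ and $x_n\in X$, $z_n\in M_1$ with $d_1\pars{x_n,z_n}<1/n$ but $d_2\pars{f\pars{x_n},f\pars{z_n}}\ge\varepsilon$; passing to a subsequence with $x_n\to x\in X$ forces $z_n\to x$ as well, and continuity of $f$ at $x$ gives $f\pars{x_n}\to f\pars{x}$ and $f\pars{z_n}\to f\pars{x}$, contradicting $d_2\pars{f\pars{x_n},f\pars{z_n}}\ge\varepsilon$. (A finite-subcover / Lebesgue-number argument would work just as well.)

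Given the lemma, continuity at $X$ is a short computation: fix $\varepsilon>0$, take $\delta$ from the lemma for $\varepsilon/2$, and let $Y\in\mfrak{C}\pars{M_1}$ satisfy $d_{\mathrm{H}}\pars{X,Y}<\delta$. For any $x\in X$ there is $y\in Y$ with $d_1\pars{x,y}<\delta$, hence $d_2\pars{f\pars{x},f\pars{y}}<\varepsilon/2$, so $\sup_{p\in f\pars{X}}\inf_{q\in f\pars{Y}}d_2\pars{p,q}\le\varepsilon/2$; symmetrically, for any $y\in Y$ there is $x\in X$ with $d_1\pars{x,y}<\delta$, and since $x\in X$ the lemma again yields $d_2\pars{f\pars{x},f\pars{y}}<\varepsilon/2$, so the other term in $d_{\mathrm{H}}\pars{f\pars{X},f\pars{Y}}$ is also $\le\varepsilon/2$. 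Therefore $d_{\mathrm{H}}\pars{f\pars{X},f\pars{Y}}\le\varepsilon/2<\varepsilon$. The only genuine obstacle is the point flagged above — resisting the urge to use compactness of a neighbourhood of $X$ and instead proving the asymmetric uniform-continuity lemma — after which everything is bookkeeping with the definition of $d_{\mathrm{H}}$.
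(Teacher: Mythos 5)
Your proof is correct and takes essentially the same approach as the paper's: both reduce the claim to the asymmetric uniform-continuity statement anchored at the compact set $X$ (that points of $M_1$ close to $X$ have images close to the corresponding images of $X$), and then unwind the definition of $d_{\mathrm{H}}$. The only difference is that you prove that key step by sequential compactness and contradiction, whereas the paper uses an explicit finite subcover of $X$ by half-radius balls --- the Lebesgue-number variant you yourself note would work just as well.
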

    \begin{proof}
        $f : \mfrak{C}\pars{M_1}\to\mfrak{C}\pars{M_2}$ is well-defined since the image of a compact set under a continuous function is compact.
        Now recall, that $d_{\mathrm{H}}\pars{X,Y} \le \varepsilon$ means that
        \begin{equation}
            \forall x\in X\,\exists y\in Y:\, d\pars{x,y} \le \varepsilon
            \qquad\text{and}\qquad
            \forall y\in Y\,\exists x\in X:\, d\pars{y,x} \le \varepsilon .
        \end{equation}
        
        Let $\varepsilon>0$ and $U\in\mfrak{C}\pars{M_1}$.
        Since $f$ is continuous in every $u\in U$ there exists a $\delta_u > 0$ that guarantees
        \begin{equation}
            d_1\pars{u,\tilde{u}} \le \delta_u \Rightarrow d_2\pars{f\pars{u}, f\pars{\tilde{u}}} \le \frac{\varepsilon}{2} .
        \end{equation}
        Now define the sets $N_u := \braces{\tilde{u}\in M_1 : d_1\pars{u,\tilde{u}} \le \frac{\delta_u}{2}}$.
        Since $u\in N_u$, the family $\braces{N_u}_{u\in U}$ defines a covering of $U$ and since $U$ is compact there exists a finite subcovering $\braces{N_{u_i}}_{i=1,\ldots,n}$.
        Choose $\delta := \min_{i=1,\ldots,n}\frac{\delta_{u_i}}{2}$ and note, that $\delta$ has to be positive, since it is the minimum of finitely many positive numbers.
        Now let $V\in\mfrak{C}\pars{M_1}$ such that $d_{\mathrm{H}}\pars{U,V}\le\delta$.
        
        First, we show that
        \begin{equation}
            \forall f_v\in f\pars{V}\exists f_u\in f\pars{U} : d_2\pars{f_v, f_u} \le \varepsilon .
        \end{equation}
        For this let $v\in V$ be any element that satisfies $f\pars{v} = f_v$.
        Since $d_{\mathrm{H}}\pars{U,V}\le\delta$ there exists $u\in U$ with $d_1\pars{u,v}\le\delta$.
        Moreover, by definition of the covering $\braces{N_{u_i}}_{i=1,\ldots,n}$, there exists $u_i$ such that $d_1\pars{u,u_i}\le\frac{\delta_{u_i}}{2}$.
        Using the triangle inequality, we thus obtain
        \begin{equation}
            d_1\pars{u_i,v} \le d_1\pars{u_i,u} + d_1\pars{u,v} \le \frac{\delta_{u_i}}{2} + \delta \le \delta_{u_i}
        \end{equation}
        and the definition of $\delta_{u_i}$ finally yields $d_2\pars{f\pars{v}, f\pars{u_i}} \le \frac{\varepsilon}{2} \le \varepsilon$.
        
        Now we show that
        \begin{equation}
            \forall f_u\in f\pars{U}\exists f_v\in f\pars{V} : d_2\pars{f_u, f_v} \le \varepsilon .
        \end{equation}
        Analogously to the argument from above let $u\in U$ be any element that satisfies $f\pars{u}=f_u$.
        Since $d_{\mathrm{H}}\pars{U,V}\le\delta$ there exists $v\in V$ with $d_1\pars{u,v}\le\delta$ and by the definition of the covering there exists also a $u_i$ with $d_1\pars{u,u_i}\le\frac{\delta_{u_i}}{2}$.
        We can now estimate
        \begin{equation}
            d_2\pars{f\pars{u},f\pars{v}} \le d_2\pars{f\pars{u},f\pars{u_i}} + d_2\pars{f\pars{u_i},f\pars{v}} \le \frac{\varepsilon}{2} + \frac{\varepsilon}{2} = \varepsilon
        \end{equation}
        which holds by the definition of $\delta_{u_i}$ and because
        \begin{equation}
            d_1\pars{u_i,v} \le d_1\pars{u_i,u} + d_1\pars{u,v} \le \frac{\delta_{u_i}}{2} + \delta \le \delta_{u_i} .
        \end{equation}
    \end{proof}
    
    Since the function $u \mapsto u/\norm{u}$ is continuous on $\mcal{V}_{w,\infty}\setminus \braces{0}$ the function $U : \mfrak{C}\pars{\mcal{V}_{w,\infty}\setminus\braces{0}} \to \mfrak{C}\pars{S\pars{0,1}\cap\mcal{V}_{w,\infty}}$ is continuous by Lemma~\ref{lem:continuity_implies_Hausdorff_continuity}.
    
    \item Let $r>0$.
    Since the function $u \mapsto u/\norm{u}$ is uniformly continuous on $\mcal{V}_{w,\infty}\setminus B\pars{0,r}$ the function $U : \mfrak{P}\pars{\mcal{V}_{w,\infty}\setminus B\pars{0,r}} \to \mfrak{P}\pars{S\pars{0,1}\cap\mcal{V}_{w,\infty}}$ is uniformly continuous by Lemma~\ref{lem:continuity_implies_pseudo-Hausdorff_continuity}.
    
    \item By definition of the truncated Hausdorff distance, $U : \operatorname{Cone}\pars{\mfrak{P}\pars{\mcal{V}_{w,\infty}}} \to \mfrak{P}\pars{S\pars{0,1}\cap\mcal{V}_{w,\infty}}$ is Lipschitz continuous with constant $1$.
    
    \item Every $v\in A + B$ can be written as $v = \vec{v}^\intercal\alpha$ for some $\alpha\in\mathbb{R}^2$ and $\vec{v}\in \pars{A\times B}\setminus\braces{0}$.
    Moreover, $A\perp B$ implies that $\norm{v}^2 = \alpha^\intercal D(\vec{v})^2 \alpha$ with $D(\vec{v}) := \operatorname{diag}\pars{\norm{\vec{v}_1}, \norm{\vec{v}_2}}$.
    Now define $C_{\vec{v},y} = D(\vec{v})^{-1}\vec{v}(y)$ and observe that
    \begin{align}
        \mfrak{K}_{A + B}\pars{y} 
        &\le \sup_{\vec{v}\in \pars{A\times B}\setminus\braces{0}} \sup_{\alpha\in\mathbb{R}^2\setminus\braces{0}}\frac{\abs{\alpha^\intercal \vec{v}(y)\vec{v}(y)^\intercal \alpha}}{\alpha^\intercal D(\vec{v})^2 \alpha} \\
        &= \sup_{\vec{v}\in \pars{A\times B}\setminus\braces{0}} \sup_{\beta\in\mathbb{R}^2\setminus\braces{0}} \frac{\abs{\beta^\intercal C_{\vec{v},y}C_{\vec{v},y}^\intercal \beta}}{\beta^\intercal \beta} \\
        &= \sup_{\vec{v}\in \pars{A\times B}\setminus\braces{0}} \norm{C_{\vec{v},y}}_2^2 \\
        &= \sup_{\vec{v}_1\in A\setminus\braces{0}} \sup_{\vec{v}_2\in B\setminus\braces{0}} \tfrac{\abs{\vec{v}_1(y)}^2}{\norm{\vec{v}_1}^2} + \tfrac{\abs{\vec{v}_2(y)}^2}{\norm{\vec{v}_2}^2} \\
        &= \mfrak{K}_{A}\pars{y} + \mfrak{K}_{B}\pars{y} .
    \end{align}
    Note that the first inequality is indeed an equality, if $A$ and $B$ are linear spaces.

    \item Let $a\in A$ and $b\in B$.
    Since $a\indep b$ also $a^2\indep b^2$ and consequently $\norm{a\cdot b}^2 = \mbb{E}\bracs{a^2 b^2} = \mbb{E}\bracs{a^2} \mbb{E}\bracs{b^2} = \norm{a}^2\norm{b}^2$.
    Now recall that $\mathfrak{K}_A\pars{y} = \sup_{a\in U\pars{A}} a\pars{y}^2$. Thus
    \begin{equation}
        \mathfrak{K}_{A\cdot B}\pars{y}
        = \sup_{a\in A} \sup_{b\in B} \frac{\pars{a\cdot b}\pars{y}^2}{\norm{a\cdot b}^2}
        = \sup_{a\in A} \sup_{b\in B} \frac{a\pars{y}^2\cdot b\pars{y}^2}{\norm{a}^2\norm{b}^2}
        = \mathfrak{K}_{A}\pars{y} \cdot \mathfrak{K}_{B}\pars{y} .
    \end{equation}

    \item A direct consequence of Theorem~\ref{thm:K_properties:sum} is the following lemma.
    \begin{lemma} \label{lem:K_properties:span}
        Let $\braces{P_j}_{j\in J}$ be an orthonormal basis for $A$. Then $\mfrak{K}_{A}\pars{y} = \sum_{j\in J} P_j\pars{y}^2$.
    \end{lemma}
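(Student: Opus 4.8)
The plan is to derive the identity from finitely many applications of the equality statement in Theorem~\ref{thm:K_properties:sum}, and then, when the index set $J$ is infinite, to pass to the limit via the closure and union rules of Theorem~\ref{thm:K_properties}.

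First I would treat a single basis function: for any $j\in J$ the set $U\pars{\inner{P_j}}$ reduces to $\braces{\pm P_j}$ because $\norm{P_j}=1$, so definition~\eqref{eq:variation_constant} gives directly $\mfrak{K}_{\inner{P_j}}\pars{y} = \abs{P_j\pars{y}}^2 = P_j\pars{y}^2$. Next, for a finite subset $F\subseteq J$ the space $A_F := \inner{P_j : j\in F}$ is the pairwise $L^2\pars{Y,\rho}$-orthogonal sum of the lines $\inner{P_j}$, $j\in F$, each of which is a linear space, so iterating the equality case of Theorem~\ref{thm:K_properties:sum} yields $\mfrak{K}_{A_F} = \sum_{j\in F} \mfrak{K}_{\inner{P_j}} = \sum_{j\in F} P_j^2$. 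If $J$ is finite this already proves the lemma with $F=J$.

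For infinite $J$ I would choose an increasing exhaustion $A_{F_1}\subseteq A_{F_2}\subseteq\cdots$ by finite-dimensional subspaces whose union is dense in $A$. Monotonicity of $\mfrak{K}_{\bullet}$ (the consequence of Theorem~\ref{thm:K_properties:union} recorded right after that theorem) together with Theorem~\ref{thm:K_properties:closure} then gives
\[
    \mfrak{K}_A = \mfrak{K}_{\cl\pars*{\bigcup_n A_{F_n}}} = \mfrak{K}_{\bigcup_n A_{F_n}} = \sup_n \mfrak{K}_{A_{F_n}} = \sup_n \sum_{j\in F_n} P_j^2 = \sum_{j\in J} P_j^2 ,
\]
the last step being the definition of a sum of non-negative terms. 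The only point requiring care — and the main, though mild, obstacle — is that the exhaustion be dense in the topology under which Theorem~\ref{thm:K_properties:closure} was established; inspecting its proof shows it uses only continuity of $v\mapsto v\pars{y}$ and $v\mapsto\norm{v}$ on $\mcal{V}_{w,\infty}$, so it suffices that $\bigcup_n A_{F_n}$ be $\mcal{V}_{w,\infty}$-dense in $A$, which is exactly what ``$\braces{P_j}_{j\in J}$ is an orthonormal basis for $A$'' should be taken to mean. Alternatively, one can sidestep the limiting argument entirely: the upper bound $\mfrak{K}_A\pars{y}\le\sum_{j\in J}P_j\pars{y}^2$ follows from Cauchy--Schwarz applied to $a\pars{y}=\sum_j c_j P_j\pars{y}$ with $\norm{a}^2=\sum_j c_j^2$, while the matching lower bound is attained, for each finite $F$, by the explicit element $a=\sum_{j\in F}P_j\pars{y}P_j\in A$.
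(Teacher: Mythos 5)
Your proof is correct and follows essentially the route the paper intends: the paper labels this lemma ``a direct consequence of Theorem~\ref{thm:K_properties:sum}'' and records the proof as ``Trivial,'' i.e.\ exactly your iteration of the equality case of the sum rule over the orthogonal lines $\inner{P_j}$. Your additional care for infinite $J$ (the exhaustion/closure argument, and the Cauchy--Schwarz alternative) goes beyond what the paper writes down but is consistent with it.
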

    \begin{proof}
        Trivial.
    \end{proof}
    Now let $\braces{P_{A,j}}_{j\in J}$ be an orthonormal basis of $A$ and $\braces{P_{B,k}}_{k\in K}$ be an orthonormal basis of $B$.
    Then $\braces{P_{A,j}\otimes P_{B,k}}_{j\in J, k\in K}$ is an orthonormal basis for $A\otimes B$ and by Lemma~\ref{lem:K_properties:span}
    \begin{align}
        \mfrak{K}_{A\otimes B}\pars{y}
        &= \sum_{j\in J}\sum_{k\in K} P_{A,j}\pars{y}^2\cdot P_{B,k}\pars{y}^2 \\
        &= \pars*{\sum_{j\in J} P_{A,j}\pars{y}^2}\cdot\pars*{\sum_{k\in K} P_{B,k}\pars{y}^2} \\
        &= \mfrak{K}_A\pars{y}\cdot \mfrak{K}_B\pars{y} .
    \end{align}
\end{enumerate}

\section{Appendix: Proof of Proposition~\ref{prop:nhood_properties}} \label{proof:prop:nhood_properties}

The first statement indeed characterizes the reach of a set and an easily accessible proof can be found in \cite[Theorem~7.8~(2)]{boissonnat2018geometric}.
We reiterate this proof in the following, since the proof of the second statement relies on similar arguments.

\begin{enumerate}
\item

Let $v\in\mcal{N}\cap B\pars{u,r}$.
Then there exists a unique best approximation of $v$ in $u+\mbb{T}_u\mcal{N}$ which we denote by $w$.
By the Pythagorean theorem, it holds that $\norm{u-w}^2 = \norm{u-v}^2 - \norm{v-w}^2 \le r^2$ and thus $w\in\pars{u+\mbb{T}_u\mcal{N}}\cap B\pars{u,r}$.

To show that $\norm{v-w}\le C\norm{u-v}^2$, we consider the intersection of the sets $\mcal{N}$ and $u+\mbb{T}_u\mcal{N}$ with the plane spanned by $u$, $v$ and $w$.
Since all three points lie in this plane their relative distances are preserved and it suffices to consider this two-dimensional problem from here on.
Let $D$ be the disk of radius $R$ that is tangent to $\mbb{T}_u\mcal{N}$ at $u$ and whose center $c$ is on the same side of $u+\mbb{T}_u\mcal{N}$ as $v$.
This is illustrated in Figure~\ref{fig:projecting_N_to_TN}.

Because $\norm{v-w} = \norm{u-v}\sin\pars{\alpha}$, it suffices to bound $\sin\pars{\alpha}$.
To this end, observe that $\rch\pars{\mcal{N}, u} = R$ implies that the disc $D$ intersects $\mcal{N}$ only in $u$.
This means, that $v$ does not lie in the interior of $D$ and the line segment $\overline{uv} = \braces{\lambda u + \pars{1-\lambda}v : \lambda\in\bracs{0,1}}$ must intersect the boundary of $D$ at a point $x$.
Since the triangle $\Delta\pars{u,c,x}$ is isosceles, we have $\beta = 2\alpha$ and $\norm{u-x} = 2R\sin\pars{\frac{\beta}{2}} = 2R\sin\pars{\alpha}$.
Using $\norm{u-x}\le\norm{u-v}$ finally yields
\begin{equation} \label{eq:v-w_bound}
	\norm{v-w} = \norm{u-v}\sin\pars{\alpha} = \norm{u-v}\frac{\norm{u-x}}{2R} \le \frac{\norm{u-v}^2}{2R} .
\end{equation}

\begin{figure}[H]
    \centering
    \newcommand{\degre}{\ensuremath{^\circ}}
    \definecolor{angleColor}{rgb}{0.5,0.5,0.5}
    \definecolor{denim}{rgb}{0.08, 0.38, 0.74}
    \tikzset{
    dot/.style = {circle, fill, minimum size=#1, inner sep=0pt, outer sep=0pt},
    dot/.default = 3.5pt  
    }
    \newcommand*\curvature{0.1}
    \newcommand*\radius{3.0}  
    \newcommand*\vx{5.5}
    \pgfmathdeclarefunction{Parabola}{1}{\pgfmathparse{-(#1)^2*\curvature/2}}
    \begin{tikzpicture}[line cap=round,line join=round, scale=0.865]
    \newcommand*\xlim{7.5}
    \clip(-\xlim,-4) rectangle (\xlim,1);  
    
    \coordinate (u) at (0,0);
    \coordinate (c) at (0,-\radius);
    \coordinate (v) at ($(\vx,{Parabola(\vx)})$);
    \coordinate (w) at (\vx,0);
    
    \draw [line width=1.2pt,domain=-6.5:6.5,samples=50,name path=parabola] plot (\x, {Parabola(\x)});
    {
        \def\Nx{-5.75}
        \draw[color=black] ($(\Nx,{Parabola(\Nx)})$) node[anchor=north] {$\mcal{N}$};
    }
    \draw [line width=1.2pt,domain=-\xlim:\xlim,name path=line] plot (\x, {0*\x});
    {
        \def\TuNx{-6.5}
        \draw[color=black] (\TuNx,0) node[anchor=south] {$u+\mbb{T}_u\mcal{N}$};
    }
    \draw [name path=circle] (c) circle (\radius);
    {
        \def\Dphi{170}
        \draw[color=black] ($(c) + (\Dphi:\radius)$) node[anchor=east] {$D$};
    }
    
    \node[dot, label=above:$u$] at (u) {};
    \node[dot, label=below:$v$] at (v) {};
    \node[dot, label=above:$w$] at (w) {};
    \draw (v) -- (w);
    \draw [dash pattern=on 3pt off 3pt,name path=segment] (u) -- (v);
    \path [name intersections={of = circle and segment}];
	\node[dot, label={[shift={(-0.4,-0.4)}]:$x$}] (x) at (intersection-1) {};
    \node[dot, label=below:$c$] at (c) {};
    \draw (c) -- (u) node [midway, left] {$R$}
          (c) -- (x); 
          
    \draw let
        \p1 = (x),
        \p2 = (u),
        \n1 = {veclen(\x1-\x2,\y1-\y2)},
    in
        \pgfextra{\pgfmathsetmacro{\sinAngle}{\n1/(2cm*\radius)}}
        \pgfextra{\pgfmathsetmacro{\startAngle}{90-2*asin(\sinAngle)}}
        ($(c)+(\startAngle:1)$) arc (\startAngle:90:1) node [midway, label={[shift={(-0.03,-0.65)}]$\beta$}] {};
    
    
    \draw let
        \p1 = (u),
        \p2 = (v),
        \p3 = (w),
        \n1 = {veclen(\x1-\x2,\y1-\y2)},  
        \n2 = {veclen(\x2-\x3,\y2-\y3)},  
    in
        \pgfextra{\pgfmathsetmacro{\sinAngle}{\n2/\n1}}
        \pgfextra{\pgfmathsetmacro{\startAngle}{-asin(\sinAngle)}}
        ($(u)+(\startAngle:2)$) arc (\startAngle:0:2) node[label={[shift={(0,-0.07)}]$\alpha$}] {};
    \end{tikzpicture}
    \caption{}
    \label{fig:projecting_N_to_TN}
\end{figure}

\item
Let $P : \mcal{N} \to {u + \mbb{T}_u\mcal{N}}$ be the Euclidean projection from $\mcal{N}$ onto the tangent space of $\mcal{N}$ at $u$ and define 
\begin{equation}
    r^\ast := \min\braces{\sup\braces{r > 0 \,:\, \pars{u+\mbb{T}_u\mcal{N}}\cap B\pars{u,r} \subseteq P\mcal{N}}, R} .
\end{equation}
Since $\mcal{N}$ is a neighbourhood of $u$, this constant is positive and we can consider some fixed $r\in\pars{0, r^\ast}$.
By definition of $r^\ast$, there exists for evey $w\in \pars{u+\mbb{T}_u\mcal{N}}\cap B\pars{u,r}$ an element $\tilde{v}\in\mcal{N}$ such that $P_{\mbb{T}_u\mcal{N}} \tilde{v} = w$.
Analogously to the proof of Proposition~\ref{prop:nhood_properties:tangent_space_projection}, we consider the intersection of the sets $\mcal{N}$ and $u+\mbb{T}_u\mcal{N}$ with the plane spanned by $u$, $\tilde{v}$ and $w$.
Because the distance between these points is preserved by the intersection, we can again consider the resulting two-dimensional problem.
Let $D$ be the disk of radius $R$ that is tangent to $\mbb{T}_u\mcal{N}$ at $u$ and whose center $c$ is on the same side of $u+\mbb{T}_u\mcal{N}$ as $\tilde{v}$.
Since $r< R$, the neighbourhood $\mcal{N}$ is a simply connected manifold and the line segment $\overline{cw} = \braces{\lambda c + \pars{1-\lambda}w : \lambda\in\bracs{0,1}}$ has to intersect $\mcal{N}$ at a point $v$.
Finally, let $x := R\frac{w-c}{\norm{w-c}} + c$ be the intersection of $\overline{cw}$ with the disc $D$.
This is illustrated in Figure~\ref{fig:projecting_TN_to_N}.

Since $\norm{w-v} \le \norm{w-x}$, it suffices to bound $\norm{w-x}$ which is given by the Pythagorean theorem as $\norm{w-x} = \sqrt{R^2+\norm{w-u}^2} - R$.
Defining $\ell\pars{r} := \sqrt{R^2+r^2} - R$ and $\tilde{\ell}\pars{r} := \frac{r^2}{2R}$ we observe that $\ell\pars{r} \le \tilde{\ell}\pars{r}$ since $\ell\pars{0} = 0 = \tilde{\ell}\pars{0}$ and
\begin{equation}
    \ell'\pars{r} = \frac{r}{\sqrt{R^2+r^2}} \le \frac{r}{R} = \tilde{\ell}'\pars{r} .
\end{equation}
This yields $\norm{w-v}\le\norm{w-x} = \ell\pars{\norm{w-u}} \le \tilde\ell\pars{\norm{w-u}} = \frac{r^2}{2R}$ and concludes the proof.



\begin{figure}[H]
    \centering
    \newcommand{\degre}{\ensuremath{^\circ}}
    \definecolor{angleColor}{rgb}{0.5,0.5,0.5}
    \definecolor{denim}{rgb}{0.08, 0.38, 0.74}
    \tikzset{
    dot/.style = {circle, fill, minimum size=#1, inner sep=0pt, outer sep=0pt},
    dot/.default = 3.5pt  
    }
    \newcommand*\curvature{0.1}

    \newcommand*\radius{3.0}  
    \newcommand*\wx{5.5}

    \pgfmathdeclarefunction{Parabola}{1}{\pgfmathparse{-(#1)^2*\curvature/2}}
    \begin{tikzpicture}[line cap=round,line join=round, scale=0.865]
    \newcommand*\xlim{7.5}
    \clip(-\xlim,-4) rectangle (\xlim,1);  
    
    \coordinate (u) at (0,0);
    \coordinate (c) at (0,-\radius);
    \coordinate (w) at (\wx,0);
    
    \draw [line width=1.2pt,domain=-6.5:6.5,samples=50,name path=parabola] plot (\x, {Parabola(\x)});
    {
        \def\Nx{-5.75}
        \draw[color=black] ($(\Nx,{Parabola(\Nx)})$) node[anchor=north] {$\mcal{N}$};
    }
    \draw [line width=1.2pt,domain=-\xlim:\xlim,name path=line] plot (\x, {0*\x});
    {
        \def\TuNx{-6.5}
        \draw[color=black] (\TuNx,0) node[anchor=south] {$u+\mbb{T}_u\mcal{N}$};
    }
    \draw [name path=circle] (c) circle (\radius);
    {
        \def\Dphi{170}
        \draw[color=black] ($(c) + (\Dphi:\radius)$) node[anchor=east] {$D$};
    }
    
    \node[dot, label=above:$u$] at (u) {};
    \node[dot, label=below:$c$] at (c) {};
    \node[dot, label=above right:$w$] at (w) {};
    \draw (c) -- (u) node [midway, left] {$R$};

    \coordinate (tilde_v) at ($(\wx,{Parabola(\wx)})$);
    \node[dot, label=above right:$\tilde{v}$] at (tilde_v) {};
    \draw [dash pattern=on 3pt off 3pt] (tilde_v) -- (w);


    \draw [name path=segment] (c) -- (w);
    \path [name intersections={of = circle and segment}];
    \node[dot, label={[shift={(0.08,0.08)}]$x$}] (x) at (intersection-1) {};
    \path [name intersections={of = parabola and segment}];
    \node[dot, label=above:$v$] (v) at (intersection-1) {};
	
    \end{tikzpicture}
    \caption{}
    \label{fig:projecting_TN_to_N}
\end{figure}
\end{enumerate}

\section{Appendix: Proof of Corollary~\ref{cor:convergence_M} and Proposition~\ref{prop:convergence_UM}} \label{proof:prop:convergence_UM}

\subsection*{Proof of Corollary~\ref{cor:convergence_M}}
Also recall that $d_{\mathrm{H}}\pars{\mcal{M}\cap B\pars{u,r}, \pars{u+\mbb{T}_u\mcal{M}}\cap B\pars{u,r}} \le \frac{r^2}{2R}$ is equivalent to the conjunction of the following two statements.
\begin{enumerate}
    \item For every $v\in \mcal{M}\cap B\pars{u,r}$ there exists a $w\in \pars{u+\mbb{T}_u\mcal{M}}\cap B\pars{u,r}$ such that $\norm{v-w}\le \frac{r^2}{2R}$.
    \item For every $w\in \pars{u+\mbb{T}_u\mcal{M}}\cap B\pars{u,r}$ there exists a $v\in \mcal{M}\cap B\pars{u,r}$ such that $\norm{v-w}\le \frac{r^2}{2R}$.
\end{enumerate}
The statement now follows from Proposition~\ref{prop:nhood_properties}.

\subsection*{Proof of Proposition~\ref{prop:convergence_UM}}
Recall that $R = \operatorname{rch}\pars{\mcal{M}\cap B\pars{u,r_0}}$ and $r \le \min\braces{r_0, R}$ and define $C := \pars{2R}^{-1}$.
To prove $d_{\mathrm{H}}\pars{U\pars{\mcal{M}\cap B\pars{u,r}-u}, U\pars{\mbb{T}_u\mcal{M}}} \le 2Cr$, note that $d_{\mathrm{H}}$ is induced by a norm and is therefore absolutely homogeneous and translation invariant.
Therefore,
\begin{equation}
    d_{\mathrm{H}}\pars{U\pars{\mcal{M}\cap B\pars{u,r}-u}, U\pars{\mbb{T}_u\mcal{M}}} = \frac{1}{r} d_{\mathrm{H}}\pars{rU\pars{\mcal{M}\cap B\pars{u,r}-u}, rU\pars{\mbb{T}_u\mcal{M}}} .
\end{equation}
Now define the operator $U_r\pars{X} := rU\pars{X}$ that scales every element of a set to norm $r$.
The claim follows if $d_{\mathrm{H}}\pars{U_r\pars{\mcal{M}\cap B\pars{u,r}-u}, U_r\pars{\mbb{T}_u\mcal{M}}} \le 2Cr^2$.
To prove this we need to show that the following two statements hold.
\begin{enumerate}
    \item For every $\hat{v}\in U_r\pars{\mcal{M}\cap B\pars{u,r}-u}$ there exists a $\hat{w}\in U_r\pars{\mbb{T}_u\mcal{M}}$ such that $\norm{\hat{v}-\hat{w}}\le 2Cr^2$.
    \item For every $\hat{w}\in U_r\pars{\mbb{T}_u\mcal{M}}$ there exists a $\hat{v}\in U_r\pars{\mcal{M}\cap B\pars{u,r}-u}$ such that $\norm{\hat{v}-\hat{w}}\le 2Cr^2$.
\end{enumerate}

\paragraph{Proof of 1.}
Let $\hat{v}\in U_r\pars{\mcal{M}\cap B\pars{u,r}-u}$ and let $v\in\mcal{M}\cap B\pars{u,r}-u$ be any element that satisfies $U_r\pars{\braces{v}}=\braces{\hat{v}}$.
In the proof of Theorem~\ref{prop:nhood_properties:tangent_space_projection} we have shown that there exists a $w\in\mbb{T}_u\mcal{M}$ that satisfies $\norm{v-w}\le C\norm{v}^2$ (cf.~Equation~\eqref{eq:v-w_bound}).
We use this $w$ to define 
\begin{equation}
    \tilde{v} := \frac{r}{\norm{v}}v,
    \quad
    \tilde{w} := \frac{r}{\norm{v}}w,
    \quad\text{and}\quad
    \hat{w} = \frac{r}{\norm{w}}w
\end{equation}
and observe that $\tilde{v} = \hat{v} \in U_r\pars{\mbb{T}_u\mcal{M}}$ and that $\hat{w}\in U_r\pars{\mbb{T}_u\mcal{M}}$.
Moreover, $\norm{\hat{v}-\hat{w}} \le \norm{\hat{v}-\tilde{w}} + \norm{\tilde{w}-\hat{w}}$ and $\norm{\tilde{v}-\tilde{w}} = \frac{r}{\norm{v}}\norm{v-w} \le Cr\norm{v} \le Cr^2$.
It thus remains to show that $\norm{\tilde{w}-\hat{w}} \le Cr^2$.

To see this we consider the intersection of $\mcal{M}-u$ and $\mbb{T}_u\mcal{M}$ with the plane $\inner{0,v,w}$.
This is illustrated in Figure~\ref{fig:unitization}.
Since all the points that we have defined so far reside in this plane, the distances between them are preserved and we can henceforth consider only this two-dimensional problem.

To show $a := \norm{\tilde{w}-\hat{w}} \le \norm{\tilde{w}-\tilde{v}} =: b$, we consider the triangle $\Delta\pars{\tilde{v},\tilde{w},0}$ and employ the Pythagorean theorem
\begin{equation}
    r^2 = \pars{r-a}^2 + b^2 .
\end{equation}
Expanding the product and rearranging the terms results in the equation $b^2 = 2ra - a^2$.
Since $r\ge a$ also $2ra \ge 2a^2$.
Therefore, $b^2 \ge 2a^2 - a^2 = a^2$ which is what we wanted to prove.

\paragraph{Proof of 2.}
Let $\hat{w}\in U_r\pars{\mbb{T}_u\mcal{M}}$.
Since $r\le R$, Theorem~\ref{prop:nhood_properties:nhood_projection} guarantees that there exists a $v\in\mcal{M}\cap B\pars{u,r}-u$ such that $\norm{\hat{w}-v}\le Cr^2$.
Let $\hat{v} := \frac{r}{\norm{v}}v$ and observe that, by the reverse triangle inequality,
\begin{equation}
    \norm{\hat{w}} - \norm{v} \le \abs{\norm{\hat{w}} - \norm{v}} \le \norm{\hat{w}-v} \le Cr^2 .
\end{equation}
Rearranging the terms and substituting $\norm{\hat{w}} = r$ then yields $\norm{v} \ge r-Cr^2$.
It is now easy to estimate
\begin{equation}
    \norm{v-\hat{v}} = \abs*{1-\frac{r}{\norm{v}}}\norm{v} 
    = r-\norm{v} \le Cr^2 .
\end{equation}
Finally, using the triangle inequality, we obtain $\norm{\hat{w}-\hat{v}}\le \norm{\hat{w}-v} + \norm{v-\hat{v}}\le 2Cr^2$.
This concludes the proof.

\begin{figure}[ht]
    \centering
    \newcommand{\degre}{\ensuremath{^\circ}}
    \definecolor{angleColor}{rgb}{0.5,0.5,0.5}
    \definecolor{denim}{rgb}{0.08, 0.38, 0.74}
    \tikzset{
    dot/.style = {circle, fill, minimum size=#1, inner sep=0pt, outer sep=0pt},
    dot/.default = 3.5pt  
    }
    \newcommand*\xlim{7.5}
    \newcommand*\curvature{0.2}
    \newcommand*\radius{5.5}
    \newcommand*\vx{4.0}
    \pgfmathdeclarefunction{Parabola}{1}{\pgfmathparse{-(#1)^2*\curvature/2}}
    \begin{tikzpicture}[line cap=round,line join=round, scale=0.865]
    
    \clip(-\xlim,-4) rectangle (\xlim,1);  
    
    \coordinate (u) at (0,0);
	\coordinate (v) at ($(\vx,{Parabola(\vx)})$);
	\path let \p1 = (v) in  coordinate (w) at (\x1,0);
    \coordinate (hat_w) at (\radius,0);
    
    \draw [line width=1.2pt,domain=-6.5:6.5,samples=50,name path=parabola] plot (\x, {Parabola(\x)});
    {
        \def\Mx{-5.75}
        \draw[color=black] ($(\Mx,{Parabola(\Mx)})$) node[anchor=south east] {$\mcal{M}-u$};
    }
    \draw [line width=1.2pt,domain=-\xlim:\xlim,name path=line] plot (\x, {0*\x});
    {
        \def\TuMx{-6.5}
        \draw[color=black] (\TuMx,0) node[anchor=south] {$\mbb{T}_u\mcal{M}$};
    }
    \draw [name path=circle] (u) circle (\radius);
    
	\path [name path=ray] (u) -- ($(u)!2*\radius!(v)$);  
    \path [name intersections={of = ray and circle}];
	\coordinate (hat_v) at (intersection-1);
    \path let \p1 = (hat_v) in coordinate (tilde_w) at (\x1,0);
    
    \node[above] at ({-\radius/2},0) {$r$};
    \node[dot, label=above:$0$] at (u) {};
    \node[dot, label=below:$v$] at (v) {};
    \node[dot, label=above:$w$] at (w) {};
    \node[dot, label=above right:$\hat{w}$] at (hat_w) {};
    \node[dot, label={right:$\tilde{v}=\hat{v}$}] at (hat_v) {};
    \node[dot, label={above:$\tilde{w}$}] at (tilde_w) {};
    
    \draw (v) -- (w);
    \draw [dash pattern=on 3pt off 3pt] (u) -- (hat_v) -- (tilde_w);
    
    \end{tikzpicture}
    \caption{}
    \label{fig:unitization}
\end{figure}

\end{document}